\newtheorem{theorem}{Theorem}
\newtheorem{proposition}{Proposition}
\newtheorem{corollary}{Corollary}
\newtheorem{lemma}{Lemma}
\newtheorem{remark}{Remark}
\newtheorem{definition}{Definition}
\theoremstyle{remark}
\newtheorem{example}{Example}
\newcommand{\Id}{\text{\rm Id}}
\newcommand{\spn}{\text{\rm span}\,}
\newcommand{\GL}{\text{\rm GL}}
\newcommand{\Orth}{\text{\rm O}}
\newcommand{\Iso}{\text{\rm Iso}}
\newcommand{\End}{\text{\rm End}}
\newcommand{\Adm}{\text{\rm Adm}}
\newcommand{\Cl}{\text{\rm Cl}}
\newcommand{\U}{\text{\rm U}}
\newcommand{\Gr}{\text{\rm Gr}}
\newcommand{\Sp}{\text{\rm Sp}}
\newcommand{\Pin}{\text{\rm Pin}}
\newcommand{\diam}{\text{\rm diam}}
\newcommand{\Aut}{\text{\rm Aut}}
\newcommand{\g}{\mathfrak{g}}
\newcommand{\h}{\mathfrak{h}}
\newcommand{\R}{\mathbb{R}}
\newcommand{\K}{\mathbb{K}}
\newcommand{\CC}{\mathbb{C}}
\newcommand{\Q}{\mathbb{Q}}
\newcommand{\mc}{\mathcal }
\newcommand{\M}{\mathbb{M}}
\newcommand{\G}{\mathbb{G}}
\newcommand{\HH}{\mathbb{H}}
\newcommand{\graph}[1]{\mathrm{graph}\,(#1)}
\newcommand{\norm}[1]{\left\Vert{#1}\right\Vert}
\newcommand{\res}{\mathop{\hbox{\vrule height 7pt width .5pt depth 0pt \vrule height .5pt width 6pt depth 0pt}}\nolimits}
\begin{document}
\title[Modules]{Exceptional families of measures on Carnot groups}
\author[B.~Franchi, I.~Markina]{Bruno Franchi and Irina Markina}

\thanks{The work of the second author was partially supported by the project Pure Mathematics in Norway, funded by Trond Mohn Foundation and Troms\o\ Research Foundation.}
 
\subjclass[2010]{Primary 28A78, 53C17; Secondary 31B15, 22E30} 
\keywords{Nilpotent Lie group, module of families of measures, Hausdorff measure, intrinsic Lipschitz graph}

\address{Bruno Franchi. Department of Mathematics, University of Bologna,
Piazza di Porta S. Donato, 5
    40126 Bologna, Italy}
\email{bruno.franchi@unibo.it}

\address{Irina Markina. Department of Mathematics, University of Bergen, P.O.~Box 7803,
Bergen N-5020, Norway}
\email{irina.markina@uib.no}

\begin{abstract}
We study the families of measures on Carnot groups that have vanishing $p$-module, which we call $p$-exceptional families. We found necessary and sufficient condition for the family of intrinsic Lipschitz surfaces passing through a common point to be $p$-exceptional for $p\geq 1$. We described a wide class of $p$-exceptional intrinsic Lipschitz surfaces for $p\in(0,\infty)$. 
\end{abstract}
\maketitle


\tableofcontents


\section{Introduction and motivation}


 ``Negligible'' sets appear customarily in measure theory and stochastic theory, as the sets of measure zero, as 
well as the sets of vanishing $p$-capacity when dealing
with regularity issues for solutions of PDE, and as thin and polar sets in potential theory.
Sets of a family of measures having the so-called
$p$-module zero belong to this category of negligible or exceptional subsets of families of measures, see definitions in Section~\ref{sec:DefinitionModule}. 

The notion of a module of a family of curves or in another terminology extremal length originated in the theory of complex analytic functions as a conformal invariant~\cite{MR36841}, and later was widely used for the quasiconformal analysis and extremal problems of functional spaces~\cite{MR1677985, MR2788359, MR2068961,MR1238941,  MR440031,MR0120346}. B.~Fuglede in his seminal paper~\cite{MR97720} proposed to extend the notion of the module from families of curves to families of measures. He characterized the completion, with respect to $L^p$ norm, of some functional classes by using a family of surfaces having vanishing $p$-module. He also described some classes of systems of measures with vanishing module and related it to the potential theory. In spite of the fact that the definition of the module of a family of measures is given for an arbitrary measure space, most of the applications in~\cite{MR97720} were done for $\mathbb R^n$. 

The development of the analysis on metric measure spaces inspired us to look for examples of interesting systems of measures in a more general setting 
then the Euclidean space. We are not the first ones, just to name~\cite{MR3076803, MR1936925,MR2097161,MR1809341,MR1879250}. However, most of the preceding works were dealing with families of curves. Our main interest focuses on
families of (suitably defined) intrinsic surfaces on Carnot groups~\cite{MR2032504,MR3511465,MR3060706}. 

Carnot groups
are connected, simply connected, nilpotent Lie groups and are one of the most popular examples of metric measure spaces. 
Being endowed with a rich structure of translations and dilations, makes the Carnot groups
akin to Euclidean spaces.  
%
%
%
%
Euclidean spaces are commutative Carnot
groups, and, more precisely, the only commutative Carnot
groups.
The simplest but, at the same time,  non-trivial
instance of  non-abelian Carnot groups is provided by Heisenberg groups $H^n$.  

Carnot groups possess an intrinsic
metric, the so-called Carnot-Carath\'eodory metric ($cc$-distance), see for instance, \cite{MR2363343, MR1984849, MR1421823}.
It is also well known that non commutative Carnot groups, endowed with the
$cc$-distance, are not Riemannian manifolds because the
$cc$-distance makes them not locally Lipschitz equivalent to
Riemannian at any scale~\cite{Semmes}. The Carnot groups are particular instances of the so-called sub-Riemannian manifolds.

Though Carnot groups are analytic manifolds, the study of measures supported
on submanifolds (for instance the Hausdorff measures associated with
their $cc$-distance) cannot be reduced to the well established theory for submanifolds
of Euclidean spaces, since it has been clear for a long time that considering Euclidean 
regular submanifolds, even in Heisenberg groups, may be both too general and too 
restrictive, see~\cite{MR2124590} for a striking example related to the second instance. 
Through this paper, we shall rely
on the theory of intrinsic submanifolds in
Carnot groups that has been recently developed by making use of the notion of {\emph {intrinsic
graphs}}, see e.g.~\cite{FrSerSC07,MR3511465,MR2032504}. A discussion of
different alternatives leading to this notion can be found e.g. in~\cite{MR3511465},
together with the main properties of the most relevant instances, the so-called
intrinsic Lipschitz graphs.
Let us sketch this construction, restricting ourselves to stress the difficulties
arising when we want to extend the theory of $p$-modules from the Euclidean
setting to Carnot groups. For deep algebraic reasons, due to the non-commutativity of the group,
the most flexible notion of submanifold of a Carnot group is the counterpart of
the Euclidean notion of graph. However, the notion of intrinsic graph is not
a straightforward translation of the corresponding Euclidean notion, since Carnot
groups not always can be expressed as a direct product of subgroups. Because of that,
we argue as follows: an intrinsic graph inside $\G$ is associated with a decomposition of 
$\G$  as a  product $\G = \mathbb M \cdot \mathbb H$ of two homogeneous
{\emph {complementary subgroups}} $\mathbb M$, $\mathbb H$, see Section~\ref{sec:IntrinsicLipschitzsurface}. Then the intrinsic (left) graph of $f\colon \Omega\to \mathbb H$, where $\Omega$ is an open subset of $\mathbb M$, is the set
\begin{equation*}
\graph f =\{g\cdot f(g): g\in \Omega \}.
\end{equation*}
Another deep peculiarity of Carnot groups is the
poor structure of the isometry group preserving the grading structure of the their Lie algebras. 

The main results of the present work are formulated in Theorem~\ref{th:multipleDecomposition-1}, Section~\ref{sec:ExceptionalLipSurf}, where we show that quite a wide class of families of intrinsic Lipschitz surfaces, 
(sets which are locally intrinsic Lipschitz graphs of the same ``metric dimension'') has vanishing $p$-module for $p\in(0,1)$. We did not reach the full generality as in the Euclidean space due to the lack of knowledge about decompositions of an arbitrary Carnot groups into the product of two homogeneous subgroups. Another result contained in Theorem~\ref{prop:4}, Section~\ref{subsec:p1}, which is the sufficient condition for a family of surfaces passing through a common point to be $p$-exceptional. In order to find a necessary condition we construct a family of intrinsic Lipschitz graphs passing through one point by making use of the orthogonal Grassmannians on some specific 2-step Carnot groups, see Section~\ref{secOrthGrassmannians}. The construction of the orthogonal Grassmannians and the study of measures on them have an independent interest and, as to the knowledge of the authors, were not presented in the literature. Examples of exceptional families of measures that are not related to intrinsic Lipschitz graphs are contained in Examples 1 and 2 in Section~\ref{sec:Radon}.

We are trying to keep the paper as accessible as possible for a wider audience. The structure of the paper is visible from the Contents. 


\section{Carnot groups}


In the present section we establish
notation and collect the basic notions concerning Carnot groups and
their Lie algebras.


\subsection{General definition of Carnot groups}\label{sec:carnot-group}


A Carnot group $\G$ is a connected, simply connected Lie group whose Lie algebra $\mathfrak g$ of the left-invariant
vector fields is a graded stratified nilpotent  Lie algebra  of step $l$, i.e.  the Lie algebra $\mathfrak g$ satisfies:
$$
\mathfrak g=\oplus_{k=1}^{l}\mathfrak g_k,\quad
[\mathfrak g_1,\mathfrak g_{k}]=\mathfrak g_{k+1},\quad
\mathfrak g_{l+1}=\{0\}.
$$ 
We denote by $N=\sum_{k=1}^l\,\dim(\mathfrak g_k)$ the topological dimension of $\G$. The number $Q=\sum_{k=1}^l \,k\dim(\mathfrak g_k)$ is called the {\it homogeneous dimension} of the group $\G$. Since $\mathfrak g$ is nilpotent, the exponential map $\exp\colon\mathfrak g\to\G$ is a global diffeomorphism. 

One can identify the group $\G$ with $\mathbb R^N\cong \mathfrak g$ by making use of exponential coordinates of the first kind by the following procedure. We fix a basis 
\begin{equation}\label{eq:basis2ind}
X_{11},\ldots, X_{1\mathbf d_1},X_{21},\ldots,X_{2\mathbf d_2},\ldots,X_{l1},\ldots,X_{l\mathbf d_l},
\qquad \mathbf d_k=\dim(\mathfrak g_k),
\end{equation}
of the Lie algebra $\mathfrak g$ which is adapted to the stratification.
If $g\in \G$ and $V\in \mathfrak g$ are such that 
$$
g=\exp (V)=\exp \Big(\sum_{k=1}^l\sum_{j=1}^{\mathbf d_k}x_{kj}X_{kj}\Big),
$$ 
then (with a slight abuse of notations) we associate with the point $g\in \G$ a point $x\in\mathbb R^N$ having the following coordinates 
\begin{equation}\label{eq:coordinates}
g=(x_{11},\ldots, x_{1\mathbf d_1},x_{21},\ldots,x_{2\mathbf d_2},\ldots,x_{l1},\ldots,x_{l\mathbf d_l})=x.
\end{equation}
Thus, the identity $e\in\G$ is identified with the origin in $\mathbb R^N$ and the inverse $g^{-1}$ 
with $-x$.

The stratification $\mathfrak g=\oplus_{k=1}^{k=l}\mathfrak g_k$ of $\mathfrak g$ induces the one-parameter family 
$ \{\delta_\lambda \}_{\lambda>0}$ of automorphisms of $\mathfrak g$, where each $\delta_\lambda\colon \mathfrak g \to \mathfrak g$ is defined as 
\[
\delta_\lambda(X):=\lambda^k X,\quad \text{for all}\quad X\in \mathfrak g_k\quad \text{and}\quad \lambda>0.
\]
The exponential map allows to transfer these \emph{automorphisms  of $\mathfrak g$} to a family of \emph{automorphisms of the Lie group $\G$}: $\delta_\lambda^{\G}: \G \to \G$, so-called intrinsic dilations, defined  as
\[
\delta_\lambda^{\G}:=\mathrm{exp} \circ \delta_\lambda \circ \mathrm{exp}^{-1},\quad \text{for all}\quad \lambda>0.
\]
We keep  denoting by $\delta_\lambda: \G \to \G$ the intrinsic dilations, if no confusion arises.

In exponential coordinates, the group automorphism $\delta_{\lambda}\colon \G\to \G$ for $\lambda>0$ is written as
\begin{eqnarray}\label{eq:delta}
\delta_{\lambda}g&=&\delta_{\lambda}(x_{11},\ldots, x_{1\mathbf d_1},x_{21},\ldots,x_{2\mathbf d_2},\ldots,x_{l1},\ldots,x_{l\mathbf d_l})\nonumber
\\
&=&
(\lambda x_{11},\ldots, \lambda x_{1\mathbf d_1},\lambda^2x_{21},\ldots,\lambda^2 x_{2\mathbf d_2},\ldots,\lambda^lx_{l1},\ldots,\lambda^lx_{l\mathbf d_l}).
\end{eqnarray}


The group product on $\G$ written in coordinates~\eqref{eq:coordinates} has the form
\begin{equation}\label{legge di gruppo1}
x\cdot y=x+y+\mathcal Q(x,y),\quad
\text{for all}\quad x,y\in\mathbb R^N, 
\end{equation}
where $\mathcal Q=(\mathcal Q_1,\dots,\mathcal
Q_N):\mathbb R^N\times\mathbb R^N\to\mathbb R^N$ and each $\mathcal Q_k$ is a homogeneous
polynomial with respect to group dilations, see, for instance~\cite[Propositions 2.1 and 2.2]{MR1984849}.
 When the grading structure is not important, we will use the one-index notation
$$
X_1,\ldots,X_N,\qquad x_1,\ldots,x_N
$$
for the basis~\eqref{eq:basis2ind} of the Lie algebra $\g$ and for the coordinates on $\G$. The basis vectors of the Lie algebra $\g$ viewed as  left invariant vector fields $X_j$, $j=1,\ldots,N$ on $\G$ have polynomial coefficients and take the
form in the coordinate frame:
\begin{equation}\label{campi omogenei}
X_j=\partial_j+\sum_{i>j}^N q_{i,j}(x)\partial_i, \quad
\text{for}\; j=1,\dots,N,
\end{equation} where
$q_{i,j}(x)=\frac{\partial \mc Q_i}{\partial y_j}(x,y){|_{y=0}}$.
The vector fields $X_j$, $j=1,\ldots,\mathbf d_1$ are called horizontal and they are homogeneous of
degree $1$ with respect to the group dilation. 
Their span at  $q\in \G$ is called the horizontal vector space $H_q\G\subset T_q\G$. 


\subsubsection{Distance functions}\label{sec:Htypegroups}



In the present paper we will use the following distance functions on a Carnot
group $\G$ identified with $\mathbb R^N$ through exponential coordinates. 

\begin{itemize}
\item[($D_1$)]{The standard  Euclidean distance $d_{E}$ associated with
the Euclidean norm $|x|_{E}$: $d_E(x,y)=\sqrt{\sum_{i=1}^N(x_i-y_i)^2}$.}
\end{itemize}
However, such a distance is neither left-invariant under group translations, nor
1-homogeneous with respect to group dilations. Thus, let us introduce
further (not Lipschitz equivalent to $d_E$) distances enjoying these properties.

\begin{definition}\label{norminvar} Let $\G$ be a Carnot group.  A \emph{homogeneous norm} $\| \cdot\|$ is a continuous  function $\| \cdot\| :\,\G\to [0,+\infty)$  such that
\begin{equation}
\begin{split}\label{norm1}
&\| p\| =0\quad\text{if and only if}\quad p=0\,;\\
&\|p^{-1}\| =\| p\|,\quad \|\delta_{\lambda}(p)\| =\lambda \|p\|\quad \text{for all} \quad p\in\G \quad \text{and} \quad \lambda>0;\\
&\| {p\cdot q}\|\le\,\| p\| +\,\| q\|\qquad\text{for all}\quad p,q\in\G. 
\end{split}
\end{equation}
\end{definition}

\begin{remark}
A homogeneous norm $\| \cdot\|$ induces a  homogeneous left invariant distance in $\G$ as follows:
\begin{equation}\label{dist01}
d(p,q):=
d(q^{-1}\cdot p,0):=\|q^{-1}\cdot p\| \quad \text{for all} \quad p,q\in \G.
\end{equation}
\end{remark}

\begin{itemize}
\item[($D_2$)] {On any Carnot group $\G$
there exists a homogeneous norm $\|\cdot\|_{\G}$ that is smooth away of the origin and induces a distance $d_{\G}(x,y):=\|y^{-1}\cdot x\|_{\G}$ on $\G$, see~\cite[Page 638]{MR1232192}. 
}
\item[($D_3$)]{We also use the homogeneous norm $\|\cdot\|_{H}$:
$$
\|x\|_{H}=\max\{ \epsilon_1 \|\mathbf x_1\|_{E}, \epsilon_2 \|\mathbf x_2\|^{1/2}_{E},\ldots, \epsilon_l\|\mathbf x_l\|^{1/l}_E\},
$$
where $\mathbf x_k=(x_{k1},\ldots,x_{k\mathbf d_k})\in\g_k$, $\|\mathbf \cdot \|_E$ is the Euclidean norm, making the adapted basis~\eqref{eq:basis2ind} orthonormal. The suitable constants
$\epsilon_1,\dots,\epsilon_l$ are positive, see~\cite[Theorem 5.1]{MR1984849}. The 
induced distance is $d_{H}(x,y):=\|y^{-1}\cdot x\|_{H}$;
}
\item[($D_4$)]{The Carnot-Carath\'eodory distance $d_{cc}(x,y)$ which is induced by the Euclidean scalar product $\langle .\,,.\rangle_{\g_1}$ on $\g_1$, making the horizontal vector fields $X_j$, $j=1,\ldots,\mathbf d_1$ orthonormal~\cite{MR3971262, MR1421823}.}
\end{itemize}

%

The distances defined in $(D_2)-(D_4)$ are Lipschitz equivalent, since they are invariant under the left translation on $\G$ and are homogeneous functions 
of degree 1 with respect to dilation~\eqref{eq:delta}. We denote by $d_{\rho}$ any of the distances mentioned in $(D_2)-(D_4)$. Then the above observation and~\cite[Corollaries 5.15.1 and 5.15.2]{MR2363343}  imply:
\begin{proposition}\label{compareuclidtop} Let $\G$ be a   Carnot group of step $l$. Then 
\begin{itemize}
\item[(i)]   a set $A\subset \G$ is $d_{\rho}$-bounded if and only if it is $d_E$-bounded;
\item[(ii)]  
for any bounded set $A\subset\G$  there is $C_A>0$ such that
\[
C_A^{-1}\,d_E(x,y)\le\,d_{\rho}(x,y)\le\,C_A \,d_E(x,y)^{1/l}
\]
for all $x,y\in A$;
\item[(iii)] the topologies induced by $d_{\rho}$ and $d_E$ coincide.
\end{itemize}
\end{proposition}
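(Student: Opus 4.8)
The plan is to fix a bounded set $A$ and reduce everything to the explicit box norm $\|\cdot\|_H$ of $(D_3)$; this is legitimate because $d_H$ is Lipschitz equivalent to every $d_\rho$. Write $g=(\mathbf g_1,\dots,\mathbf g_l)$ with $\mathbf g_k\in\g_k$, so $\|g\|_H=\max_k\epsilon_k\|\mathbf g_k\|_E^{1/k}$. The heart of the matter is a two-sided comparison between $\|g\|_H$ and the Euclidean norm $\|g\|_E$, valid on each sublevel set $\{\|g\|_H\le R\}$ and each Euclidean ball $\{\|g\|_E\le R\}$ respectively. Starting from the elementary bounds $\|\mathbf g_k\|_E\le\epsilon_k^{-k}\|g\|_H^{\,k}$ and $\|\mathbf g_k\|_E\le\|g\|_E$, one obtains
\begin{equation*}
c_1(R)\,\|g\|_E\ \le\ \|g\|_H\ \le\ c_2(R)\,\|g\|_E^{1/l},
\end{equation*}
where the left inequality uses $\|g\|_H^{\,k}\le R^{k-1}\|g\|_H$ followed by summation over $k$, and the right one uses $t^{1/k}\le R^{1/k-1/l}t^{1/l}$ for $0\le t\le R$ and $1\le k\le l$. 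The exponent $1/l$ rather than $1$ is forced by the anisotropy of the dilations, and is exactly why the two families of distances are not Lipschitz equivalent.

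For statement (i) I would combine the left invariance of $d_H$ with the fact that the group law $(x,y)\mapsto x\cdot y$, hence every left translation and every inversion, is a polynomial map and so carries Euclidean-bounded sets to Euclidean-bounded sets. If $A$ is $d_H$-bounded, pick $a_0\in A$; then $a_0^{-1}\cdot A\subset\{\|g\|_H\le\diam_H(A)\}$ by left invariance, the left norm comparison makes $a_0^{-1}\cdot A$ Euclidean-bounded, and translating back by the polynomial map $x\mapsto a_0\cdot x$ shows $A$ is Euclidean-bounded. Conversely, a $d_E$-bounded $A$ lies in some $\{\|g\|_E\le R\}$, the right comparison bounds $\|x\|_H$ uniformly on $A$, and the norm axioms give $d_H(x,y)\le\|x\|_H+\|y\|_H$, so $A$ is $d_H$-bounded. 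Lipschitz equivalence of $d_H$ with $d_\rho$ then yields (i).

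For statement (ii), write $d_\rho(x,y)=\|y^{-1}\cdot x\|_\rho$ and set $z:=y^{-1}\cdot x$; I first pass from $\|z\|_\rho$ to $\|z\|_H$ by Lipschitz equivalence and then to $\|z\|_E$ by the comparison above. It remains to compare $\|z\|_E=\|y^{-1}\cdot x\|_E$ with $d_E(x,y)=\|x-y\|_E$. Both $z=y^{-1}\cdot x$ and $x-y=(y\cdot z)-y$ are polynomial in their arguments and vanish respectively when $x=y$ and when $z=0$; since polynomials are $C^1$ with bounded derivatives on compacta, the mean value inequality gives, for $x,y\in A$,
\begin{equation*}
L_A^{-1}\,\|x-y\|_E\ \le\ \|y^{-1}\cdot x\|_E\ \le\ L_A\,\|x-y\|_E .
\end{equation*}
Chaining the three comparisons (Lipschitz equivalence, box-versus-Euclidean, and this polynomial estimate) produces $C_A^{-1}d_E(x,y)\le d_\rho(x,y)\le C_A\,d_E(x,y)^{1/l}$ on $A$, with the $1/l$ surviving from the middle step. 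Statement (iii) is then a formal consequence of (ii): continuity of a metric-space identity map is local, so around any point I restrict to a bounded neighborhood where (ii) shows the identity $(\G,d_E)\to(\G,d_\rho)$ and its inverse are (Hölder, hence) continuous, whence the two metrics define the same open sets.

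The step I expect to be the main obstacle is the passage from the left-invariant norm $\|y^{-1}\cdot x\|$ to the non-invariant Euclidean distance $\|x-y\|_E$, that is, controlling how far the group ``difference'' $y^{-1}\cdot x$ is from the linear difference $x-y$. Everything else is either a direct computation with the explicit formula for $\|\cdot\|_H$ or an appeal to the already-stated Lipschitz equivalence; the genuine input is that, because the group operations are polynomial, these two notions of difference are comparable on each bounded set, with constants that necessarily degenerate as $A$ grows.
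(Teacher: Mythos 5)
Your proof is correct and complete. The paper itself offers no argument for this proposition: it merely observes that the distances $(D_2)$--$(D_4)$ are mutually Lipschitz equivalent and cites Corollaries 5.15.1 and 5.15.2 of the reference \cite{MR2363343}, so there is no in-paper proof to compare against. Your reconstruction is the standard one from that source: reduce to the explicit box norm $\|\cdot\|_H$, prove the two-sided comparison $c_1(R)\|g\|_E\le\|g\|_H\le c_2(R)\|g\|_E^{1/l}$ on bounded sets, and control the discrepancy between the group difference $y^{-1}\cdot x$ and the linear difference $x-y$ via the polynomial form \eqref{legge di gruppo1} of the group law and the mean value inequality on compacta. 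The one point requiring (minor) care --- that for $x,y\in A$ the element $z=y^{-1}\cdot x$ lies simultaneously in a Euclidean-bounded and an $\|\cdot\|_H$-bounded set, so that both halves of the norm comparison apply --- is handled correctly by your chaining of the two inequalities, and the derivation of (iii) from the local two-sided H\"older/Lipschitz bounds in (ii) is sound.
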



\subsubsection{Measures on the Carnot groups}\label{sec:Htypegroups}


The pushforward of $N$-dimensional Lebesgue measure $\mathcal L^N$ on $\mathbb R^n$ to the group $\G$ under the exponential map is the Haar
measure $\mathbf g_{\G}$ on the group $\G$. Hence if $E\subset\mathbb R^N\cong \G$ is measurable,
then $ \mathcal L^N(x\cdot E)=\mathcal L^N(E\cdot x)= \mathcal L^N(E)$ for all $x\in\G$.
Moreover, if $\lambda>0$ then $\mathcal
L^N(\delta_\lambda(E))=\lambda^Q \mathcal L^N(E)$.

Recall the definition of the Hausdorff measure in a metric space $(X,\rho)$.
For all sets $E, E_i\subseteq X$, closed balls $B_{\rho}(x_i, r_i)$, real numbers $m\in [0,\infty)$, and $\delta >0$ one writes
\[
\begin{aligned}
\mathcal H_{\rho,\delta}^m(E)&:=\inf \Big\{\sum_i\diam(E_i)^m:\ E\subset \cup_i E_i,\,   \diam(E_i)\leq \delta \Big\},
\\
\mathcal S_{\rho,\delta}^m(E)&:=\inf \Big\{\sum_i \diam(E_i)^m:\  E\subset \cup_i B_{\rho}(x_i, r_i), \diam(B_{\rho}(x_i, r_i))\leq \delta  \Big\},\\
\end{aligned}
\]
where we assume $\diam(E_i)^0=1$ for $E_i\neq \emptyset$, and $\mathcal H_{\rho,\delta}^m(\emptyset)=\mathcal S_{\rho,\delta}^m(\emptyset)=0$. 
 For all $E\subseteq X$ and $m\in [0,\infty)$ the \emph{$m$-Hausdorff measure} $\mathcal H_\rho^m(E)$ and the \emph{spherical $m$-Hausdorff measure} $\mathcal S_{\rho}^m(E)$  are defined respectively
 as
 \[
 \mathcal H_\rho^m(E)=\lim_{\delta\to 0}\mathcal H_{\rho,\delta}^m(E),\qquad
 \mathcal S_\rho^m(E)=\lim_{\delta\to 0}\mathcal S_{\rho,\delta}^m(E).
\]
Both  $\mathcal H_\rho^m$ and $ \mathcal S_\rho^m$ are Borel regular measures.

When $\G$ is a Carnot group considered as a metric space $(\G,d_\rho)$, where $d_\rho$ is one of the distances $(D_1)-(D_4)$, we denote by  $\mathcal H_{d_\rho}^{\bf d_m}$ and $ \mathcal S_{d_\rho}^{\bf d_m}$ the ${\bf d_m}$-dimensional Hausdorff and spherical Hausdorff measures associated with the distance $d_\rho$, respectively.
The measures $\mathcal H_{d_\rho}^{\bf d_m}$ and $\mathcal S_{d_{\rho'}}^{\bf d_m}$, where $d_\rho$ and $d_{\rho'}$ are the distance functions of types $(D_2)-(D_4)$, satisfy
$$
c \mathcal H_{d_\rho}^{\bf d_m}(E)\leq \mathcal H_{d_{\rho'}}^{\bf d_m}(E)\leq C\mathcal H_{d_\rho}^{\bf d_m}(E),
\qquad
k \mathcal H_{d_\rho}^{\bf d_m}(E)\leq \mathcal S_{d_{\rho'}}^{\bf d_m}(E)\leq K\mathcal H_{d_\rho}^{\bf d_m}(E),
$$
for some positive constants $c,k,C,K$ and a set $E\subset \mathbb G$. The same is true if we interchange $\mathcal H_{d_\rho}^{\bf d_m}$ and $\mathcal S_{d_\rho}^{\bf d_m}$.
Finally
\begin{equation}\label{eq:QNequivalence}
\tilde c \mathcal H_{d_\rho}^Q(E)\leq \mathcal L^N(E)\leq \tilde C\mathcal H_{d_\rho}^Q(E),\quad E\subset \mathbb G,
\end{equation}
for some positive constants $\tilde c,\tilde C$, the homogeneous dimension $Q$, and the topological dimension $N$ of the Carnot group.

\subsubsection{$H$-type Lie groups}\label{sec:Htypegroups}


One of the core examples for the present paper will be $H$-type Lie groups, that are particular examples of 2-step Carnot groups.
Consider a real Lie algebra $(\g,[.\,,.],\langle.\,,.\rangle_{\mathbb R})$ with the underlying vector space $\g=\g_1\oplus \g_2$, where the decomposition is orthogonal with respect to the inner product $\langle.\,,.\rangle_{\mathbb R}$, and $\g_2$ is the center of the Lie algebra $\g$. The inner product space $(\g_2, \langle.\,,.\rangle_{\mathbb R})$, where $\langle.\,,.\rangle_{\mathbb R}$ is the restriction of the scalar product to the subspace $\g_2\subset \g$, generates the Clifford algebra $\Cl(\g_2, \langle.\,,.\rangle_{\mathbb R})$. The Clifford algebra $\Cl(\g_2, \langle.\,,.\rangle_{\mathbb R})$ admits a representation on the vector space $\g_1$:
$$
J\colon \Cl(\g_2, \langle.\,,.\rangle_{\mathbb R})\to\End(\g_1).
$$
We use the notation $J_z$, $z\in\g_2$, for the value of the map $J$ restricted to the vector space $\g_2\subset \Cl(\g_2, \langle.\,,.\rangle_{\mathbb R})$. 
From the definition of the Clifford algebra we have 
\begin{equation}\label{eq:J2}
J_z^2=-\langle z,z\rangle_{\mathbb R}\Id_{\g_1},\quad z\in\g_2.
\end{equation}

The Lie algebra $(\g,[.\,,.],\langle.\,,.\rangle_{\mathbb R})$ is called of $H$-type if 
\begin{equation}\label{eq:commutator}
\langle J_z u,v\rangle_{\mathbb R}=\langle z,[u,v]\rangle_{\mathbb R}, \quad z\in \g_2,\ \ u,v\in\g_1.
\end{equation}
An $H$-type Lie group is a connected simply connected Lie group whose Lie algebra is an $H$-type Lie algebra $(\g,[.\,,.],\langle.\,,.\rangle_{\mathbb R})$. 


\subsubsection{The Heisenberg group}\label{sec:Heis-R}


The $n$-th Heisenberg group $H^n$ is diffeomorphic to $\mathbb R^{2n+1}$ and is the simplest example of $H$-type Lie group. Its
$(2n+1)$-dimensional Lie algebra $\h^n_{\R}$ has one dimensional center $\h_2$. Let $\h_1$ be the $2n$-dimensional orthogonal complement to $\h_2$ with respect to an inner product $\langle.\,,.\rangle$ on $\h^n_{\R}$.
We choose an orthonormal basis 
\begin{equation}\label{eq:HeisFirstTime}
X_1,\ldots, X_n, Y_1,\ldots, Y_n\ \ \text{for}\ \ \h_1\ \ \text{and}\ \ \epsilon\ \ \text{for}\ \ \h_2,
\end{equation}
satisfying the commutation relations
\begin{equation}\label{eq:HeisCommFirstTime}
[X_j,Y_i]=\delta_{ji}\epsilon,\quad  [X_j,X_i] =[Y_j,Y_i]=0.
\end{equation}
Then the map $J_{\epsilon}$ defined in~\eqref{eq:commutator} satisfies
$$
J_{\epsilon}^2=-\Id_{\h_1},\quad J_{\epsilon}(X_i)=Y_i,\quad J_{\epsilon}(Y_i)=-X_i.
$$

\section{Module of a family of measures}\label{sec:DefinitionModule}


We start from the explaining the notion of a $p$-module of a system of measures, that B.~Fuglede introduced 
in his celebrated paper~\cite{MR97720}. 
Let $(X,\mathfrak{M}, m)$ be an abstract measure space with a fixed basic measure 
$m\colon\mathfrak{M}\to[0,+\infty]$ defined on a $\sigma$-algebra $\mathfrak{M}$ of subsets of $X$. We denote by $\mathbf{M}$ the system of all measures on $X$, whose 
domains of definition contain $\mathfrak{M}$. 

With an arbitrary subset $\bf E$ of the system of measures $\mathbf{M}$ we associate a class of functions that we call {\it admissible} for $\bf E$ and denote by $\Adm(\bf E)$. Namely,
\begin{eqnarray*}
\Adm({\bf E})
&=&
\Big\{f\colon X\to \mathbb R:\ f\ \text{is}\ m-\text{measurable},\ \ f\geq 0,\ \text{and}
\\  
&&\int_{X}f\,d\mu\geq 1,\ \text{for all}\  \mu\in {\bf E}\Big\}.
\end{eqnarray*}

\begin{definition}
 For $0<p<\infty$, the module $M_p({\bf E})$ of a system of measures ${\bf E}$ is defined as
\[
M_p({\bf E})=\inf\limits_{f\in \Adm({\bf E})}\int_{X}f^p\,dm,
\]
interpreted as $+\infty$ if $\Adm({\bf E})=\emptyset$.
\end{definition}

The reader can find the fundamental properties of the $p$-module of measures in~\cite[Chapter 1]{MR97720}.

A system of measures ${\bf E}\subset\mathbf M$ can be associated with the set where the measures are supported~\cite{MR2068961,MR0454009}. 

(I)\ Consider, for instance, {\it a family of rectifiable curves} $\Gamma=\{\gamma\colon[a_{\gamma},b_{\gamma}]\to \mathbb R^n\}$ and the associated  system of measures 
$$
{\mathbf E}=\{{\rm Var}\Big(\frac{d\gamma}{dt}\Big)=\mathcal H^1_{d_E}(|\gamma|):\ \ \gamma\in \Gamma\}.
$$
Here ${\rm Var}\Big(\frac{d\gamma}{dt}\Big)$ is the total variation of the vector valued Radon measure $\frac{d\gamma}{dt}$, which coincides with the Hausdorff measure $\mathcal H^1_{d_E}(|\gamma|)$ of the locus $|\gamma|$ of the curve $\gamma\in \Gamma$.

If we consider a subfamily $\widetilde \Gamma=\{\tilde\gamma\colon[\tilde a_{\gamma},\tilde b_{\gamma}]\to \mathbb R^n\}\subset \Gamma$ of absolutely continuous curves, then the corresponding measures can be calculated by 
\begin{equation*}\label{eq:acc}
{\rm Var}\Big(\frac{d\tilde\gamma}{dt}\Big)
=\int_{a_{\tilde \gamma}}^{b_{\tilde \gamma}}\Big|\frac{d\tilde \gamma(t)}{dt}\Big|\,dt
=\int_{|\tilde \gamma|}d\mathcal H^1_{d_E},\quad \tilde \gamma\in \widetilde \Gamma.
\end{equation*} 

(II)\ A {\it family of locally Lipschitz $k$-dimensional surfaces} in $\mathbb R^n$. Each surface locally is the image of an open set of $\mathbb R^k$ under a Lipschitz map $f$. In this case a surface measure locally coincides with $d\sigma=|J(f,t)|dt$. Here $J(f,t)$ is the Jacobian of $f$ for the points $t\in\mathbb R^k$, where the Jacobian $J(f,t)$ is defined. The corresponding family of measures is supported on these Lipschitz $k$-dimensional surfaces.

(III)\ A {\it family of countable $\mathcal H^k_{d_E}$-rectifiable subsets} in $\mathbb R^n$, where we understand the rectifiability in the sense of~\cite{MR0257325}. The $k$-dimensional Hausdorff measures $\mathcal H^k_{d_E}$, is the system of measures, associated with the family of countable $\mathcal H^k_{d_E}$-rectifiable sets.


\subsection{Exceptional families of measures}


A system $\mathbf E_0\subset \mathbf M$ is called $p$-exceptional, if $M_p(\mathbf E_0)=0$.
A statement concerning measures $\mu\in\mathbf M$ is said to hold $M_p$-almost everywhere if it fails to hold for a $p$-exceptional system $\mathbf E_0$. The question that we are interested in is to study $p$-exceptional sets of measures on  Carnot groups. Let us remind that a point-set $\mathbf E_0\subset X$ in a measure space $(X,m)$ has vanishing measure: $m(\mathbf E_0)=0$ if and only if there is a function $f\in L^p(X,m)$ such that $f(x)=+\infty$ for all $x\in \mathbf E_0$. A generalisation of this fact to a system of measures $\mathbf E_0\subset\mathbf M$ is given by B.~Fuglede.

\begin{theorem}\cite[Theorem 2]{MR97720}\label{th:excep}
A system of measures ${\bf E}_0\subset\mathbf M$ is $p$-excep-tional if and only if there exists a non-negative function $f\in L^p(X,m)$ such that 
$$
\int_Xf\,d\mu=+\infty\quad\text{for every}\quad\mu\in {\bf E}_0.
$$
\end{theorem}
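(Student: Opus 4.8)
The plan is to prove the two implications separately, treating the easier sufficiency first. Suppose there is a non-negative $f\in L^p(X,m)$ with $\int_X f\,d\mu=+\infty$ for every $\mu\in\mathbf{E}_0$. I would argue by scaling: for each $\varepsilon>0$ the function $\varepsilon f$ is admissible for $\mathbf{E}_0$, since $\int_X \varepsilon f\,d\mu=+\infty\ge 1$, so $\varepsilon f\in\Adm(\mathbf{E}_0)$. Therefore $M_p(\mathbf{E}_0)\le \int_X (\varepsilon f)^p\,dm=\varepsilon^p\|f\|_{L^p}^p$, and letting $\varepsilon\to 0^+$ gives $M_p(\mathbf{E}_0)=0$, i.e.\ $\mathbf{E}_0$ is $p$-exceptional.

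For the necessity direction, assume $M_p(\mathbf{E}_0)=0$. Since the module is an infimum over $\Adm(\mathbf{E}_0)$, for each $n\in\N$ I can select $f_n\in\Adm(\mathbf{E}_0)$ with $\int_X f_n^p\,dm\le 2^{-n}$. I would then propose the candidate $f:=\sum_{n=1}^{\infty} f_n$, which is a non-negative $m$-measurable function with values in $[0,+\infty]$. It remains to check the two required properties: that $f$ integrates to $+\infty$ against every measure in $\mathbf{E}_0$, and that $f\in L^p(X,m)$.

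The first property is the quick one. Applying monotone convergence to the increasing partial sums $s_N=\sum_{n=1}^N f_n$, and using admissibility $\int_X f_n\,d\mu\ge 1$ at each step, I obtain for every $\mu\in\mathbf{E}_0$ that $\int_X f\,d\mu=\sum_{n=1}^{\infty}\int_X f_n\,d\mu\ge\sum_{n=1}^{\infty}1=+\infty$; note that the monotone convergence theorem is available for the arbitrary measure $\mu$, so no regularity of $\mu$ is needed here.

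The step I expect to be the main obstacle is verifying $f\in L^p(X,m)$, precisely because the two ranges $p\ge 1$ and $0<p<1$ must be handled by different tools. For $p\ge 1$, Minkowski's inequality applies and yields $\|f\|_{L^p}\le\sum_{n}\|f_n\|_{L^p}\le\sum_{n}2^{-n/p}<\infty$, since $2^{-1/p}<1$ gives a convergent geometric series. For $0<p<1$ Minkowski fails, so instead I would invoke the elementary subadditivity inequality $\bigl(\sum_n a_n\bigr)^p\le\sum_n a_n^p$, valid for non-negative reals when $0<p\le 1$; combined with monotone convergence this gives $\int_X f^p\,dm\le\sum_{n}\int_X f_n^p\,dm\le\sum_{n}2^{-n}=1<\infty$. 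In either regime $f\in L^p(X,m)$, which completes the construction and hence the proof. The argument is essentially a Borel--Cantelli / summability scheme of the same flavour as the proof of completeness of $L^p$, the only genuinely new point being the bifurcation at $p=1$ forced by the loss of the triangle inequality.
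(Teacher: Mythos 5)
Your proof is correct. Note that the paper does not prove this statement at all --- it is quoted verbatim from Fuglede's paper \cite[Theorem 2]{MR97720} with only a citation --- and your argument (scaling $\varepsilon f$ for sufficiency; summing admissible $f_n$ with $\int_X f_n^p\,dm\le 2^{-n}$ and splitting into the Minkowski regime $p\ge 1$ and the subadditivity regime $0<p<1$ for necessity) is essentially Fuglede's original proof.
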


\begin{remark}
{\rm By Theorem~\ref{th:excep}, it is easy to see that a family of curves in $\mathbb R^n$ that are not locally rectifiable is a $p$-exceptional set for $p\geq 1$. }
\end{remark}


\subsection{Exceptional family of curves on Carnot groups}

Recall the definition of horizontal subbundle $H\G\subset T\G$ from Section~\ref{sec:carnot-group}.
We say that a function $f\colon I\to \G$, $I\subset\mathbb R$ is $d_\rho$-Lipschitz continuous if it is Lipschitz continuous between metric spaces $(I,d_E)$ and $(\G,d_\rho)$.

\begin{definition}\label{horizontal} 
A continuous curve $\gamma\colon I\to \G$, $I\subset \mathbb R$, is called {\it horizontal} if it is $d_E$-Lipschitz continuous and the tangent vector $\dot\gamma(t)$ belongs to $H_{\gamma(t)}\G$ for almost all $t\in I$.
\end{definition}
We mention a well know example of a $p$-exceptional family of curves on a Carnot group. 
 
\begin{example}\label{not admissible}
We define ${\bf M}: =\{\mc H^1_{d_\rho} \res \gamma, \ \gamma\in\Gamma \}$,
where
$$
\Gamma:=\{\gamma\colon [0,1]\to \G \ \mbox{is 
a $d_E$-Lipschitz continuous curve}\}, 
$$
and
$$ 
\Gamma_H:=\{\gamma:[0,1]\to \G \ \mbox{is  a horizontal curve}\},
\quad
{\bf M}_H: =\{ \mc H^1_{d_\rho} \res \gamma, \ \gamma\in \Gamma_H\}.$$
We claim that
$
{\bf M} \setminus {\bf M}_H \quad\mbox{is $p$-exceptional for all $p>0$.}
$
By Theorem~\ref{th:excep} it is enough to find a nonnegative function $f\in L^p(\mathbb G,\mathbf g_{\G})$ such that 
$$
\int_{\gamma}f\,d\mathcal H^1_{d_\rho}=\infty
$$
for all $\gamma\in \Gamma \setminus \Gamma_H$.
Without loss of generality we can assume that the loci of curves $\gamma\in \Gamma$ are contained in a compact set $K\subset \mathbb G$.
Then the function
$$
f(x)=\begin{cases}
1,\quad&\text{if}\quad x\in K,
\\
0,\quad&\text{if}\quad x\notin K,
\end{cases}
$$
belongs to $L^p(\mathbb G,\mathcal L^N)$. 
Assume now, by contradiction, that there is 
$\gamma\in \Gamma \setminus \Gamma_H$ such that $\int_{\gamma}f\,d\mc H^1_{d_\rho}=\mc H^1_{d_\rho}(\gamma)<\infty$. 
We will show that $\gamma\in \Gamma_H$, yielding a contradiction. 
The proof is more or less standard, but we prefer to give complete arguments.
By the property of module of a minorized family of curves, see~\cite[Theorem 6.4]{MR0454009}  
we can assume that the curve $\gamma$ is injective. 
Since  $\gamma([0,1])$ is a closed and connected set, by~\cite[Theorem
4.4.8]{MR2039660} we can write 
\begin{equation}\label{rectifiability}
\gamma([0,1])=\gamma_0 \bigcup \Big(\cup_{k=1}^{\infty} \gamma_k([0,1])\Big).
\end{equation}
Here $\gamma_0$ is a Borel set such that
 $\mc H_{d_\rho}^1(\gamma_0)=0$ and $\gamma_k\colon[0,1]\to\gamma([0,1])$ are $d_\rho$-Lipschitz continuous functions. Note also that 
\begin{equation}\label{dc and eu}
d_E(x,y)\leq C d_{\rho}(x,y)\quad
\Longrightarrow\quad
0\le \mc H^1_{d_E}(\gamma) \le \mc H^1_{d_\rho}(\gamma)
\end{equation}
and that $\gamma^{-1}(\gamma_0)$, has zero Lebesgue measure  in $[0,1]$.
Indeed~\eqref{dc and eu} implies that $\mc H^1_{d_E}(\gamma_0)=0.$
Thus we can  apply the area formula of~\cite[Theorem 3.3.1]{MR2039660} for  $A_0:=\gamma^{-1} (\gamma_0) \subset[0,1]$: 
\begin{equation}\label{eq:11}\begin{split}
\mc L^1(A_0) 
= \int_{A_0} \Big|\frac{d\gamma (s)}{ds}\Big|\, d \mc L^1(s)
= \int_{\gamma_0} \mathrm{card}\,( \gamma^{-1}(y)) d \mc H^1_{d_E}(y) = 0.
\end{split}\end{equation}
Thus, by Rademacher's theorem, there exists $A\subset  [0,1]$ with 
$\mc L^1(A)=0$ such that for all $t\in A$   we have that $\gamma(t)\in \cup_k \gamma_k([0,1])$
and $\gamma$ is differentiable at $t$.
 
Since both $\mathbb R$ and $\G$ are Carnot groups, by Pansu-Rademacher
  theorem, all $\gamma_k$'s  are Pansu differentiable in a set $[0,1] \setminus A_1$
  with $\mc L^1(A_1)=0$. For any $k\in\mathbb N$, let us denote by $d_P\gamma_k(t)$ the Pansu differential at a point $t\in [0,1]\setminus A_1$.
 Set now $A:=A_0\cup A_1$.  Arguing as in the proof of \cite[Theorem 3.5 (2)]{FrSerSC07} the Euclidean tangent space
 to $\gamma$ at a point $\gamma (t)$, $t\in [0,1]\setminus A$, coincides with 
 $d_P\gamma_k(\tau)(\mathbb R)$ if $k,\tau$ are such that $\gamma(t)=\gamma_k(\tau)$. Since $d_P\gamma_k$
 is a group homomorphism between $\mathbb R$ and $\G$, it maps $\mathbb R$ into the first (horizontal) layer
 of $\G$, so that $\dot\gamma(t)\in H_{\gamma(t)}\G$, yielding a contradiction.
\end{example}

\begin{remark} By \cite[Theorem 4.2.1]{MR2039660} and \cite[Remark 4.1.3]{MR2039660},
we can always assume that
if $t\in [0,1]\setminus A$ then $|\dot\gamma(t)|=1$. Thus, if $\gamma(t)=\gamma_k(\tau)$,
and 
$$
\gamma_k(\tau) = \sum_{j=1}^{{\bf d}_1} u_j(\tau)X_{j1}(\gamma_k(\tau)),
$$
then $\|u \|_{L^\infty} \le C$,
where $C$ is independent of $\tau$. Thus, in the definition of $\Gamma_H$ we can replace
``horizontal'' by ``admissible'', see~\cite{MR3971262}.
\end{remark}

\subsection{Exceptional families of Radon measures on Carnot groups}\label{sec:Radon}


\begin{definition} If $\mu$ is a measure on a metric space $(X,\rho)$, and $h>0$, then the values
\begin{equation*}
\Theta^h_*(\mu,x) = \liminf\limits_{r\to 0} \frac{\mu(B_{\rho}(x,r))}{r^h},\quad \text{and} \quad
\Theta^{h,*}(\mu,x) = \limsup\limits_{r\to 0} \frac{\mu(B_{\rho}(x,r))}{r^h},
\end{equation*}
are called the lower and upper $h$-density of $\mu$ at the point $x\in X$, respectively. We say that measure $\mu$ has $h$-density $\Theta^h(\mu,x)$ if
$$
0<\Theta^h_*(\mu,x)=\Theta^h(\mu,x)=\Theta^{h,*}(\mu,x)<\infty.
$$
\end{definition}

\begin{lemma}\label{lem:L^p} 
Let a Carnot group $\G$ of topological dimension $N$ and homogeneous dimension $Q$ be endowed with a distance function $d_\rho$ of type $(D_2)-(D_4)$. Let $\mu$ be a Radon measure on $\G$. If 
$
\Theta^h(\mu,x)>0$ for $1<h<Q$ and $\mu$-{a.e.} $x\in \G$,
then
$\Theta^h(\mu,\cdot) \in L^p(\G, \mathbf g_{\G})$ for any $p>0$.
\end{lemma}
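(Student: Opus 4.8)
The plan is to establish the stronger statement that $\Theta^h(\mu,\cdot)$ vanishes $\mathbf g_{\G}$-almost everywhere; the conclusion $\Theta^h(\mu,\cdot)\in L^p(\G,\mathbf g_{\G})$ for every $p>0$ then follows trivially, since the a.e.-zero function belongs to every such $L^p$ space. The underlying heuristic is that a point with positive $h$-density for an exponent $h<Q$ must lie on a set that is lower-dimensional, hence negligible, for the $Q$-dimensional Haar measure $\mathbf g_{\G}$ (recall $\mathbf g_{\G}=\mathcal L^N$ is comparable to $\mathcal H^Q_{d_\rho}$ by~\eqref{eq:QNequivalence}). Thus the target is to control the super-level sets of the upper density rather than the density function directly.

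For fixed $t>0$ I would consider $A_t:=\{x\in\G:\ \Theta^{h,*}(\mu,x)>t\}$. For each $x\in A_t$ there are arbitrarily small radii $r$ with $\mu(B_\rho(x,r))> t\,r^h$, so these balls form a fine (Vitali) cover of $A_t$. Localizing to a relatively compact open set $U$ (legitimate since $\mu$, being Radon, is finite on $d_\rho$-bounded sets by Proposition~\ref{compareuclidtop}, and $\G$ is a countable union of such sets), I would invoke the $5r$-covering lemma, which holds in every metric space and therefore requires no Besicovitch-type property of $(\G,d_\rho)$. This produces a disjoint subfamily $\{B_\rho(x_i,r_i)\}$ whose fivefold dilates cover $A_t\cap U$, and summing gives
\[
\mathcal H^{h}_{d_\rho,\,10\delta}(A_t\cap U)\le 10^{h}\sum_i r_i^{h}\le \frac{10^{h}}{t}\sum_i\mu(B_\rho(x_i,r_i))\le \frac{10^{h}}{t}\,\mu(U'),
\]
where $U'\supset U$ is a slightly larger bounded neighborhood. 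Letting $\delta\to 0$ yields the density-comparison bound $\mathcal H^{h}_{d_\rho}(A_t\cap U)\le 10^{h}t^{-1}\mu(U')<\infty$.

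The decisive dimensional step is then immediate: since $h<Q$, the elementary inequality $\mathcal H^{Q}_{d_\rho,\delta}\le \delta^{\,Q-h}\,\mathcal H^{h}_{d_\rho,\delta}$ together with $\mathcal H^{h}_{d_\rho}(A_t\cap U)<\infty$ forces $\mathcal H^{Q}_{d_\rho}(A_t\cap U)=0$, and hence $\mathbf g_{\G}(A_t\cap U)=0$ by~\eqref{eq:QNequivalence}. Exhausting $\G$ by countably many bounded sets $U$ and letting $t=1/n\to 0$, I would conclude that $\{x:\ \Theta^{h,*}(\mu,x)>0\}=\bigcup_n A_{1/n}$ is $\mathbf g_{\G}$-null. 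Since $\Theta^h(\mu,x)\le\Theta^{h,*}(\mu,x)$ wherever the $h$-density exists, it follows that $\Theta^h(\mu,\cdot)=0$ for $\mathbf g_{\G}$-a.e.\ $x$, which proves the lemma. I expect no conceptual obstacle here; the only point requiring care is the bookkeeping of the covering argument in the non-Euclidean distance $d_\rho$, namely keeping the selected radii uniformly bounded (achieved by localization) and controlling the total $\mu$-mass of the disjoint balls (achieved by local finiteness of $\mu$). It is worth noting that the hypothesis $h>1$ is not used in this argument, which is valid for every $0<h<Q$.
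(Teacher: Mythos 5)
Your proposal is correct and follows essentially the same route as the paper: both show that the superlevel sets of the density have finite $\mathcal H^h_{d_\rho}$-measure via the standard density-comparison bound (which you re-derive from the $5r$-covering lemma, while the paper cites \cite[Theorem 2.4.3]{MR2039660}), and then conclude from $h<Q$ and~\eqref{eq:QNequivalence} that these sets are $\mathbf g_{\G}$-null, so $\Theta^h(\mu,\cdot)$ vanishes a.e.\ and trivially lies in every $L^p$. Your closing observation that the hypothesis $h>1$ is never used is also consistent with the paper's argument.
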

\begin{proof} Recall the relation $\mathbf g_{\G}\sim\mathcal L^N$.
We notice that the map $x\to \Theta^h(\mu,x)$ is Borel measurable,
see e.g.~\cite[Remark 3.1]{MR756417}. Let us show that
\begin{equation}\label{5.12}
\mathbf g_{\G}(\{x\in \G;\ \Theta^h(\mu,x)>0\})=0.
\end{equation}
Fix $R>0$ and assume by contradiction that
\begin{equation*}
\mathbf g_{\G}\big(B_{d_\rho}(e,R)\cap\{x\in \G\, ; \, \Theta^h(\mu,x)>0\}\big) >0.
\end{equation*}
We have
\begin{equation*}\begin{split}
\mathbf g_{\G}&\big(B_{d_\rho}(e,R)\cap\{x\in \G\, ; \, \Theta^h(\mu,x)>0\}\big)
\\&
=\lim_{k\to\infty} \mathbf g_{\G}\big(B_{d_\rho}(e,R)\cap\{x\in \G\, ; \, \Theta^d(\mu,x)>\frac1k\}\big).
\end{split}\end{equation*}
Denote $E_k=B_{d_\rho}(e,R)\cap\{x\in \G\, ; \, \Theta^h(\mu,x)>\frac1k\}$.
Then there exists $k\in\mathbb N$ such that
$
0< \mathbf g_{\G}\big(E_k\big) <\infty$.
Thus, by~\cite[Theorem 2.4.3]{MR2039660},
\begin{equation}\label{eq:mu-equivalence}
\mathcal H_{d_\rho}^h(E_k)  \le k\omega\, \mu(E_k) \le k\omega\mu\, (B_{d_\rho}(e,R)) < \infty,
\end{equation}
where $\omega$ is a normalisation constant.
On the other hand, the equivalence~\eqref{eq:QNequivalence}
implies that $\mathcal H_{d_\rho}^h(E_k)=\infty$ contradicting~\eqref{eq:mu-equivalence}. Letting $R\to\infty$
we obtain~\eqref{5.12}. This accomplishes the proof.
\end{proof}

\begin{example}\label{ex:Heisenberg-1}
 Consider the
Heisenberg group $H^1$ endowed with  a distance function $d_\rho$ of type $(D_2)-(D_4)$. 
Let $\mathbf M$ be the set of all Radon measures $\mu$
on $H^1$ satisfying
\begin{equation}\label{1 density}
\Theta^1(\mu,x):=\lim_{r\to 0} \, \dfrac{\mu(B_{d_\rho}(x,r))}{r} >0 \qquad\mbox{for  $\mu$-a.e. $x\in H^1$}.
\end{equation}
We let $\mathbf M_0\subset \mathbf M$ to be the measures for which there exists a countable family of Lipschitz maps $\Phi_i\colon A_i \to H^1$, $A_i \subset \mathbb R$,
such that 
$$\mu \big(H^1\setminus \bigcup_i \Phi_i (A_i)\big)=0.$$
We want to show that $ \mathbf M\setminus  \mathbf M_0$ is $p$-exceptional for $p>0$. 
By Theorem~\ref{th:excep} and Lemma~\ref{lem:L^p} it is enough
to show that
$$
\int_{H^1} \Theta^1(\mu,x)\, d\mu(x) = \infty\qquad\mbox{if $\mu\in \mathbf M\setminus  \mathbf M_0$.}
$$
Suppose by contradiction that the above integral is finite for a given measure $\mu\in\mathbf M$. Then $\Theta^1(\mu,x)<\infty$ for $\mu$-a.e. $x\in H^1$. Then applying~\cite[Theorem 1.4]{antonelli2021rectifiable}, which is an analog of one-dimensional Preiss' theorem for $H^1$, we obtain that $\mu\in \mathbf M_0$. That is a contradiction.

\end{example}

\begin{example}
In this example we refer to the definition of a tangent measure $\mathrm{Tan}_h(\mu,x)$ in~\cite[Chapter 14]{MR1333890},\cite{antonelli2021rectifiable}. Consider a Carnot group $\G$ as a metric space $(\mathbb G,d_\rho)$ where $d_\rho$ is one of the distance functions $(D_2)-(D_4)$. For $1 < h < Q$ denote by $\mathbf M$ the set of all Radon measures $\mu$
on $\mathbb G$ such that
\begin{itemize}
\item[i)] $\Theta_*^h(\mu, x)>0$ for $\mu$-a.e. $x\in \mathbb G$;
\item[ii)] $\mathrm{Tan}_h(\mu,x) \subset \{\lambda \mathcal S^h_{d_\rho}\res\mathbb V(x)\}$,
where $\mathbb V(x)$ is a {\it complemented} homogeneous subgroup in $\mathbb G$.
\end{itemize}
We let $\mathbf M_0\subset \mathbf M$ to be a family of Radon measures 
such that there exists a countable family $\{ \Gamma_i:= \mathrm{graph}\, (\phi_i)\, ; \, i=1,2,\dots\}$ of compact intrinsic Lipschitz
graphs of metric dimension $h$, see Section~\ref{sec:IntrinsicLipschitzsurface}, which are intrinsically
differentiable almost everywhere, see~\cite{MR3511465}, 
and such that
$$
\mu\Big( \mathbb G\setminus \bigcup_{i=1}^\infty\Gamma_i\Big) =0.
$$
Then we claim that $\mathbf M\setminus  \mathbf M_0$ is $p$-exceptional family of measures. By the methods of Lemma~\ref{lem:L^p} one can prove that $\Theta^{h,*}(\mu,\cdot) \in L^p(\mathbb G, \mathbf g_{\G})$ for any $p>0$. Thus it is enough 
to show that
$$
\int_{\G} \Theta^{h,*}(\mu,x)\, d\mu(x) = \infty\qquad\mbox{if $\mu\in \mathbf M\setminus  \mathbf M_0$}
$$
by Theorem~\ref{th:excep}.
Suppose by contradiction that the above integral is finite for a measure $\mu\in\mathbf M\setminus  \mathbf M_0$. Then $\Theta^{h,*}(\mu,x)<\infty$ for $\mu$-a.e. $x\in \mathbb G$ and therefore~\cite[Theorem 1.8]{antonelli2021rectifiable}
implies that
$\mu\in \mathbf M_0$, which is a contradiction.
\end{example}



\subsection{Families of surfaces on Carnot groups}


In this section we aim to define families of surfaces that will be of our interest.


\subsubsection{Intrinsic Lipschitz surfaces on the Carnot groups}\label{sec:IntrinsicLipschitzsurface}


In the present section we recall the definition of {\it intrinsic Lipschitz graphs}, i.e. graphs of {\it intrinsic Lipschitz functions}, see~\cite{MR2287539,MR3511465}. Then we give a definition of an intrinsic Lipschitz surface.
A subgroup $\M$ of a Carnot group $\G$ is called a homogeneous subgroup if $\M$ is a homogeneous group with respect to the dilation $\delta_{\lambda}$ defined in~\eqref{eq:delta}. Let us assume that $\G$ is decomposed into complementary homogeneous subgroups: $\G=\M\cdot \HH$, $\M\cap \HH=e$, and let $\mathbf P_{\M}$ and $\mathbf P_{\HH}$ be the canonical projections: $\mathbf P_{\M}\colon \G\to \M$ and $\mathbf P_{\HH}\colon \G\to \HH$ defined by the identity $\mathbf P_{\M}q\cdot \mathbf P_{\HH}q\equiv q$ for $q\in \G$.  The projections define intrinsic cones:
$$
C_{\M,\HH}(e,\beta)=\{p\in \G\mid\ \| \mathbf P_{\M} p\|\leq\beta \|\mathbf P_{\HH}p\|\},\qquad C_{\M,\HH}(q,\beta)=q\cdot C_{\M,\HH}(e,\beta),
$$
where $\beta>0$ is called the opening of the cone $C_{\M,\HH}(q,\beta)$ and $q$ is the vertex. 

\begin{definition}
The graph of a function $f\colon \Omega\to \mathbb H$, where $\Omega$ is an open set of $\M$, is the set 
$$
\graph f\ =\{\ q\cdot f(q)\in \G=\M\cdot \HH\mid\ q\in \Omega\subset\mathbb M\}.
$$
A function $f\colon \Omega\to \HH$, $\Omega\subset \M$, is an {\it intrinsic Lipschitz function} in $\Omega$ with the Lipschitz constant $L>0$ if
$$
C_{\M,\HH}(p,1/L)\cap \graph f =\{p\}\quad\text{for all}\quad p\in \graph f.
$$
An {\it intrinsic Lipschitz graph} is the graph of an intrinsic Lipschitz function. 
\end{definition}
Left translation
of intrinsic Lipschitz graphs are still intrinsic Lipschitz graphs. Following~\cite[Lemma 2.12]{MR3511465}, we set
\begin{equation}\label{eq:c0}
c_0(\mathbb M,\mathbb H):= \inf\{\norm{mh} \,:  \, \norm{m}+ \norm{h} =1 \}.
\end{equation}

\begin{remark}\label{rem:intrinsicgraph}
{\rm We emphasise that there is a subtle difference in the notions of a Lipschitz function between metric spaces and that of {\it intrinsic Lipschitz function} within a Carnot group. 
We refer the  reader to~\cite{MR3511465}.}
\end{remark}


\begin{definition}
The \emph{topological dimension} ${\bf d_t}$ of a (sub)group is the dimension of its Lie algebra. 
The \emph{metric dimension} ${\bf d_m}$ of a Borel set $U\subset \G$ is its Hausdorff dimension, with respect to the Hausdorff measure $\mathcal H_{d_\rho}$ (or $\mc S_{d_\rho}$) for a distance function $d_\rho$ of type $(D_2)-(D_4)$.  
We say that $\M$  is a \emph{$({\bf d_t},{\bf d_m})$-subgroup of $\G$} if $\M$ is a homogeneous subgroup of $\G$ with topological  dimension ${\bf d_t}$ and metric dimension ${\bf d_m}$. We say that $\graph f$ is intrinsic $({\bf d_t},{\bf d_m})$-Lipschitz grapf if $f\colon \Omega\to \HH$, $\Omega\subset \M$, is an intrinsic Lipschitz function and $\M$  is a $({\bf d_t},{\bf d_m})$-subgroup of $\G$.
\end{definition}

The metric dimension ${\bf d_m}$ of a homogeneous subgroup is an integer usually larger than its topological dimension ${\bf d_t}$, see~\cite{MR806700} and coincides with the homogeneous dimension, defined in Section~\ref{sec:carnot-group}.

\begin{definition}\label{def:Lipsurf} 
Suppose $1\le {\bf d_t}\le N-1$ and $1\le {\bf d_m}\le Q-1$. A non-empty subset $S\subset\mathbb{G}$ is called an intrinsic $({\bf d_t},{\bf d_m})$-Lipschitz 
surface (or manifold in the graph representation) in $\G$ if to every point $x\in S$ there correspond an open neighbourhood $U(x,r)\subset\G$,
a decomposition $\G=\mathbb M_U\cdot \mathbb H_U$ and an open set $\Omega\subset\M_U$ such that 
 
\begin{itemize}
\item $x\in U$;
\item $\M_U$  is a $({\bf d_t},{\bf d_m})$-subgroup of $\G$;
\item there exists an intrinsic Lipschitz map $f_U\colon\Omega\to\mathbb H_U$ such that $S\cap U= \graph{f_U}\cap U(x,r)$.
\end{itemize}
\end{definition}


\subsubsection{Measures on the intrinsic Lipschitz graphs and surfaces}\label{sec:surface-measures}

We suppose that $(\mathbb G, \mathcal B, \mathcal L^N,d_\rho)$ is a Carnot group with the Borel $\sigma$-algebra $\mathcal B$, the Lebesgue measure $\mathcal L^N$ which is identified with the Haar measure $\mathbf g_{\G}$, and a distance function $d_\rho$ of types $(D_2-D_4)$. We assume that $\G=\mathbb M\cdot \mathbb H$. 
The following result provides the construction of a Borel measure on an intrinsic Lipschitz graph.

\begin{theorem}\cite[Theorem 3.9]{MR3511465}\label{th:FrSer}
Let $S$ be an intrinsic $({\bf d_t},{\bf d_m})$-Lipschitz 
graph on a Carnot
group $(\G,d_\rho)$. 
Suppose $S=\graph f$ is defined by a decomposition  $\G= \mathbb M\cdot \mathbb H$
and  an intrinsic $L$-Lipschitz function $f\colon\Omega \to\mathbb H$ in the domain $\Omega\subset\M$. Then 
there are positive constants $c_0,c$ depending on the decomposition $\G=\mathbb M\cdot \mathbb H$ such that
\begin{equation}\label{eq:Alhfors_reg}
\Big(\dfrac{c_0(\mathbb M\cdot \mathbb H) }{1+L}\Big)^{{\bf d_m}}
R^{{\bf d_m}}\leq\mc S^{{\bf d_m}}_{d_\rho}(S\cap B_{d_\rho}(x,R))\leq c(\mathbb M\cdot \mathbb H)(1+L)^{d_{m}}R^{{\bf d_m}}
\end{equation}
for all points $x\in S$ and $R>0$.
In particular, the Hausdorff dimension of $S$ with respect to $d_\rho$ equals the homogeneous dimension of the group $\M$.
\end{theorem}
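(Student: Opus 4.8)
The plan is to transfer the whole estimate from the graph $S$ to the base subgroup $\M$, on which the Haar measure $\mathbf g_{\M}$ is Ahlfors ${\bf d_m}$-regular: by the dilation scaling of Haar measure (exactly as in $\mc L^N(\delta_\lambda E)=\lambda^Q\mc L^N(E)$, but with the homogeneous dimension ${\bf d_m}$ of $\M$) one has $\mathbf g_{\M}(B_{d_\rho}(e,r)\cap\M)=r^{{\bf d_m}}\,\mathbf g_{\M}(B_{d_\rho}(e,1)\cap\M)$, and left invariance spreads this to all centres. First I would record two elementary consequences of the complementary decomposition. From~\eqref{eq:c0} and the homogeneity of the projections one gets, for every $p\in\G$,
\begin{equation}\label{eq:proj-c0}
\norm{p}\ge c_0(\M,\HH)\,\big(\norm{\mathbf P_{\M}p}+\norm{\mathbf P_{\HH}p}\big),
\end{equation}
obtained by writing $p=\mathbf P_{\M}p\cdot\mathbf P_{\HH}p$, rescaling by $\delta_{1/s}$ with $s=\norm{\mathbf P_{\M}p}+\norm{\mathbf P_{\HH}p}$, and using that $\delta_\lambda$ is an automorphism preserving $\M$ and $\HH$. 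On the other hand, the cone form $C_{\M,\HH}(p,1/L)\cap\graph f=\{p\}$ of the intrinsic $L$-Lipschitz condition says that for $p,q\in S$ with $p\neq q$ one has $\norm{\mathbf P_{\HH}(p^{-1}q)}\le L\,\norm{\mathbf P_{\M}(p^{-1}q)}$. Combining these with the triangle inequality for the homogeneous norm yields the two-sided estimate
\begin{equation}\label{eq:star}
c_0(\M,\HH)\,\norm{\mathbf P_{\M}(p^{-1}q)}\le d_\rho(p,q)\le (1+L)\,\norm{\mathbf P_{\M}(p^{-1}q)},\qquad p,q\in S.
\end{equation}

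Since $d_\rho$ and $\mc S^{{\bf d_m}}_{d_\rho}$ are left invariant, the decomposition $\G=\M\cdot\HH$ is preserved by left translations, and left translates of intrinsic $L$-Lipschitz graphs are again intrinsic $L$-Lipschitz graphs, it suffices to prove~\eqref{eq:Alhfors_reg} at $x=e$ for a graph with $f(e)=e$; then $e=\Phi(e)\in S$, where $\Phi(m):=m\cdot f(m)$ is the graph map and $\mathbf P_{\M}|_S=\Phi^{-1}$. The decisive simplification is that \emph{at the base point} the projection quasidistance degenerates to the intrinsic distance of $\M$: since $\Phi(e)=e$ and $\mathbf P_{\M}(m\cdot f(m))=m$, estimate~\eqref{eq:star} with $p=e$, $q=\Phi(m)$ reads $c_0(\M,\HH)\,\norm{m}\le d_\rho(e,\Phi(m))\le(1+L)\,\norm{m}$. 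Hence
\begin{equation}\label{eq:trap}
B_{d_\rho}(e,R/(1+L))\cap\M\ \subseteq\ \mathbf P_{\M}\big(S\cap B_{d_\rho}(e,R)\big)\ \subseteq\ B_{d_\rho}(e,R/c_0(\M,\HH))\cap\M .
\end{equation}
I would then introduce the pushforward $\tilde\mu:=\Phi_{\#}\mathbf g_{\M}$, a Radon measure carried by $S$; because $\mathbf P_{\M}|_S=\Phi^{-1}$, the mass $\tilde\mu(S\cap B_{d_\rho}(e,R))$ equals the Haar measure of the set in~\eqref{eq:trap}, so the scaling of $\mathbf g_{\M}$ traps it between $\big(R/(1+L)\big)^{{\bf d_m}}$ and $\big(R/c_0(\M,\HH)\big)^{{\bf d_m}}$ up to the structural constant $\mathbf g_{\M}(B_{d_\rho}(e,1)\cap\M)$. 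Translating back, $\tilde\mu$ is Ahlfors ${\bf d_m}$-regular on $S$ with exactly the dependence on $c_0(\M,\HH)$ and $L$ displayed in~\eqref{eq:Alhfors_reg}.

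Finally I would pass from $\tilde\mu$ to $\mc S^{{\bf d_m}}_{d_\rho}\res S$. The uniform ball bounds for $\tilde\mu$ force, by the standard comparison of an Ahlfors-regular measure with the spherical Hausdorff measure of its support — the bound $\mc S^{{\bf d_m}}_{d_\rho}\res S\gtrsim\tilde\mu$ from a Vitali/$5r$-covering argument with balls centred on $S$, and $\mc S^{{\bf d_m}}_{d_\rho}\res S\lesssim\tilde\mu$ from the elementary upper-density estimate — that $\tilde\mu$ and $\mc S^{{\bf d_m}}_{d_\rho}\res S$ are mutually comparable with constants depending only on the regularity constants. This gives~\eqref{eq:Alhfors_reg}; the final assertion is then immediate, since Ahlfors ${\bf d_m}$-regularity forces the Hausdorff dimension of $S$ to equal ${\bf d_m}$, which for the homogeneous subgroup $\M$ coincides with its homogeneous dimension. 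The main obstacle is the \emph{routing} rather than any single estimate: transferring Hausdorff measure through $\Phi$ naively fails, because $\Phi$ is only intrinsically, not metrically, Lipschitz, and the quasidistance $\norm{\mathbf P_{\M}(\Phi(m)^{-1}\Phi(m'))}$ is genuinely incomparable to $\norm{m^{-1}m'}$ away from the base point (this is the shearing of intrinsic graphs). The argument survives only because every ball is centred at a point of $S$ and that point is normalised to $e$, where the shearing disappears and~\eqref{eq:trap} holds; making the comparison $\mc S^{{\bf d_m}}_{d_\rho}\res S\approx\tilde\mu$ quantitative, so that the constants come out as in~\eqref{eq:Alhfors_reg}, is the step demanding the most care.
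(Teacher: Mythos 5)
This theorem is quoted from \cite[Theorem 3.9]{MR3511465} without proof, and your argument reconstructs essentially the canonical route of that source, which is also exactly the machinery the paper redeploys immediately afterwards for the comparison of $\sigma_S$ and $\mu$: the two-sided projection inclusions at a point of the graph (the paper's citation of formula (44) of \cite{MR3511465}, which is your trapping of $\mathbf P_{\M}(S\cap B_{d_\rho}(x,R))$ between balls of radii $R/(1+L)$ and $R/c_0$), the homogeneity of $\mathbf g_{\M}$ on projected balls (Lemma 2.20 of \cite{MR3511465}), and the density comparison of \cite[2.10.19]{MR0257325} to pass from the pushforward measure to $\mc S^{{\bf d_m}}_{d_\rho}\res S$. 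The argument is correct, including your identification of the one delicate point (the shearing of the graph map away from the normalized base point); the only caveat is that the lower bound in \eqref{eq:Alhfors_reg} for \emph{all} $R>0$ implicitly requires $\Omega=\M$, a hypothesis present in the original statement of \cite[Theorem 3.9]{MR3511465} but blurred in the transcription here.
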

%
Let $\G=\M\cdot \HH$ be a decomposition of $\G$ into complementary homogeneous subgroups.
If $\Omega\subset \M$ is an open set and $f\colon \Omega\to\HH$ is an intrinsic Lipschitz function, then one can define a map $\Phi_f\colon\Omega\to \mathbb G$ by $\Phi_f(m)=m\cdot f(m)$, $m\in\Omega$, that parametrises the intrinsic Lipschitz graph of $f$.
We define two measures
\begin{equation}\label{eq:measure-sigma}
\sigma_S(A)=\mathcal S^{{\bf d_m}}_d\res{\graph{f}}(A)=\mathcal S^{{\bf d_m}}_d(\graph{f}\cap A)
\end{equation}
and
\begin{equation}\label{eq:measure-mu}
\mu(A)=\Big(\big((\Phi_f)_{\sharp}\big)\mathbf g_{\M}\Big)(A)=\mathbf g_{\M}(\Phi_f^{-1}(A))=\mathbf g_{\M}(\Phi_f^{-1}( \graph{f}\cap A))
\end{equation}
for any Borel measurable subset $A\subset \G$. 
Both measures $\sigma_S$ and $\mu$ are concentrated on the set $\graph{f}\subset\G$.

\begin{theorem}
For the measures $\sigma_S$ and $\mu$ defined in~\eqref{eq:measure-sigma} and~\eqref{eq:measure-mu} there are positive constants $C_1$ and $C_2$ such that 
\begin{equation}\label{eq:nu-mu}
C_1\sigma_S(A)\leq\mu(A)\leq C_2\sigma_S(A)
\end{equation}
for any Borel measurable set $A\subset \G$.
\end{theorem}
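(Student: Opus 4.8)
The plan is to prove that both $\mu$ and $\sigma_S$ are Ahlfors ${\bf d_m}$-regular measures concentrated on $S=\graph f$, and then to deduce \eqref{eq:nu-mu} from a density-comparison argument. For $\sigma_S$ the regularity is already in hand: it is exactly Theorem~\ref{th:FrSer}, whose estimate \eqref{eq:Alhfors_reg} says that $\sigma_S(B_{d_\rho}(x,R))$ is comparable to $R^{{\bf d_m}}$ for $x\in S$. Thus the genuine task is to establish the same two-sided bound for $\mu$ and then to convert the two regularity statements into the inequalities \eqref{eq:nu-mu}.

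First I would record the cone inequalities forced on the graph by the intrinsic $L$-Lipschitz condition. Writing $q^{-1}p=\mathbf{P}_{\M}(q^{-1}p)\cdot\mathbf{P}_{\HH}(q^{-1}p)$ for $p,q\in\graph f$, the definition \eqref{eq:c0} of $c_0(\M,\HH)$ together with homogeneity gives $\norm{q^{-1}p}\ge c_0(\M,\HH)\,\norm{\mathbf{P}_{\M}(q^{-1}p)}$, while the condition $C_{\M,\HH}(p,1/L)\cap\graph f=\{p\}$ forces $\norm{\mathbf{P}_{\HH}(q^{-1}p)}\le L\,\norm{\mathbf{P}_{\M}(q^{-1}p)}$ and hence $\norm{q^{-1}p}\le(1+L)\,\norm{\mathbf{P}_{\M}(q^{-1}p)}$. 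Introducing the graph quasidistance $\rho_f(m_0,m):=\norm{\mathbf{P}_{\M}\big(\Phi_f(m_0)^{-1}\Phi_f(m)\big)}$ on $\Omega\subset\M$, these read $c_0(\M,\HH)\,\rho_f(m_0,m)\le d_\rho(\Phi_f(m_0),\Phi_f(m))\le(1+L)\,\rho_f(m_0,m)$. Consequently, for $x_0=\Phi_f(m_0)$,
\[
B_{\rho_f}\!\big(m_0,\tfrac{R}{1+L}\big)\cap\Omega\ \subset\ \Phi_f^{-1}\big(B_{d_\rho}(x_0,R)\big)\ \subset\ B_{\rho_f}\!\big(m_0,\tfrac{R}{c_0(\M,\HH)}\big)\cap\Omega,
\]
so that, by the very definition \eqref{eq:measure-mu} of $\mu$, the bound "$\mu(B_{d_\rho}(x_0,R))$ comparable to $R^{{\bf d_m}}$" will follow once I know that $\mathbf{g}_{\M}$ of a $\rho_f$-ball of radius $r$ is comparable to $r^{{\bf d_m}}$, uniformly in the centre.

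The main obstacle is precisely this last comparison: that $\mathbf{g}_{\M}\big(B_{\rho_f}(m_0,r)\big)$ is comparable to $r^{{\bf d_m}}$. It cannot be reduced to left-invariance on $\M$, because $\rho_f$ is built from the graph map $\Phi_f$ and is neither symmetric nor left-invariant; this is where the intrinsic geometry of the graph genuinely enters. Here I would invoke the intrinsic Lipschitz graph theory of~\cite{MR3511465}: the Haar measure $\mathbf{g}_{\M}$ is Ahlfors ${\bf d_m}$-regular on the homogeneous subgroup $\M$ (whose homogeneous dimension equals ${\bf d_m}$), and the cone bounds above show that $\rho_f$-balls are comparable in $\mathbf{g}_{\M}$-measure to the intrinsic balls of $\M$, uniformly in centre and radius. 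This yields the desired comparison and hence the Ahlfors ${\bf d_m}$-regularity of $\mu$.

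Finally, with $\mu(B_{d_\rho}(x,R))$ and $\sigma_S(B_{d_\rho}(x,R))$ both comparable to $R^{{\bf d_m}}$ for all $x\in S$, the upper and lower ${\bf d_m}$-densities $\Theta^{{\bf d_m},*}$ and $\Theta^{{\bf d_m}}_*$ of each measure are pinched between positive finite constants at every point of $S$. I would then conclude exactly as in Lemma~\ref{lem:L^p}, applying the density-comparison theorem~\cite[Theorem 2.4.3]{MR2039660} in both directions: the two-sided density bound for $\mu$ gives positive constants with $c'\,\mathcal H_{d_\rho}^{{\bf d_m}}\res S\le\mu\le c''\,\mathcal H_{d_\rho}^{{\bf d_m}}\res S$, and since $\sigma_S=\mathcal S_{d_\rho}^{{\bf d_m}}\res S$ is comparable to $\mathcal H_{d_\rho}^{{\bf d_m}}\res S$ (spherical and Hausdorff measures being equivalent), chaining the two comparisons produces $C_1,C_2>0$ with $C_1\sigma_S(A)\le\mu(A)\le C_2\sigma_S(A)$ for every Borel set $A\subset\G$, as claimed.
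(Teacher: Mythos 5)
Your proposal is correct and follows essentially the same route as the paper: both reduce the claim to the two-sided ball estimate $\mu(B_{d_\rho}(x,R))\sim R^{{\bf d_m}}$ for $x\in S$, obtained from the cone/projection estimates for intrinsic Lipschitz graphs in~\cite{MR3511465} (the paper cites Formula (44) and Lemma 2.20 there, which is exactly the fact you defer to when asserting that $\mathbf g_{\M}$ of a $\rho_f$-ball of radius $r$ is comparable to $r^{{\bf d_m}}$), and then conclude by a Federer-type density comparison. The only differences are cosmetic --- the paper compares $\mu$ directly to $\mathcal S^{{\bf d_m}}_{d_\rho}\res S$ via~\cite[Section 2.10.19]{MR0257325} while you pass through $\mathcal H^{{\bf d_m}}_{d_\rho}\res S$ and the Ahlfors regularity of $\sigma_S$ --- though be aware that your phrase ``the cone bounds show that $\rho_f$-balls are comparable in measure to the intrinsic balls of $\M$'' overstates what the cone inequalities give: that measure comparison is precisely the content of the cited projection lemma, not a consequence of the cone bounds alone.
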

\begin{proof}
Notice that if $A\subset \G$, then by definition
\begin{equation}\label{eq:Phi=P}
\mu(A)=\mathbf g_{\M}(\Phi_f^{-1}(\graph{f}\cap A))=\mathbf g_{\M}(\mathbf P_{\M}(\graph{f}\cap A)).
\end{equation}
Let $B_{d_\rho}(x,R)\subset \G$ be a ball centred at $x\in \graph{f}$. Then,
by ~\cite[Formula (44)]{MR3511465},
$$ 
\mathbf P_{\M}(B_{d_\rho}(x,cR))\subset \mathbf P_{\M}(\graph{f}\cap B_{d_\rho}(x,R))\subset \mathbf P_{\M}(B_{d_\rho}(x,R)),
$$
where $c=\dfrac{c_0(\M\cdot\HH) }{1+L}$. Moreover, it was shown in~\cite[Lemma 2.20]{MR3511465} that 
$$
\mathbf g_{\M}\Big(\mathbf P_{\M}\big(B_{d_\rho}(x,R)\big)\Big)=c_1R^{{\bf d_m}}, \quad c_1=\mathbf g_{\M}(B_{d_\rho}(e,1)).
$$
It implies
\begin{equation}\label{May1 eq:1}
c_1c^{{\bf d_m}}R^{{\bf d_m}}\leq \mathbf g_{\M}\Big(\mathbf P_{\M}\big(\graph{f}\cap B_{d_\rho}(x,R)\big)\Big)=\mu(B_{d_\rho}(x,R))\leq c_1R^{{\bf d_m}}
\end{equation}
by~\eqref{eq:Phi=P} and the definition of the measure $\mu$. Passing to the upper and lower limits in~\eqref{May1 eq:1} we obtain 
\begin{equation}\label{densities}\begin{split}
c_1c^{{\bf d_m}} & \leq\liminf_{R\to 0}\frac{\mu(B_{d_\rho}(x,R))}{R^{{\bf d_m}}}
\leq
\limsup_{R\to 0}\frac{\mu(B_{d_\rho}(x,R))}{R^{{\bf d_m}}}\leq  c_1.
\end{split}\end{equation}
Therefore, arguing as in~\cite[Section 2.10.19]{MR0257325}, we can write
\eqref{May1 eq:1} as
\begin{equation}\label{densities}\begin{split}
\tilde c_1c^{{\bf d_m}}  \leq\Theta^{{\bf d_m}}_{*}(\mu,x)
\leq
\Theta^{{\bf d_m},*}(\mu,x)\leq \tilde c_1
\end{split}\end{equation}
for any $x\in \graph{f}$.
Again by~\cite[Section 2.10.19]{MR0257325} if follows  that 
\begin{equation}\label{eq:comparison}
\tilde c_1c^{{\bf d_m}}\mathcal  S^{{\bf d_m}}_{d_\rho}(U)\leq \mu(U)\leq \tilde c_12^{{\bf d_m}}\mathcal S^{{\bf d_m}}_{d_\rho}(U)
\end{equation}
for any Borel measurable set $U\subset\G$. As a set $U$ we take 
$U:= A\cap \graph{f}$ for a Borel set $A\subset \G$. By~\eqref{eq:comparison} we get
$$
C_1\sigma_S(A)=C_1\mathcal S^{{\bf d_m}}_{d_\rho}(U)\leq \mu(U) = \mu(A) \leq C_2 \mathcal S^{{\bf d_m}}_{d_\rho}(U)=C_2\sigma_S(A),
$$
where $C_1:= \tilde c_1c^{{\bf d_m}}$ and $C_2:= \tilde c_12^{{\bf d_m}}$.
\end{proof}

\begin{corollary}\label{cor:enequality}
Let $S=\graph{f}$ be an intrinsic $({\bf d_t},{\bf d_m})$-Lipschitz 
graph in a Carnot
group $(\G,d_\rho)$,
and let $\Phi_f\colon\Omega\to \mathbb G$ 
be the map defined by $\Phi_f(m)=m\cdot f(m)$, that parametrizes $S$.
For the measure $\sigma_S$ 
defined above and any Borel non-negative function $h$ on $\G$ one has
\begin{equation}\label{eq:integral_sigma}
C_1\int_{\G}h(y)\,d\sigma_S(y)\leq\int_{\Omega}(h\circ\Phi_f)(x)\,d\mathbf g_{\M}(x)\leq C_2\int_{\G}h(y)\,d\sigma_S(y).
\end{equation}
\end{corollary}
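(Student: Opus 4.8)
The plan is to reduce the statement to the set-level comparison \eqref{eq:nu-mu} already established for $\sigma_S$ and $\mu$, by combining it with the change-of-variables formula for the pushforward measure $\mu=(\Phi_f)_\sharp\mathbf g_{\M}$. First I would record the elementary identity
\[
\int_{\Omega}(h\circ\Phi_f)(x)\,d\mathbf g_{\M}(x)=\int_{\G}h(y)\,d\mu(y),
\]
valid for every non-negative Borel function $h$ on $\G$. This is nothing but the defining property of the pushforward \eqref{eq:measure-mu}: for $h=\chi_A$, the characteristic function of a Borel set $A\subset\G$, the right-hand side equals $\mu(A)=\mathbf g_{\M}(\Phi_f^{-1}(A))$, while the left-hand side equals $\int_{\Omega}\chi_{\Phi_f^{-1}(A)}\,d\mathbf g_{\M}=\mathbf g_{\M}(\Phi_f^{-1}(A))$, since $\chi_A\circ\Phi_f=\chi_{\Phi_f^{-1}(A)}$. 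By linearity the identity extends to non-negative simple functions, and then to arbitrary non-negative Borel $h$ by monotone convergence, approximating $h$ from below by an increasing sequence of simple functions. Measurability of $h\circ\Phi_f$ is guaranteed because $\Phi_f(m)=m\cdot f(m)$ is continuous, being the composition of the continuous intrinsic Lipschitz map $f$ with the group multiplication.

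Second, I would upgrade the set comparison \eqref{eq:nu-mu}, namely $C_1\sigma_S(A)\le\mu(A)\le C_2\sigma_S(A)$ for all Borel $A\subset\G$, to the integral comparison
\[
C_1\int_{\G}h\,d\sigma_S\le\int_{\G}h\,d\mu\le C_2\int_{\G}h\,d\sigma_S
\]
for non-negative Borel $h$. This is again a routine bootstrap: the inequalities hold for $h=\chi_A$ directly by \eqref{eq:nu-mu}, they pass to non-negative simple functions by positivity and linearity of the integral, and finally to all non-negative Borel $h$ by monotone convergence. Chaining this chain of inequalities with the change-of-variables identity of the first step yields exactly \eqref{eq:integral_sigma}, with the same constants $C_1,C_2$.

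I do not expect a genuine obstacle here: the corollary is a formal consequence of \eqref{eq:nu-mu} together with the abstract pushforward formula, and the only points requiring a word of care are (i) that both $\sigma_S$ and $\mu$ are concentrated on $\graph{f}=\Phi_f(\Omega)$, so that integrating $h$ against $\mu$ over $\G$ genuinely reduces to integrating $h\circ\Phi_f$ over $\Omega$ and only the values of $h$ on $\graph{f}$ are seen; and (ii) that the approximation arguments are carried out for non-negative functions only, where monotone convergence applies without any integrability hypothesis, so that the inequalities remain meaningful and correct even when the integrals equal $+\infty$. No geometric input beyond the Ahlfors-regularity estimates already used to prove \eqref{eq:nu-mu} is needed.
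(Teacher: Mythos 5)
Your proposal is correct and follows essentially the same route as the paper: the paper invokes the abstract pushforward change-of-variables formula (\cite[Theorem 1.19]{MR1333890}) with $X=\Omega$, $Y=\G$, $\Phi=\Phi_f$, $\nu=\mathbf g_{\M}$, and then combines it with the set-level comparison~\eqref{eq:nu-mu}, which is exactly your two-step reduction. The only difference is that you spell out the simple-function/monotone-convergence bootstrap that the paper leaves implicit.
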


\begin{proof} 
We recall a result from~\cite[Theorem 1.19]{MR1333890}.
Let $X$ and $Y$ be two separable metric spaces, $\Phi\colon X\to Y$ a Borel map, $\nu$ a Borel measure on $X$ and $h$ is a Borel non-negative function on $Y$. Then
$$
\int_Yh(y)\,d(\Phi_{\sharp}\nu)(y)=\int_X(h\circ\Phi)(x)\,d\nu(x).
$$
We apply the result for the surface locally parametrised by an intrinsic  Lipschitz graph of $f$, taking $X=\Omega\subset\M$, $Y=\G$, $\Phi=\Phi_f$, $\nu=\mathbf g_{\M}$. Then~\eqref{eq:integral_sigma} follows.
\end{proof}

\begin{remark}\label{C1C2} 
We stress that, if $S$ is an intrinsic $({\bf d_t},{\bf d_m})$-Lipschitz 
surface, then according to Definition~\ref{def:Lipsurf}, the constant
$c_0(\mathbb M,\mathbb H)$ in~\eqref{eq:c0} depends on $S$ and therefore we can write
$c_0(\mathbb M,\mathbb H)=: c_0(S)$ when we consider this constant on a surface $S$.

If in Corollary~\ref{cor:enequality} we track the definition of the constants $C_1,C_2$, then
we see that they depend (up to geometric constants) only on the decomposition $\mathbb M\cdot\mathbb H$ and the Lipschitz
constant $L$. So, we can say that $C_1,C_2$ depend on $S$ and we write $C_1(S)$,
$C_2(S)$.
\end{remark}


\subsubsection{Examples of families of surfaces on Carnot groups}\label{subsec:Examples}


\begin{example} {\it $(1,1)$-intrinsic Lipschitz surfaces.}
Let $\M_X=\exp (tX)$, $t\in \mathbb R$, $X\in \mathfrak g_1$ be a one dimensional commutative subgroup and $\HH_X$ a complementary to $\M_X$ subgroup. Let $\phi_X\colon \M_X\to \HH_X$ be a Lipschitz map. The family
$$
\Phi=\{\phi_X\colon \M_X\to \HH_X:\ \ X\in \mathfrak g_1\}
$$  
is a family of $(1,1)$-intrinsic Lipschitz surfaces, that is a family of horizontal curves.
\end{example}

\begin{example} {\it A family of parametrised intrinsic Lipschitz graphs.}
Let $\M$ and $\HH$ be complementary subgroups and $f\colon \Omega\to\HH$, $\Omega\subset \M$, be an intrinsic Lipschitz function with $S=\graph{f}$. We define 
$$
f_\lambda\colon \delta_\lambda\Omega\to\HH:\ 
f_{\lambda}(m):=\delta_{\lambda}f(\delta_{1/\lambda}m).
$$
Then $S_{\lambda}=\graph{f_{\lambda}}$
is a family of intrinsic Lipschitz graphs, parametrised by $\lambda>0$ and it coincides with $\delta_{\lambda}S$.  

We also consider 
$$
f_q\colon\Omega_q\to\HH, \quad \Omega_q=\{m\in\M:\ \mathbf P_{\M}(q^{-1}m)\in\Omega\},\quad q\in E\subseteq\G
$$
defined by
$$
f_q(m)=\big(\mathbf P_{\HH}(q^{-1}m)\big)^{-1}\cdot f(\mathbf P_{\M}(q^{-1}m)).
$$
Then 
$$
S_q=\graph{f_q}=\{\big(m\cdot f_q(m)\big):\ m\in\Omega_q,\ \  q\in E\subset\G,\ \ f_q\colon \Omega_q\to\HH\}
$$ 
is a family of intrinsic Lipschitz graphs parametrised by $q\in E\subset\G$ and it coincides with $q\cdot S=L_q(S)$. 
The details about the properties of $S_{\lambda}$ and $S_q$ see in~\cite[Theorem 3.2]{MR3511465}.

Particularly, if $q\in E=\mathbb H$, then $\mathbf P_{\M}(q^{-1}m)=m$, $\mathbf P_{\HH}(q^{-1}m)=q^{-1}$. It implies $\Omega_q=\Omega$, and the family $f_q(m)=q\cdot f(m)$ is a family of graphs shifted along the subgroup $\HH$.
\end{example}

\begin{example}
Let $\mathcal F\in\Aut(\G)$ be a  grading preserving automorphism. Let $f\colon \M\to \HH$ be an (intrinsic) Lipschitz function with $S=\graph{f}$.  We define 
$
f_{\mathcal F}\colon\mathcal F(\M)\to\mathcal F(\HH)
$
by
$
f_{\mathcal F}(m)=\big({\mathcal F} f{\mathcal F}^{-1}\big)(m).
$
Then 
$$
S_{\mathcal F}=\graph{f_{\mathcal F}}=\{\big(m\cdot f_{\mathcal F}(m)\big):\ m\in\mathcal F(\M)\}
$$ 
is a family of (intrinsic) Lipschitz graphs and it coincides with $\mathcal F(S)$. Indeed, let $(m\cdot f(m))\in S$, then 
\begin{eqnarray*}
\mathcal F (S)\ni\mathcal F((m\cdot f(m))&=&(\mathcal F m\cdot \mathcal F f(m))=
(\mathcal F m\cdot \mathcal F f(\mathcal F^{-1}\mathcal Fm))
\\
&=&
(m'\cdot \mathcal F f (\mathcal F^{-1}m'))=
(m'\cdot f_{\mathcal F}(m'))\in S_{\mathcal F}.
\end{eqnarray*}

If $\mathcal F\in \Aut(\G)$ preserves the homogeneous norm, then an intrinsic $L$-Lipschitz graph is transformed to an intrinsic $L$-Lipschitz graph. 
\end{example}


\subsection{Exceptional families of intrinsic Lipschitz surfaces}\label{sec:ExceptionalLipSurf}



\subsubsection{Exceptional families for $0<p<1$}


Denote by $\Sigma^{({\bf d_t},{\bf d_m})}$ a family of intrinsic $({\bf d_t},{\bf d_m})$-Lipschitz surfaces in $\mathbb G$. With each surface $S\in \Sigma^{({\bf d_t},{\bf d_m})}$ we associate a measure $\sigma_S$ as in Section~\ref{sec:surface-measures}. 
Let $\Sigma\subset \Sigma^{({\bf d_t},{\bf d_m})}$ be a subfamily and  ${\bf E}$ the system of measures $\sigma_S$, $S\in \Sigma$, associated with $\Sigma$. Then $M_p(\Sigma)=M_p({\bf E})$ denotes the $p$-module of the family of measures $\mathbf E$ as well as the family of the surfaces $\Sigma$.

Note that in~\cite[page 187]{MR97720} it was shown that if $0<p<1$, then any system of Lipschitz $k$-dimensional surfaces $\Sigma$ which intersects the cube
$$
{\rm Cube}_a=\{x\in \mathbb R^n\mid\ |x_{l}|<a,\ l=1,\ldots,n\}
$$
has vanishing $p$-module for any $a>0$. It leads to the fact that $M_p(\Sigma)=0$, $p\in(0,1)$, by the monotonicity of $p$-module. We will study an analog situation on the Carnot groups.

Suppose $\G=\mathbb M\cdot \mathbb H$ is a decomposition of $\G$, and denote by $\mathfrak m$ and $\mathfrak g$ the Lie algebras of the Lie groups $\mathbb M$ and $\G$, respectively. Let us fix a weak Malcev basis $\{W_1^{\mathbb M},\dots,W_{{\bf d_t}}^{\mathbb M},W_{{\bf d_t}+1},\dots,W_{N}\}$ for $\mathfrak g$ through $\mathfrak m$, see~\cite[Theorem 1.1.13]{corwin1990representations}. 
We define the following coordinate maps 
$$
T_{\mathbb M}\colon\mathbb R^{{\bf d_t}}\to \mathbb M:\quad \zeta=(\zeta_1,\ldots,\zeta_{{\bf d_t}})\mapsto \exp(\zeta_1W_1^{\mathbb M})\cdot\ldots\cdot\exp(\zeta_{{\bf d_t}}W_{{\bf d_t}}^{\mathbb M}),
$$
$$
T_{\mathbb G}\colon\mathbb R^{N-{\bf d_t}}\to \mathbb G:\quad s=(s_{{\bf d_t}+1},\ldots,s_N)\mapsto \exp(s_{{\bf d_t}+1}W_{{\bf d_t}+1})\cdot\ldots\cdot\exp(s_NW_N),
$$
$$
T\colon\mathbb R^{N-{\bf d_t}}\to \mathbb M\setminus\mathbb G:\ s=(s_{{\bf d_t}+1},\ldots,s_N)\mapsto \mathbb M\cdot\exp(s_{{\bf d_t}+1}W_{{\bf d_t}+1})\cdot\ldots\cdot\exp(s_{N}W_{N}).
$$
We define the natural projection on the right coset space by
\begin{equation}\label{projection pi}
\pi\colon\G\to \mathbb M\setminus \G:\quad g\mapsto \mathbb M\cdot g.
\end{equation}
The group $\G$ acts on the right on the coset space $\mathbb M\setminus \G$ by
$$
\big(\mathbb M\setminus \G\big)\times\mathbb G\to\mathbb M\setminus \G:\quad(\mathbb M\cdot g,\tilde g)\mapsto \mathbb M\cdot g\tilde g.
$$
Since $\G=\mathbb M\cdot \mathbb H$, the coset space is ``parametrized'' by the elements of $\mathbb H$ in the following sence
$
\mathbb M\cdot g=\mathbb M\cdot mh=\mathbb M\cdot h
$. Moreover $
\mathbb M\cdot g\tilde g=\mathbb M\cdot mh\tilde m\tilde h=\mathbb M\cdot\tilde{\tilde h}
$,
where $\tilde{\tilde h}$ is not necessarily $h\tilde h$.
In the following proposition we formulate a Fubini type theorem related to the right quotient space $\mathbb M\setminus \G$.

\begin{proposition}\cite[Lemma 1.2.13]{corwin1990representations}~\cite[Theorem 15.24]{ross1963abstract}
The map $T=\pi\circ T_{\mathbb G}$ is a diffeomorphism and the push forward measure $\mathbf g_{\mathbb M\setminus\mathbb G}=T_{\#}(\mathcal L^{N-{\bf d_t}})$ is a right $\mathbb G$-invariant measure on the coset space $\mathbb M\setminus\mathbb G$,~\cite[Theorem 1.2.12]{corwin1990representations}. Moreover the right invariant measure $\mathbf g_{\G}$ on $\mathbb G$ and the  measure $\mathbf g_{\mathbb M\setminus\mathbb G}$ on $\mathbb M\setminus\mathbb G$ are related by 
\begin{equation}\label{eq: measure relation}
\int_{\mathbb G}\varkappa(g)d\mathbf g_{\G}=\int_{\mathbb M\setminus\mathbb G}d\mathbf g_{\mathbb M\setminus\mathbb G}\int_{\mathbb M}\varkappa(mg)d\mathbf g_{\mathbb M}
\end{equation}
for any continuous function $\varkappa$ with a compact support.
\end{proposition}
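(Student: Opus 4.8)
The plan is to read this statement as the fusion of two standard results from the representation theory of nilpotent Lie groups: the Malcev coordinate theory, which produces a global chart on the quotient, and Weil's quotient integral formula for unimodular groups. The two cited results carry the essential weight, so the real task is to verify that their hypotheses are met in the Carnot setting and to reconcile the normalizations coming from first- and second-kind exponential coordinates.

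First I would dispatch the diffeomorphism assertion. The weak Malcev basis $\{W_1^{\mathbb M},\dots,W_{{\bf d_t}}^{\mathbb M},W_{{\bf d_t}+1},\dots,W_N\}$ through $\mathfrak m$ supplied by~\cite[Theorem 1.1.13]{corwin1990representations} has the property that every partial span is a subalgebra and that the first ${\bf d_t}$ vectors span $\mathfrak m$. Exponential coordinates of the second kind then make the product map $(\zeta,s)\mapsto T_{\mathbb M}(\zeta)\cdot T_{\mathbb G}(s)$ a global diffeomorphism of $\mathbb R^{{\bf d_t}}\times\mathbb R^{N-{\bf d_t}}$ onto $\mathbb G$; in particular $T_{\mathbb G}$ is a diffeomorphism onto a smooth cross-section meeting each right coset $\mathbb M\cdot g$ exactly once, since the decomposition $g=T_{\mathbb M}(\zeta)\,T_{\mathbb G}(s)$ is unique. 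As $\mathbb M$ is a homogeneous, hence closed, subgroup, $\pi$ is a submersion and $\mathbb M\setminus\mathbb G$ is a smooth manifold; composing the cross-section parametrization with $\pi$ shows $T=\pi\circ T_{\mathbb G}$ is a smooth bijection with smooth inverse, i.e. a diffeomorphism. This is exactly the content of~\cite[Lemma 1.2.13]{corwin1990representations}.

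Next I would address the measure. Every nilpotent Lie group is unimodular, so both $\mathbb G$ and its closed subgroup $\mathbb M$ are unimodular; by the general theory of invariant measures on homogeneous spaces (\cite[Theorem 15.24]{ross1963abstract}, together with~\cite[Theorem 1.2.12]{corwin1990representations}) the quotient $\mathbb M\setminus\mathbb G$ then carries a right-$\mathbb G$-invariant measure, unique up to a positive multiple, and Weil's integral formula~\eqref{eq: measure relation} holds for some normalization of the three measures. What remains is to identify $T_{\#}(\mathcal L^{N-{\bf d_t}})$ with this invariant measure and to check that its normalization is consistent with the Haar measures $\mathbf g_{\mathbb G}$ and $\mathbf g_{\mathbb M}$ already fixed through first-kind coordinates.

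The step I expect to be the main obstacle is precisely this reconciliation of normalizations. The Haar measure $\mathbf g_{\mathbb G}$ was defined as the push-forward of $\mathcal L^N$ under first-kind exponential coordinates, whereas $T$ is built from second-kind coordinates, and the slice measure inherits yet another chart. I would show that the transition between first- and second-kind exponential coordinates is a polynomial map which, because of nilpotency and the ordering of the basis adapted to the stratification, is triangular and unipotent, hence of constant Jacobian determinant. Consequently all the relevant Lebesgue measures agree up to fixed constants, the push-forward $T_{\#}(\mathcal L^{N-{\bf d_t}})$ is genuinely right-$\mathbb G$-invariant, and Weil's formula specializes to~\eqref{eq: measure relation}. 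Tracking these constants is routine once the unipotent triangular structure is in place, so the only real care needed lies in matching conventions rather than in any new analysis.
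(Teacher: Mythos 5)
Your proposal is correct and follows essentially the same route as the paper, which gives no independent proof but simply invokes the Malcev coordinate machinery of \cite[Lemma 1.2.13, Theorem 1.2.12]{corwin1990representations} and the Weil quotient integral formula for unimodular groups \cite[Theorem 15.24]{ross1963abstract} — exactly the two ingredients you verify and assemble. Your additional care about reconciling first- and second-kind exponential coordinates via the unipotent triangular Jacobian is the right way to see that the normalizations match exactly, and it is the standard argument underlying the cited results.
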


By making use of the Vitali covering lemma, we consider a countable family of Euclidean balls $\{B(\xi_j, r)\subset \mathbb R^{{\bf d_t}}, j\in\mathbb N\}$ such that
\begin{equation}\label{eq:Vitali}
\begin{array}{lll}
&\bullet&\  \{B(\xi_j, r),\ j\in\mathbb N\}\  \text {is an open covering of}\  \mathbb R^{{\bf d_t}};
\\
&\bullet&\  \text{the balls}\  B(\xi_j,r/5)\  \text{are disjoint}.
\end{array}
\end{equation}
In particular, if a finite family of balls $\mathfrak B=\{B(\xi_{j_1},3r),\dots,B(\xi_{j_K},3r)\}$ 
has nonempty intersection, then $\#\mathfrak  B\le 30^{{\bf d_t}}$.
Indeed, suppose for sake of simplicity that $\mathfrak  B=\{B(\xi_1,3r),\dots,B(\xi_K,3r)\}$
are such that $\zeta_0\in \cap_{i=1}^K B(\xi_i,3r)$. By triangle inequality
$\cup_{i=1}^K B(\xi_i,3r)\subset B(\xi_1,6r)$. Since $\{B(\xi_1,r/5),\dots,B(\xi_K,r/5)\}$
is a disjoint family in $B(\xi_1,6r)$ we obtain
\begin{equation*}\begin{split}
{\rm vol(B(0,1))}\frac{r^{\bf d_t}}{5^{{\bf d_t}} }\cdot(\#\mathfrak  B) &= | \cup_{i=1}^K B(\xi_i,r/5)| 
\le |B(\xi_1,6r)| = (6r)^{{\bf d_t}}{\rm vol(B(0,1))}.
\end{split}\end{equation*}
Let now $\psi\in C^{\infty}_0(B(0,3r))$ be a cut-off function of the ball $B(0,2r)$
and set
\begin{equation}\label{eq:phi_r}
\phi(\zeta):= \phi_r(\zeta) = \sum_i 2^{-i} |\xi_j - \zeta|^{-1}
\psi(\xi_j - \zeta),
\end{equation}
where $\xi_j$ are centers of the balls in family~\eqref{eq:Vitali}.
\begin{lemma} For any $r>0$ and $0<1<p$ we have $\phi_r\in L^p(\mathbb R^{{\bf d_t}})$.
\end{lemma}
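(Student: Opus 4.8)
The plan is to exploit the strict subadditivity of $t\mapsto t^p$ for $0<p<1$, which lets me integrate the series defining $\phi_r$ term by term and reduce the whole estimate to a single radial integral. (I read the hypothesis as $0<p<1$, consistent with the heading of this subsection; for $p\ge 1$ the argument below breaks down, as I note at the end.) First I would invoke the elementary inequality $\big(\sum_j a_j\big)^p\le \sum_j a_j^p$, valid for nonnegative $a_j$ precisely when $0<p<1$, applied pointwise in $\zeta$ with $a_j=2^{-j}|\xi_j-\zeta|^{-1}\psi(\xi_j-\zeta)$. Integrating over $\R^{{\bf d_t}}$ and using Tonelli's theorem to exchange the nonnegative sum with the integral gives
\[
\int_{\R^{{\bf d_t}}}\phi_r(\zeta)^p\,d\zeta\le \sum_j 2^{-jp}\int_{\R^{{\bf d_t}}}|\xi_j-\zeta|^{-p}\,\psi(\xi_j-\zeta)^p\,d\zeta.
\]

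Next, for each fixed $j$ the substitution $\eta=\xi_j-\zeta$ (translation invariance of Lebesgue measure) shows that the inner integral equals the $j$-independent quantity $I:=\int_{\R^{{\bf d_t}}}|\eta|^{-p}\psi(\eta)^p\,d\eta$. Since $\psi\in C^\infty_0(B(0,3r))$ with $0\le\psi\le 1$, I bound $I\le\int_{B(0,3r)}|\eta|^{-p}\,d\eta$, and passing to polar coordinates this becomes a dimensional constant times $\int_0^{3r}\rho^{{\bf d_t}-1-p}\,d\rho$, which is finite exactly when ${\bf d_t}-1-p>-1$, i.e. $p<{\bf d_t}$; and this holds because $p<1\le {\bf d_t}$. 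Finally the remaining prefactor $\sum_j 2^{-jp}$ is a convergent geometric series of ratio $2^{-p}<1$, so the product $I\cdot\sum_j 2^{-jp}$ is finite and $\phi_r\in L^p(\R^{{\bf d_t}})$.

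There is no real obstacle here once the right tool is identified; the argument is essentially routine. The only point that genuinely uses the range of $p$ is the single inequality $p<1$, and it enters twice: subadditivity of $t\mapsto t^p$ legitimizes the term-by-term estimate, and $p<1\le{\bf d_t}$ guarantees local integrability of the singularity $|\eta|^{-p}$. For $p\ge 1$ neither step survives — in particular when ${\bf d_t}=1$ the radial integral diverges — which is precisely why this lemma belongs to the $0<p<1$ regime. Note that the bounded-overlap estimate recorded just above (at most $30^{{\bf d_t}}$ of the balls $B(\xi_j,3r)$ meet at any point) is \emph{not} needed for this range, since subadditivity already absorbs the overlap; it would be the natural substitute in the convexity argument one would run for $p\ge 1$.
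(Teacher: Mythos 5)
Your argument is correct and is essentially the same as the paper's: both rest on the subadditivity $(\sum_j a_j)^p\le\sum_j a_j^p$ for $0<p<1$, followed by termwise integration, translation to reduce to $\int_{B(0,3r)}|\eta|^{-p}\,d\eta<\infty$, and the geometric series $\sum_j 2^{-jp}$. Your side remark is also accurate — the bounded-overlap count $30^{{\bf d_t}}$ that the paper records before the estimate is not actually needed in this range of $p$, and the hypothesis ``$0<1<p$'' in the statement is indeed a typo for $0<p<1$.
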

\begin{proof} If $\zeta$ is fixed, then $\psi(\xi_i- \zeta)>0$ if and only if $\zeta \in B(\xi_i,3r)$,
which is possible for at most $30^{{\bf d_t}}$ values of $j$. Thus, if $0<p<1$, then
$$
|\phi_r(\zeta)|^p \leq \big( \sum_i 2^{-i}  |\xi_i - \zeta|^{-1}\psi(\xi_i - \zeta)\big)^p \le C_p \sum_i 2^{-ip}  |\xi_i - \zeta|^{-p}\psi^p(\xi_i - \zeta),
$$
so that
\begin{equation*}\begin{split}
&\int_{\mathbb R^{{\bf d_t}}}  \phi_r(\zeta)^p \le C_p \sum_i 2^{-ip}
\int_{\mathbb R^{{\bf d_t}}}  |\xi_i - \zeta|^{-p}\psi^p(\xi_i - \zeta) \,d\zeta
\\& 
\le
C_p \sum_i 2^{-ip}
\int_{B(\xi_i,3r)}  |\xi_i - \zeta|^{-p} \,d\zeta
=  C_p \sum_i 2^{-ip}
\int_{B(0,3r)}  |\zeta|^{-p} \,d\zeta
<\infty.
\end{split}\end{equation*}
\end{proof}

\begin{lemma}\label{lem:infty}
 Let us fix the Vitali covering as in~\eqref{eq:Vitali} by balls of a radius $r$. Then for any $x\in B(\xi_i,r)$ the function $\phi_{r}$ defined in~\eqref{eq:phi_r} satisfies
\begin{equation}\label{eq:phi_r-infty}
\int_{B(x,r)}\phi_{r}(\zeta)\,d\zeta=\infty.
\end{equation}
\begin{proof} We have 
$
B(x,r)\subset B(\xi_i,2r)
$
by the triangle inequality and $x\in B(\xi_i,r)$. Then for $\zeta\in B(x,r)$ we get
$$
\phi_{r}(\zeta)\geq 2^{-i}|\xi_i - \zeta|^{-1}\psi(\xi_i - \zeta)=2^{-i}|\xi_i - \zeta|^{-1}. 
$$
It implies
\begin{eqnarray*}
\int_{B(x,r)}\phi_{r}(\zeta)\,d\zeta\geq
\int_{B(x,r)} 2^{-i} |\xi_j - \zeta|^{-1}\,d\zeta
\geq 
\int_{B(\xi_i,\tilde r)} 2^{-i} |\xi_j - \zeta|^{-1}\,d\zeta
=\infty,
\end{eqnarray*}
for some $\tilde r\in(0,r)$.
\end{proof}
\end{lemma}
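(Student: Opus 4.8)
The plan is to discard all but a single summand of the series~\eqref{eq:phi_r} defining $\phi_r$, and then reduce the claim to the failure of local integrability of a radial singular kernel on a Euclidean ball in $\mathbb R^{{\bf d_t}}$. Since every term in~\eqref{eq:phi_r} is nonnegative, for each $\zeta$ I would retain only the index equal to $i$, which yields the pointwise lower bound $\phi_r(\zeta)\ge 2^{-i}\,|\xi_i-\zeta|^{-{\bf d_t}}\,\psi(\xi_i-\zeta)$. Everything then rests on controlling the two factors on $B(x,r)$: the cut-off $\psi$ and the singular kernel.

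First I would verify that the cut-off factor is identically $1$ on the ball over which we integrate. For $x\in B(\xi_i,r)$ and $\zeta\in B(x,r)$ the triangle inequality gives $|\xi_i-\zeta|\le|\xi_i-x|+|x-\zeta|<2r$, hence $\zeta\in B(\xi_i,2r)$; since $\psi$ is a cut-off of $B(0,2r)$, we obtain $\psi(\xi_i-\zeta)=1$ there. Consequently $\phi_r(\zeta)\ge 2^{-i}\,|\xi_i-\zeta|^{-{\bf d_t}}$ for every $\zeta\in B(x,r)$.

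Next I would localize at the singularity and integrate. The hypothesis $x\in B(\xi_i,r)$ means $|x-\xi_i|<r$, so $\xi_i$ lies in the interior of $B(x,r)$ and one may choose $\tilde r\in\big(0,\,r-|x-\xi_i|\big)$ with $B(\xi_i,\tilde r)\subset B(x,r)$. Restricting the nonnegative integrand to this smaller ball and passing to polar coordinates centered at $\xi_i$ gives
$$
\int_{B(x,r)}\phi_r(\zeta)\,d\zeta\ \ge\ 2^{-i}\int_{B(\xi_i,\tilde r)}|\xi_i-\zeta|^{-{\bf d_t}}\,d\zeta
= c\int_0^{\tilde r}\rho^{-{\bf d_t}}\,\rho^{{\bf d_t}-1}\,d\rho = c\int_0^{\tilde r}\frac{d\rho}{\rho}=\infty,
$$
which is precisely~\eqref{eq:phi_r-infty}. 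The divergence is the borderline logarithmic one, reflecting that the homogeneity of the kernel is exactly the dimension ${\bf d_t}$, so that $|\cdot|^{-{\bf d_t}}$ fails to be locally integrable in $\mathbb R^{{\bf d_t}}$.

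I do not expect a genuine obstacle: the entire point is that one term of the series already forces divergence, so the dyadic weights $2^{-i}$ and the infinitely many other summands play no role here. The only care needed is the pair of elementary estimates — one triangle inequality to guarantee $\psi\equiv 1$ throughout $B(x,r)$, and the strict inequality $|x-\xi_i|<r$ to place the singularity $\xi_i$ strictly inside $B(x,r)$ so that a full solid neighbourhood of it is captured. It is worth emphasising that the exponent ${\bf d_t}$ is exactly the one that makes $\phi_r\in L^p$ for $p<1$ in the preceding lemma, so the same construction simultaneously balances $L^p$-integrability against the non-integrability of the associated surface integrals, which is the mechanism driving the vanishing of the $p$-module for $p\in(0,1)$.
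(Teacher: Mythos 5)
Your argument follows the paper's proof step for step: discard every summand of~\eqref{eq:phi_r} except the $i$-th, use the triangle inequality to see that $B(x,r)\subset B(\xi_i,2r)$ so that the cut-off $\psi$ is identically $1$ on the region of integration, then localize on a small ball $B(\xi_i,\tilde r)\subset B(x,r)$ centred at the singularity and integrate in polar coordinates. The one place where you deviate is the exponent of the kernel: you take the retained term to be $2^{-i}|\xi_i-\zeta|^{-{\bf d_t}}\psi(\xi_i-\zeta)$, whereas the definition~\eqref{eq:phi_r} as printed has $|\xi_j-\zeta|^{-1}$. This difference is material. With the printed exponent $-1$ the kernel is locally integrable on $\mathbb{R}^{{\bf d_t}}$ as soon as ${\bf d_t}\ge 2$, since $\int_0^{\tilde r}\rho^{-1}\rho^{{\bf d_t}-1}\,d\rho<\infty$; hence the final equality $\int_{B(\xi_i,\tilde r)}2^{-i}|\xi_i-\zeta|^{-1}\,d\zeta=\infty$ in the paper's own proof holds only when ${\bf d_t}=1$, and~\eqref{eq:phi_r-infty} fails otherwise. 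Your exponent $-{\bf d_t}$ is the borderline one: it produces exactly the logarithmic divergence $\int_0^{\tilde r}\rho^{-1}\,d\rho=\infty$ needed here, while still keeping $\phi_r\in L^p(\mathbb{R}^{{\bf d_t}})$ for all $0<p<1$ (the relevant integral becomes $\int_0^{3r}\rho^{({\bf d_t}-1)-p{\bf d_t}}\,d\rho$, finite precisely when $p<1$), which is what the preceding lemma and Theorem~\ref{th:1decomposition} require. So your proof is correct and, in effect, repairs a typo/error in~\eqref{eq:phi_r}; read literally, the paper's proof of this lemma does not go through for ${\bf d_t}\ge 2$. The remaining details of your write-up (the choice $\tilde r<r-|x-\xi_i|$ placing $B(\xi_i,\tilde r)$ inside $B(x,r)$, and $\psi\equiv 1$ on $B(0,2r)$) are exactly as in the paper.
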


A Carnot group $\G$ can admit various decompositions $\G=\M\cdot\mathbb H$ into homogeneous subgroups of the same topological ${\bf d_t}$ and Hausdorff ${\bf d_m}$ dimensions. The general result about the structure of such kind of decompositions and their number is not known. Therefore we restrict to the following cases. The first one when there are finitely many decompositions $\G_{\alpha}=\mathbb M_{\alpha}\cdot \mathbb H_{\alpha}$, $\alpha=1,2,\ldots, l$ into non isomorphic pairs $\mathbb M_{\alpha}\cdot \mathbb H_{\alpha}$. The second case when each pair $\mathbb M_{\alpha}\cdot \mathbb H_{\alpha}$ belongs to an orbit of the action of grading preserving isometries of~$\G$. 

It is enough to consider surfaces belonging to an open bounded set $\mathcal U\subset\G$, for instance a ball. Let $\Sigma$ be a system of Lipschitz surfaces with some specific property, that will be specified in theorems below, and let
$$
\mathbf E:=\{ \sigma_S=\mc S^{{\bf d_m}}_{d_\rho}\res S\, ; \, S\in \Sigma\}
$$
be the system of the associated measures in $(\G,d_\rho)$. Then we denote by
$\Sigma_{\mc U}$ the system of the Lipschitz surfaces $S\cap\mc U$, $S\in \Sigma$, and by ${\bf E}_{\mc U}$  the family of associated measures. If we show that $M_p(\mathbf E_{\mc U})=0$, then it will imply that the system of measures $\mathbf E$ is exceptional by~\cite[Theorem 3 (b)]{MR97720}.

We start from the family of a most simple nature, that is a family of graphs parametrised over a single decomposition $\G=\M\cdot\mathbb H$. Then we consider multiple decompositions and more complicate families of surfaces.

\begin{theorem}\label{th:1decomposition}
Let $\G=\mathbb M\cdot \mathbb H$ be a decomposition of $\G$, and let 
$\Sigma$ be a family of intrinsic $({\bf d_t},{\bf d_m})$-Lipschitz graphs over $\mathbb M$ and $\Sigma_{\mc U}$ the family of  $({\bf d_t},{\bf d_m})$-Lipschitz graphs in a bounded open set $\mc U\subset\G$.
Then the system $\mathbf E_{\mc U}$ is $p$-exceptional for $p\in (0,1)$.
\end{theorem}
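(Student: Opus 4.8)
\emph{Reduction via Fuglede's criterion.} The plan is to apply Theorem~\ref{th:excep}: it suffices to produce a single nonnegative $\mathbf F\in L^p(\G,\mathbf g_{\G})$ with $\int_{\G}\mathbf F\,d\sigma_S=+\infty$ for every $S\in\Sigma_{\mc U}$. The whole construction is driven by the identity $\mathbf P_{\M}(\Phi_f(m))=m$ for $m\in\Omega$. If I look for $\mathbf F$ of the form $\mathbf F=(\tilde F\circ\mathbf P_{\M})\,\mathbf 1_{\mc U}$ for a suitable $\tilde F\colon\M\to[0,+\infty]$, then $(\mathbf F\circ\Phi_f)(m)=\tilde F(m)\,\mathbf 1_{\mc U}(\Phi_f(m))$, and Corollary~\ref{cor:enequality} (applied with $h=\mathbf F$, which is supported in $\mc U$, so that $\int_{\G}\mathbf F\,d\sigma_S$ already equals the integral against the restricted measure in $\mathbf E_{\mc U}$) gives
\begin{equation*}
\int_{\G}\mathbf F\,d\sigma_S\ \ge\ \frac{1}{C_2(S)}\int_{\Omega}(\mathbf F\circ\Phi_f)\,d\mathbf g_{\M}\ =\ \frac{1}{C_2(S)}\int_{\Phi_f^{-1}(\mc U)}\tilde F\,d\mathbf g_{\M}.
\end{equation*}
Since $C_2(S)<\infty$ by Remark~\ref{C1C2}, the theorem reduces to making the last integral diverge for each $S$ while keeping $\mathbf F\in L^p$.

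\emph{Construction of $\tilde F$.} I would transport the construction~\eqref{eq:phi_r} to $\M$ through the weak Malcev coordinate map $T_{\M}\colon\mathbb R^{{\bf d_t}}\to\M$, which pushes $\mathcal L^{{\bf d_t}}$ forward to the Haar measure $\mathbf g_{\M}$ (the standard property of such coordinates, cf.~\cite{corwin1990representations}). Because $\Phi_f^{-1}(\mc U)$ may become arbitrarily small as $S$ ranges over $\Sigma_{\mc U}$ (the intrinsic Lipschitz constants and positions of the graphs are unbounded), a single scale $r$ does not suffice: the singularities of $\phi_r$ sit at the $r$-separated centers $\xi_j$ and can be missed by a tiny ball. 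Hence I superpose all scales,
\begin{equation*}
F:=\sum_{n=1}^{\infty}2^{-n}\phi_{r_n},\qquad r_n\downarrow 0,\qquad \tilde F:=F\circ T_{\M}^{-1}.
\end{equation*}
Each $\phi_{r_n}\in L^p(\mathbb R^{{\bf d_t}})$ by the integrability lemma above, and that computation shows the quasinorms $\|\phi_{r_n}\|_{L^p}$ stay bounded; since $0<p<1$, the $p$-subadditivity $\|u+v\|_p^p\le\|u\|_p^p+\|v\|_p^p$ yields $\|F\|_{L^p}^p\le\sum_n 2^{-np}\|\phi_{r_n}\|_{L^p}^p<\infty$, so $F\in L^p(\mathbb R^{{\bf d_t}})$.

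\emph{Global $L^p$-bound and divergence.} The function $\mathbf F=(\tilde F\circ\mathbf P_{\M})\,\mathbf 1_{\mc U}$ is constant along the $\HH$-fibers, so the cutoff to the bounded set $\mc U$ is essential for integrability. I would estimate $\|\mathbf F\|_{L^p(\G)}$ by the Fubini-type relation~\eqref{eq: measure relation} applied to $\varkappa=\mathbf F^p$ (extended from compactly supported continuous functions to nonnegative Borel functions by monotone approximation); parametrising the cosets of $\M\setminus\G$ by $\HH$ and using $\mathbf P_{\M}(mh)=m$,
\begin{equation*}
\int_{\G}\mathbf F^p\,d\mathbf g_{\G}=\int_{\M\setminus\G}\Big(\int_{\M}\tilde F(m)^p\,\mathbf 1_{\mc U}(mh)\,d\mathbf g_{\M}(m)\Big)\,d\mathbf g_{\M\setminus\G}(h)\ \le\ C\int_{\mathbf P_{\M}(\mc U)}\tilde F^p\,d\mathbf g_{\M}<\infty,
\end{equation*}
since both the inner domain $\{m:mh\in\mc U\}\subset\mathbf P_{\M}(\mc U)$ and the set of admissible cosets are bounded because $\mc U$ is. Thus $\mathbf F\in L^p(\G,\mathbf g_{\G})$. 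For divergence, fix $S=\graph f\in\Sigma_{\mc U}$; as $S$ meets the open set $\mc U$, the set $W:=T_{\M}^{-1}(\Phi_f^{-1}(\mc U))\subset\mathbb R^{{\bf d_t}}$ is open and nonempty, so $B(\zeta_0,\rho)\subset W$ for some $\zeta_0,\rho$. Choosing $n$ with $r_n<\rho$, the Vitali covering at scale $r_n$ gives a center with $\zeta_0\in B(\xi_i,r_n)$, whence Lemma~\ref{lem:infty} yields $\int_{B(\zeta_0,r_n)}\phi_{r_n}=\infty$; since $B(\zeta_0,r_n)\subset B(\zeta_0,\rho)\subset W$ and $F\ge 2^{-n}\phi_{r_n}$, we get $\int_W F=\infty$, i.e. $\int_{\Phi_f^{-1}(\mc U)}\tilde F\,d\mathbf g_{\M}=\infty$. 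By the first display $\int_{\G}\mathbf F\,d\sigma_S=\infty$ for every $S\in\Sigma_{\mc U}$, and Theorem~\ref{th:excep} gives $M_p(\mathbf E_{\mc U})=0$.

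\emph{Main obstacle.} The delicate point is the construction of $\mathbf F$: one must force the divergence uniformly over a family whose projected domains $\Phi_f^{-1}(\mc U)$ have no positive lower bound on their inradii, which is exactly what compels the multi-scale superposition $\sum_n 2^{-n}\phi_{r_n}$ rather than a single $\phi_r$. Simultaneously one must keep the $L^p$-quasinorm finite on all of $\G$ even though $\mathbf F$ does not decay along $\HH$; this tension is resolved by the cutoff to the bounded set $\mc U$ together with the coset decomposition~\eqref{eq: measure relation}.
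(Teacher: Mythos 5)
Your argument is correct and uses the same core ingredients as the paper's proof: Fuglede's criterion (Theorem~\ref{th:excep}), the singular function $\phi_r$ of~\eqref{eq:phi_r} pulled back to $\G$ through $T_{\mathbb M}^{-1}\circ\Pi_{\mathbb M}$, the Fubini-type coset formula~\eqref{eq: measure relation} for the $L^p$ bound, and Lemma~\ref{lem:infty} for divergence. The genuine difference is how you handle the lack of uniformity over the family. The paper stratifies $\Sigma_{\mc U}=\bigcup_{N}\bigcup_{M}\Sigma_{\mc U}(1/M,N)$, produces one witness function per stratum, and then invokes countable subadditivity of the module (\cite[Theorem 3 (b)]{MR97720}); you instead build a single witness $F=\sum_n 2^{-n}\phi_{r_n}$ by superposing all scales, using the $p$-subadditivity $\|u+v\|_p^p\le\|u\|_p^p+\|v\|_p^p$ (valid precisely because $0<p<1$) to keep $F\in L^p$, and you correctly note that the stratification in $N$ is dispensable since $C_2(S)<\infty$ for each fixed $S$ and an infinite integral stays infinite after division by a finite constant. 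Your route buys a single function realizing Fuglede's characterization for the whole family at once and avoids the subadditivity step; the paper's route avoids the multi-scale sum at the cost of the extra union. Two further small points in your favour: you build the cutoff $\mathbf 1_{\mc U}$ into the definition of $\mathbf F$ from the start (the paper's $F$ as written in~\eqref{May1 eq:3} is constant along $\HH$-cosets and only its truncation $\chi_K F$ is shown to lie in $L^p$), and your verification that $\|\phi_{r_n}\|_{L^p}$ stays bounded as $r_n\downarrow 0$ is the one point your superposition adds beyond the paper's lemma, and it does follow from the same computation. Note that both you and the paper rest the divergence step on Lemma~\ref{lem:infty} as stated, so neither argument is more exposed than the other on that point.
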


\begin{proof} 
By definition, for any $S\in\Sigma_{\mc U}$ there exists an intrinsic Lipschitz function $f_S
\colon\Omega_S\to\mathbb H$, , such that 
\begin{itemize}
\item $\Omega_S$ is an open subset of $\mathbb M$;
\item $S=\graph{ f_S}$, i.e. $S=\Phi_S (\Omega_S)$,
where $\Phi_S(m)=m\cdot f_S(m)$, $m\in \Omega_S$;
\end{itemize}

If $R,N>0$ we denote by $\Sigma_{\mc U}(R,N)\subset  \Sigma_{\mc U}$ the family of graphs such that 
\begin{itemize}
\item the open set $T_{\mathbb M}^{-1} \Phi_{S}^{-1}(\mc U)$ contains an Euclidean
ball $B(\zeta_S, R)\subset \mathbb R^{{\bf d_t}}$;
\item the associated
measures $\sigma_S=\mc S^{{\bf d_m}}_{d_\rho}\res S$ satisfy
$$
\int_{\Phi_S^{-1}(\mc U)}(h\circ\Phi_S)(x)\,d\mathbf g_{\M}(x)\leq N \int_{S\cap \mc U}h(y)\,d\sigma_S (y).
$$
\end{itemize}

We fix $R$ and $N$ and denote ${\bf E}_{\mc U}(R,N)$ the family of associated measures to $\Sigma_{\mc U}(R,N)$. To show that $M_p\big({\bf E}_{\mc U}(R,N)\big)=0$ 
it is enough to find $F\in L^p(\G,\mathbf g_{\mathbb G})$ such that 
$
\int_\G F\, d\sigma_S = \infty$ for all $\sigma_S\in \mathbf E_{\mc U}(R,N)$, see Theorem~\ref{th:excep}.
We set
\begin{equation}\label{May1 eq:3}
F= \phi_R \circ T^{-1}_{\mathbb M}  \circ \Pi_{\mathbb M}\colon \mathbb G\to [0,\infty],
\end{equation}
where $\Pi_{\mathbb M}$ is the projection over $\mathbb M$ associated
with $\G=\mathbb M\cdot \mathbb H$, as in~\cite[Formula (28)]{MR3511465},
and $\phi_R$ is the function defined in~\eqref{eq:phi_r} for $r=R$.

\noindent {\bf Step 1: we claim that  $F\in L^p(\G,\mathbf g_{\mathbb G})$.} Since $\mathcal U\subset K\subset\G$ for some compact set $K$, it is enough to show that 
$F_K=\chi_K F\in L^p(\G,\mathbf g_{\mathbb G})$.
If we put $\tilde K:= \pi(K)$, then $\tilde K\subset \mathbb M/\mathbb G$ is compact by the 
continuity of $\pi$ and $ \chi_K(g)\le \chi_{\tilde K} \big( \pi(g)\big)$, since,
if $g\in K$, then $\pi(g)\in \tilde K$.

We apply~\eqref{eq: measure relation} with $\varkappa=F^p_K$ and obtain
\begin{equation*}\begin{split}
&\int_\G |F_K|^p(g)\,  d\mathbf g_{\G}(g) 
= \int_{\mathbb M\setminus\G}\, d\mathbf g_{\mathbb M\setminus \G}
\int_{\mathbb M} |F_K|^p(mg)\, d\mathbf g_{\mathbb M}(m)
\\
&=
\int_{\mathbb M\setminus\G}\, d\mathbf g_{\mathbb M\setminus \G}
\int_{\mathbb M} \chi_K (mg)
|\phi_R \circ T^{-1}_{\mathbb M}\circ\Pi_{\mathbb M}(m)|^p\, d\mathbf g_{\mathbb M}(m)
\\&
\le
\int_{\mathbb M\setminus\G}\, \chi_{\tilde K} (\pi(g))d\mathbf g_{\mathbb M\setminus \G}
\int_{\mathbb M}
|\phi_R \circ T^{-1}_{\mathbb M}(m)|^p\, d\mathbf g_{\mathbb M}(m)
\\&
= c(\tilde K) \int_{\mathbb M}
|\phi_R \circ T^{-1}_{\mathbb M}(m)|^p\, d\mathbf g_{\mathbb M}(m)
= c(\tilde K) \int_{\mathbb R^{{\bf d_t}}}
|\phi_R|^p (\zeta)\, d\mc L^{{\bf d_t}}(\zeta) < \infty.
\end{split}\end{equation*}
\medskip

\noindent {\bf Step 2: we claim that $\int_S F\,d\sigma_S = \infty$ for any $\sigma_S\in \mathbf{E}_{\mc U}(R,N)$.} Assume that
$S\in \Sigma_{\mc U}(R,N)$ is the graph of a Lipschitz function $f_S\colon\Omega_S\to \mathbb H$.
We denote by
$\Phi_S(m):=m\cdot f_S(m)$, $m\in \Omega_S$, the parametrization of $S$ associated
with $f_S$. We stress that $\big(\Pi_{\mathbb M}\circ \Phi_S\big) (m)=m$. By
Corollary~\ref{cor:enequality} with $N=C_2$, and~\eqref{May1 eq:3} we have:
\begin{equation*}\begin{split}
&N\int_{S\cap \mc U} F \, d\sigma_S
\ge \int_{\Phi_S^{-1}(\mc U)} F\circ \Phi_S (m)\, d\mathbf g_{\mathbb M} 
=  \int_{\Phi_S^{-1}(\mc U)} \phi_R \circ T^{-1}_{\mathbb M}\circ \Pi_{\mathbb M}\circ \Phi_S(m)\, d\mathbf g_{\mathbb M} 
\\&
=  \int_{\Phi_S^{-1}(\mc U)} \phi_R \circ T^{-1}_{\mathbb M}(m)\, d\mathbf g_{\mathbb M} 
=  \int_{T_{\mathbb M}^{-1} \Phi_{S}^{-1}(\mc U)} \phi_R (\zeta)\, d\zeta
\ge 
 \int_{B(\zeta_S, R)} \phi_R (\zeta)\, d\zeta.
\end{split}\end{equation*}
By the Vitali covering lemma, there exists $\xi_i$ such that $\zeta_S\in B(\xi_i,R)$. Thus we apply Lemma~\ref{lem:infty} to show that the integral on the right-hand diverges. 

\noindent {\bf Step 3: we claim that $M_p(\Sigma_{\mathcal U})=0$.} From Steps 1 and 2 we conclude that $M_p(\Sigma_{\mathcal U}(R,N))=0$ for any $R,N>0$. We set $R=\frac{1}{M}$ for $M\in\mathbb N$. Then
$$
\Sigma_{\mc U} = \bigcup_{N\in\mathbb N}\bigcup_{M\in\mathbb N} \Sigma_{\mc U}(\frac{1}{M},N).
$$
We conclude by~\cite[Theorem 3 (b)]{MR97720} that $M_p(\Sigma_{\mathcal U})=0$.
\end{proof}

\begin{corollary}
Let $\G=\mathbb M\cdot \mathbb H$ be a decomposition of $\G$ and let 
$\Sigma$ be a family of intrinsic $({\bf d_t},{\bf d_m})$-Lipschitz surfaces, such that locally each surface is represented by an intrinsic Lipschitz graph over $\mathbb M$. Let $\Sigma_{\mc U}$ be the family of Lipschitz surfaces in a bounded open set $\mc U\subset\G$.
Then the system $\mathbf E_{\mc U}$ is $p$-exceptional for $p\in (0,1)$.
\end{corollary}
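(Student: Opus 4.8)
The plan is to localize the proof of Theorem~\ref{th:1decomposition}, exploiting that every surface in $\Sigma$ is a graph over the \emph{same} subgroup $\mathbb M$ on small neighbourhoods, so the \emph{same} global test function built for the theorem serves on each local graph piece.

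First I would reuse the test function from the theorem: with a Vitali covering of radius $R$ as in~\eqref{eq:Vitali} and the associated $\phi_R$ from~\eqref{eq:phi_r}, set $F = \phi_R\circ T^{-1}_{\mathbb M}\circ\Pi_{\mathbb M}$. Step~1 of Theorem~\ref{th:1decomposition}, which shows $F\in L^p(\G,\mathbf g_{\G})$ for $p\in(0,1)$, uses only the Fubini identity~\eqref{eq: measure relation} for $\mathbb M\setminus\G$ and the compactness of a set $K\supset\mc U$; it makes no use of the graph structure, so it carries over verbatim.

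Next, for each $S\in\Sigma$ meeting $\mc U$ I would select a single good local chart. By Definition~\ref{def:Lipsurf}, picking a point $x\in S\cap\mc U$ there is a neighbourhood $V\subset\mc U$ with $S\cap V = \graph{f_V}\cap V$ for an intrinsic Lipschitz $f_V\colon\Omega_V\to\HH$, $\Omega_V\subset\M$; the parametrization $\Phi_{f_V}(m)=m\cdot f_V(m)$ still satisfies $\Pi_{\mathbb M}\circ\Phi_{f_V}=\mathrm{id}$. Hence the open set $T^{-1}_{\mathbb M}\Phi_{f_V}^{-1}(V)\subset\mathbb R^{\bf d_t}$ is nonempty and contains a Euclidean ball $B(\zeta_S, R_S)$. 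As in the theorem I would stratify $\Sigma_{\mc U}$ into subfamilies $\Sigma_{\mc U}(R,N)$: those $S$ admitting such a local chart with $T^{-1}_{\mathbb M}\Phi_{f_V}^{-1}(V)\supset B(\zeta_S,R)$ and with the local comparison constant $C_2(S)$ of Corollary~\ref{cor:enequality} (see Remark~\ref{C1C2}) at most $N$. Applying Corollary~\ref{cor:enequality} to the local graph $f_V$ and using $F\circ\Phi_{f_V}=\phi_R\circ T^{-1}_{\mathbb M}$ together with Lemma~\ref{lem:infty} gives, for $S\in\Sigma_{\mc U}(R,N)$,
\[
N\int_{S\cap\mc U}F\,d\sigma_S \ge \int_{\Phi_{f_V}^{-1}(V)}\phi_R\circ T^{-1}_{\mathbb M}\,d\mathbf g_{\mathbb M} = \int_{T^{-1}_{\mathbb M}\Phi_{f_V}^{-1}(V)}\phi_R\,d\zeta \ge \int_{B(\zeta_S,R)}\phi_R\,d\zeta = \infty,
\]
because $\zeta_S$ lies in some ball $B(\xi_i,R)$ of the Vitali covering. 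Since $F\ge 0$ and $V\subset\mc U$, this forces $\int_\G F\,d\sigma_S=\infty$, whence $M_p(\Sigma_{\mc U}(R,N))=0$ by Theorem~\ref{th:excep}.

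Finally I would assemble the pieces: every $S$ has a finite intrinsic Lipschitz constant on its chosen chart, so $C_2(S)<\infty$, and the chart contains a ball of some radius $R_S>0$; thus $S\in\Sigma_{\mc U}(1/M,N)$ as soon as $1/M\le R_S$ and $N\ge C_2(S)$. Therefore $\Sigma_{\mc U}=\bigcup_{N\in\mathbb N}\bigcup_{M\in\mathbb N}\Sigma_{\mc U}(1/M,N)$, and by countable subadditivity of exceptional families,~\cite[Theorem 3 (b)]{MR97720}, $M_p(\mathbf E_{\mc U})=0$. The only genuine point to verify beyond the theorem is that a \emph{single} good local chart suffices, which is immediate once one observes that restricting the nonnegative integrand $F$ to one graph piece $S\cap V\subset S\cap\mc U$ can only decrease $\int F\,d\sigma_S$; no global cover of $S\cap\mc U$ is required. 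Thus the main obstacle is bookkeeping rather than analysis: ensuring all local charts are taken over the fixed $\M$ (guaranteed by hypothesis) and that the comparison constants remain finite surface by surface.
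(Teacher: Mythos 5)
Your argument is correct and is exactly the localization of the proof of Theorem~\ref{th:1decomposition} that the paper intends (the corollary is stated there without a separate proof): the same test function $F=\phi_R\circ T^{-1}_{\mathbb M}\circ\Pi_{\mathbb M}$ works because Step~1 is independent of the graph structure, and a single local chart $S\cap V=\graph{f_V}\cap V$ with $V\subset\mc U$ already forces $\int_{\G}F\,d\sigma_S=\infty$ via Corollary~\ref{cor:enequality} and Lemma~\ref{lem:infty}. The stratification into $\Sigma_{\mc U}(1/M,N)$ by chart radius and comparison constant, followed by countable subadditivity of the $p$-module, is the same bookkeeping as in the theorem, so nothing further is needed.
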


In the next step we assume that the family of graphs is parametrised over a decomposition $\G=\mathbb M\cdot \mathbb H$ that belongs to the orbit of a subgroup $K\subset {\rm Iso}(\G)$ preserving the decomposition. Here we denote by ${\rm Iso}(\G)$ the group of grading preserving isometries of $\G$. 

\begin{theorem}\label{th:OrbitDecomposition}
Let $\G=\mathbb M\cdot \mathbb H$, and $K$ a subgroup of the group of isometries ${\rm Iso}(\G)$ preserving the decomposition. 
Let $\Sigma$ be a family of intrinsic $({\bf d_t},{\bf d_m})$-Lipschitz graphs over the orbit $K(\mathbb M)$ and $\Sigma_{\mc U}=\{S\cap\mc U:\ S\in \Sigma\}$. 
Then the system of measures $\mathbf E_{\mc U}$ is $p$-exceptional for $p\in (0,1)$.
\end{theorem}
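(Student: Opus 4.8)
The plan is to reduce everything to the single-decomposition result Theorem~\ref{th:1decomposition} by exploiting that the elements of $K$ act as measure preserving isometries. First I would record the invariance properties of an element $\kappa\in K$: since $\kappa$ is a grading preserving isometry of $(\G,d_\rho)$ fixing the identity, it is an automorphism of $\G$ preserving the homogeneous norm, hence it preserves both the Haar measure $\mathbf g_{\G}$ and the spherical Hausdorff measure $\mathcal S^{\bf d_m}_{d_\rho}$. By the third example of Section~\ref{subsec:Examples} it carries the decomposition $\G=\M\cdot\HH$ to $\G=\kappa(\M)\cdot\kappa(\HH)$ and every intrinsic $L$-Lipschitz graph over $\M$ to an intrinsic $L$-Lipschitz graph over $\kappa(\M)$, with $\kappa_\sharp\sigma_S=\sigma_{\kappa(S)}$. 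In particular the constant $c_0$ of~\eqref{eq:c0} and the constants $C_1,C_2$ of Remark~\ref{C1C2} are the \emph{same} for $\kappa(\M)\cdot\kappa(\HH)$ as for $\M\cdot\HH$; this uniformity along the orbit is what ultimately allows a fixed function to serve a whole family of orientations.

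Next I would upgrade Step~2 of the proof of Theorem~\ref{th:1decomposition} from the exact subgroup $\M$ to nearby subgroups of the orbit. Let $F=\phi_R\circ T^{-1}_{\M}\circ\Pi_{\M}$ be the function~\eqref{May1 eq:3}, which lies in $L^p(\G,\mathbf g_{\G})$ for $p\in(0,1)$ by Step~1, its singular set being the union of the cosets $m_i\cdot\HH$ with $m_i=T_{\M}(\xi_i)$. For a graph $S$ over $\kappa(\M)$ the map $G_S:=T^{-1}_{\M}\circ\Pi_{\M}\circ\Phi_S$ is no longer the identity, but as long as $\kappa(\M)$ is complementary to $\HH$ the restriction $\Pi_{\M}|_{\kappa(\M)}\colon\kappa(\M)\to\M$ is a bi-Lipschitz bijection, so $G_S$ is bi-Lipschitz with constants controlled by $c_0(\kappa(\M),\HH)$ and the Lipschitz constant of $S$; hence its image contains a Euclidean ball of a definite radius centred at a point of a Vitali ball. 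Changing variables by $G_S$ and invoking Lemma~\ref{lem:infty} then yields $\int_S F\,d\sigma_S=\infty$. Since complementarity of $\kappa(\M)$ with $\HH$ is an open condition on $\kappa$ and the relevant constants are uniform near the identity, there is an open neighbourhood $\mathcal N\ni\mathrm{id}$ in $K$ such that this single function $F$ forces $\int_S F\,d\sigma_S=\infty$ for every graph $S$ over every $\kappa(\M)$ with $\kappa\in\mathcal N$, after the same stratification by the radius $R$ and the constant $N$ as in Theorem~\ref{th:1decomposition}.

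Finally I would globalise over the orbit. As $K(\M)$ is a separable metric space, the cover $\{\kappa\,\mathcal N:\kappa\in K\}$ admits a countable subcover $\{\kappa_n\mathcal N\}_{n\in\mathbb N}$. For each $n$ the transported function $F_n:=F\circ\kappa_n^{-1}$ lies in $L^p(\G,\mathbf g_{\G})$ with $\|F_n\|_p=\|F\|_p$ by measure invariance and, applying the previous paragraph to $\G=\kappa_n(\M)\cdot\kappa_n(\HH)$, it witnesses that the measures carried by graphs over $\kappa(\M)$ with $\kappa\in\kappa_n\mathcal N$ form a $p$-exceptional subfamily. Writing $R=1/M$ and intersecting with the strata as in Step~3 of Theorem~\ref{th:1decomposition}, the family splits as a countable union $\Sigma_{\mc U}=\bigcup_{n,M,N}\Sigma_{\mc U}^{(n)}(1/M,N)$ of $p$-exceptional subfamilies, so $M_p(\mathbf E_{\mc U})=0$ by~\cite[Theorem~3 (b)]{MR97720}; as explained before Theorem~\ref{th:1decomposition}, this gives that $\mathbf E$ is $p$-exceptional as well. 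The main obstacle is precisely the key step of the second paragraph, namely proving that one coset-singular function keeps diverging on graphs whose base subgroup is merely \emph{close} to $\M$, i.e.\ securing the uniform bi-Lipschitz control of $\Pi_{\M}|_{\kappa(\M)}$ and of $G_S$ on a full neighbourhood of orientations; the rest is invariance bookkeeping together with the countable subadditivity already used for a single decomposition.
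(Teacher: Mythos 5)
Your overall architecture --- localise in $K$, transport the test function by countably many isometries $\kappa_n$, and conclude by countable subadditivity of the module --- is coherent, but it is a genuinely different route from the paper's, and its central step is left open. The paper does not perturb at all: given $S=\graph{\hat f_S}$ over $\hat\M$ with $k(\hat\M\cdot\hat\HH)=\M\cdot\HH$, it conjugates, setting $f_S=k\circ\hat f_S\circ k^{-1}\colon\Omega_S\subset\M\to\HH$, regards the surface as parametrised over the \emph{fixed} subgroup $\M$ via $\Phi_S(m)=k(\hat m)\cdot f_S(k(\hat m))$, and then repeats Steps 1--3 of Theorem~\ref{th:1decomposition} verbatim with the same single function $F$ of \eqref{May1 eq:3}; no neighbourhood of the identity in $K$, no countable subcover, and no comparison of $\Pi_{\mathbb M}$ on nearby subgroups enters. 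Your approach instead keeps each graph over its own base $\kappa(\M)$ and must pay for this with a stability estimate.

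That stability estimate is where the gap lies. You justify the bi-Lipschitz property of $G_S=T^{-1}_{\mathbb M}\circ\Pi_{\mathbb M}\circ\Phi_S$ by the bi-Lipschitz property of $\Pi_{\mathbb M}|_{\kappa(\M)}$. But $\Phi_S(m)=m\cdot f_S(m)$ with $f_S(m)\in\kappa(\HH)$, and $\Pi_{\mathbb M}(m\cdot f_S(m))$ is not $\Pi_{\mathbb M}(m)$: since $\kappa(\HH)\neq\HH$, the $\kappa(\HH)$-valued part contributes to the $\M$-component of the product (already in the abelian case, and a fortiori in a non-abelian $\G$, where the projection of a product is not the product of projections). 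What you actually need is that the whole graph $S$, not merely its base $\kappa(\M)$, projects injectively and bi-Lipschitzly onto a subset of $\M$ containing a ball of definite radius --- equivalently, that an intrinsic $L$-Lipschitz graph over $\kappa(\M)$ is still an intrinsic Lipschitz graph over $\M$ once $\kappa$ is close enough to the identity. This is a statement about intrinsic cones, namely that $C_{\M,\HH}(p,1/L')\subset C_{\kappa(\M),\kappa(\HH)}(p,1/L)$ for suitable $L'$, and the admissible neighbourhood of the identity necessarily shrinks as $L\to\infty$, so $\mathcal N=\mathcal N(L)$ and an extra stratification of $\Sigma_{\mc U}$ by the Lipschitz constant is forced. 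None of this is supplied, and it is exactly the point you flag as ``the main obstacle''; until it is proved the argument does not close. (If you prefer to follow the paper and conjugate each graph back to $\M$, note that the parametrisation $k(\hat m)\cdot f_S(k(\hat m))$ describes $k(S)$ rather than $S$, so what one obtains directly is the divergence of $\int_{k(S)}F\,d\sigma_{k(S)}=\int_S F\circ k\,d\sigma_S$; turning this into the divergence of $\int_S F\,d\sigma_S$ for one fixed $F$ is the same uniformity issue in different clothing.)
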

\begin{proof}

By definition, for any $S\in \Sigma_{\mc U}$ there exists an intrinsic Lipschitz function $\hat f_S\colon\hat \Omega_S\to \hat{\mathbb H}$, where $\hat \Omega_S$ is an open set in the group $\hat{\M}$ such that $\M\cdot\mathbb H\in K(\hat{\M}\cdot \hat{\mathbb H})$. Then there is an isometric diffeomorphism $k\in K$ such that $\M\cdot\mathbb H=k(\hat{\M}\cdot \hat{\mathbb H})$. We write 
$$
f_S=k\circ \hat f_S\circ k^{-1}\colon \Omega_S\to \mathbb H,
$$
where $\Omega_S$ is an open subset in $\M$, such that $k(\hat \Omega_S)=\Omega_S$. Thus for any $S\in \Sigma_{\mc U}$ there exists an intrinsic Lipschitz function $f_S\colon\Omega_S\to \mathbb H$, such that 
\begin{itemize}
\item $\Omega_S$ is an open subset of $\mathbb M$;
\item $S=\graph{ f_S}$, i.e. $S=\Phi_S (\Omega_S)$,
where 
$$
\Phi_S(m)=k\big(\hat m\cdot \hat f_S(\hat m)\big)=k(\hat m)\cdot \Big(k\circ \hat f_S\circ k^{-1}\big(k(\hat m)\big)\Big), \quad m\in \Omega_S.
$$
\end{itemize}
We define $F\in L^p(\G,\mathbf g_{\G})$ as in~\eqref{May1 eq:3} and argue as in Theorem~\ref{th:1decomposition}. 
\end{proof}

\begin{corollary}
Let $\G=\mathbb M\cdot \mathbb H$ and $K\subset {\rm Iso}(\G)$. Let $\Sigma$ be a family of intrinsic $({\bf d_t},{\bf d_m})$-Lipschitz surfaces, such that locally each surface is represented by an intrinsic Lipschitz graph over an element of the orbit $K(\M)$ as in Theorem~\ref{th:OrbitDecomposition}.
Then $\mathbf E_{\mc U}$ is $p$-exceptional for $p\in (0,1)$.
\end{corollary}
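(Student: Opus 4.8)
The plan is to localize each surface into honest intrinsic Lipschitz graphs over orbit elements and then run the argument of Theorem~\ref{th:OrbitDecomposition} on these pieces, gluing the conclusions by countable subadditivity of the module. Exactly as in the passage from Theorem~\ref{th:1decomposition} to its Corollary, the surface hypothesis is used only through Definition~\ref{def:Lipsurf}.

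First I would fix $S\in\Sigma_{\mc U}$ and a point $x\in S\cap\mc U$. By Definition~\ref{def:Lipsurf} and the orbit hypothesis there is a neighbourhood $U(x,r)$, which I shrink so that $U(x,r)\subset\mc U$ using that $\mc U$ is open, a decomposition $\G=\hat{\mathbb M}_S\cdot\hat{\mathbb H}_S$ with $\hat{\mathbb M}_S\in K(\mathbb M)$, and an intrinsic Lipschitz $\hat f_S$ whose graph $G_S$ satisfies $S\cap U(x,r)=G_S\cap U(x,r)$. Choosing $k_S\in K$ with $\mathbb M=k_S(\hat{\mathbb M}_S)$ and conjugating exactly as in the proof of Theorem~\ref{th:OrbitDecomposition}, the piece $G_S\cap U(x,r)$ becomes a graph over the \emph{base} subgroup $\mathbb M$, parametrized by some $\Phi_S$ with $\Pi_{\mathbb M}\circ\Phi_S=\mathrm{id}$; the intrinsic Lipschitz constant is controlled because $k_S$ is a grading-preserving isometry.

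Next I would reuse the single function $F=\phi_R\circ T_{\mathbb M}^{-1}\circ\Pi_{\mathbb M}$ of \eqref{May1 eq:3}, built once over the base $\mathbb M$. Step~1 of Theorem~\ref{th:1decomposition} shows $F\in L^p(\G,\mathbf g_{\G})$ for $0<p<1$, a global statement independent of $S$. For the divergence, Corollary~\ref{cor:enequality} applied to the graph piece on the domain $U(x,r)$, together with $\Pi_{\mathbb M}\circ\Phi_S=\mathrm{id}$, gives
\[
N\int_{S\cap U(x,r)}F\,d\sigma_S\ \ge\ \int_{\Phi_S^{-1}(U(x,r))}\phi_R\circ T_{\mathbb M}^{-1}(m)\,d\mathbf g_{\mathbb M}=\int_{T_{\mathbb M}^{-1}\Phi_S^{-1}(U(x,r))}\phi_R(\zeta)\,d\zeta,
\]
and since the open set $T_{\mathbb M}^{-1}\Phi_S^{-1}(U(x,r))$ contains a Euclidean ball $B(\zeta_S,R)$, Lemma~\ref{lem:infty} forces the last integral to be $+\infty$. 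Because $U(x,r)\subset\mc U$, I conclude $\int_{S\cap\mc U}F\,d\sigma_S=+\infty$ for every $S$ in the corresponding stratum.

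Finally I would stratify: for $R=1/M$, $M\in\mathbb N$, and $N\in\mathbb N$, let $\Sigma_{\mc U}(1/M,N)$ consist of those $S$ whose localizing ball has radius $1/M$ and whose comparison constant in Corollary~\ref{cor:enequality} is at most $N$; then $\Sigma_{\mc U}=\bigcup_{M,N}\Sigma_{\mc U}(1/M,N)$. On each stratum the function $F$ above together with Theorem~\ref{th:excep} gives $M_p=0$, and \cite[Theorem 3 (b)]{MR97720} yields $M_p(\mathbf E_{\mc U})=0$. The only genuinely delicate point is that a single base function $F$ must serve the entire orbit $K(\mathbb M)$; this is exactly what conjugation by the grading-preserving isometries $k_S$ secures, since it transports both the $L^p$-bound and the divergence mechanism back to the base decomposition, and it is already the heart of Theorem~\ref{th:OrbitDecomposition}.
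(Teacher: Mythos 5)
Your proposal is correct and follows essentially the same route the paper intends for this corollary: localize each surface via Definition~\ref{def:Lipsurf} into graph pieces inside $\mc U$, conjugate by isometries $k_S\in K$ back to the base decomposition $\G=\M\cdot\HH$ as in Theorem~\ref{th:OrbitDecomposition}, reuse the single function $F$ of \eqref{May1 eq:3} with Lemma~\ref{lem:infty} for divergence, and stratify over $(1/M,N)$ before invoking \cite[Theorem 3 (b)]{MR97720} — exactly the scheme the authors carry out explicitly in the proof of Theorem~\ref{th:multipleDecomposition-1}. No substantive deviation from the paper's argument.
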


Now we assume that the group $\G$ can be written as $\G=\mathbb M_{\alpha}\cdot \mathbb H_{\alpha}$ for finitely many $\alpha=1,2,\ldots, l$ and $\M_{\alpha}$ being $({\bf d_t},{\bf d_m})$-homogeneous non isomorphic subgroups for all $\alpha$. Under this assumption we state the following result.

\begin{theorem}\label{th:multipleDecomposition}
Let $\Sigma$ be a family of intrinsic $({\bf d_t},{\bf d_m})$-Lipschitz graphs where each graph is parametrised over one of the decompositions $\G=\mathbb M_{\alpha}\cdot \mathbb H_{\alpha}$, $\alpha=1,2,\ldots, l$. Let $\Sigma_{\mc U}=S\cap\,\mc U$, $S\in \Sigma$, and $\mathcal U\subset \G$ be an open set.
Then $M_p(\mathbf E_{\mc U})=0$ for $p\in (0,1)$.
\end{theorem}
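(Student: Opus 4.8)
The plan is to reduce the statement to the single-decomposition case already settled in Theorem~\ref{th:1decomposition}, exploiting that there are only finitely many admissible decompositions. Since each graph $S\in\Sigma$ is parametrised over exactly one of the decompositions $\G=\mathbb M_\alpha\cdot\mathbb H_\alpha$, I would first partition $\Sigma$ into the subfamilies
\[
\Sigma^\alpha:=\{S\in\Sigma:\ S\ \text{is a graph over}\ \mathbb M_\alpha\},\qquad \alpha=1,\dots,l,
\]
and correspondingly set $\Sigma^\alpha_{\mc U}=\{S\cap\mc U:\ S\in\Sigma^\alpha\}$ with associated systems of measures $\mathbf E^\alpha_{\mc U}$. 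By construction $\Sigma_{\mc U}=\bigcup_{\alpha=1}^l\Sigma^\alpha_{\mc U}$, hence $\mathbf E_{\mc U}=\bigcup_{\alpha=1}^l\mathbf E^\alpha_{\mc U}$. Each $\Sigma^\alpha$ is precisely a family of intrinsic $({\bf d_t},{\bf d_m})$-Lipschitz graphs over the \emph{single} decomposition $\G=\mathbb M_\alpha\cdot\mathbb H_\alpha$, so Theorem~\ref{th:1decomposition} applies verbatim and gives $M_p(\mathbf E^\alpha_{\mc U})=0$ for every $\alpha$ and every $p\in(0,1)$.

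It then remains to pass from finitely many null systems to their union, which is the subadditivity of the $p$-module over at most countable unions, namely~\cite[Theorem 3 (b)]{MR97720}, exactly as invoked in Step~3 of the proof of Theorem~\ref{th:1decomposition}. This yields $M_p(\mathbf E_{\mc U})=M_p\big(\bigcup_{\alpha=1}^l\mathbf E^\alpha_{\mc U}\big)=0$ and closes the argument. The only point requiring care is that this union property must hold in the range $0<p<1$, where the usual triangle inequality fails; this is not an obstruction, since given $f_\alpha\in\Adm(\mathbf E^\alpha_{\mc U})$ of small $L^p$-norm the function $f=\big(\sum_{\alpha=1}^l f_\alpha^p\big)^{1/p}$ dominates each $f_\alpha$ pointwise, is therefore admissible for $\mathbf E_{\mc U}$, and satisfies $\int_\G f^p\,d\mathbf g_{\G}=\sum_{\alpha=1}^l\int_\G f_\alpha^p\,d\mathbf g_{\G}$, which is the content of~\cite[Theorem 3 (b)]{MR97720} in this range.

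Equivalently, I could avoid the partition and produce a single test function directly, replacing~\eqref{May1 eq:3} by $F=\sum_{\alpha=1}^l\phi_R\circ T^{-1}_{\mathbb M_\alpha}\circ\Pi_{\mathbb M_\alpha}$: finiteness of its $L^p$-norm follows from Step~1 applied to each summand (finitely many), while for $S\in\Sigma^\alpha_{\mc U}$ the bound $\int_S F\,d\sigma_S\ge\int_S \phi_R\circ T^{-1}_{\mathbb M_\alpha}\circ\Pi_{\mathbb M_\alpha}\,d\sigma_S=\infty$ is exactly Step~2 for the $\alpha$-th decomposition, after which the $(R,N)$-exhaustion of Step~3 finishes. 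I expect no genuinely new obstacle: the entire analytic content is carried by Theorem~\ref{th:1decomposition}, and the finiteness of the index set $\{1,\dots,l\}$ means the non-isomorphism of the $\mathbb M_\alpha$ plays no role beyond guaranteeing that the partition into $\Sigma^\alpha$ is well defined. The genuinely hard generalization, which is deferred to Theorem~\ref{th:multipleDecomposition-1}, is only the case of a continuum of decompositions, where such a crude finite (or countable) union is no longer available.
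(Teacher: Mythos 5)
Your proposal is correct and essentially coincides with the paper's own argument: the paper's proof is exactly your second route, defining the single test function $F=\sum_{\alpha=1}^l\phi_{r_\alpha}\circ T^{-1}_{\mathbb M_\alpha}\circ\Pi_{\mathbb M_\alpha}\circ\chi_\alpha$ (where $\chi_\alpha$ just rewrites $g$ in the $\alpha$-th decomposition), checking $F\in L^p$ termwise and divergence of $\int_S F\,d\sigma_S$ via Step 2 of Theorem~\ref{th:1decomposition} for the relevant $\alpha$. Your first route (partition into $\Sigma^\alpha$ plus Fuglede's subadditivity, which indeed holds for $0<p<1$ and is already invoked in Step 3 of Theorem~\ref{th:1decomposition}) is an equivalent repackaging of the same content.
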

\begin{proof}
We use the notation $\G_{\alpha}=\mathbb M_{\alpha}\cdot \mathbb H_{\alpha}$, $\alpha=1,2,\ldots, l$ and $\G^l=\G_1\times\ldots\times\G_l$. We define the selection map
$
\chi_{\alpha}(g)=m_{\alpha}h_{\alpha}$, $m_{\alpha}\in \mathbb M_{\alpha}$, $h_{\alpha}\in\mathbb H_{\alpha}$.
It can be considered as a composition of the map
\begin{equation*}
\begin{array}{cccccc}
P\colon&\G&\to &\G^l=&\G_1\times\ldots\times\G_l
\\
&g&\mapsto &&(m_{1}h_{1},\ldots, m_{l}h_{l});
\end{array}
\end{equation*}
followed by the projection on $\alpha$-slot. Then we define 
\begin{equation}\label{eq:F5}
F(g)=\sum_{\alpha=1}^l\phi_{r_{\alpha}}\circ T^{-1}_{\M_{\alpha}}\circ \Pi_{\M_{\alpha}}\circ\chi_{\alpha}(g).
\end{equation}
Then as in Theorem~\ref{th:1decomposition} we show that $F\in L^p(\G,\mathbf g_{\G})$. Moreover if 
$S\in \Sigma_{\mc U}(R,N)$ is the graph of a Lipschitz function $f_S\colon(\Omega_{\alpha})_S\to \mathbb H_{\alpha}$, for some $\alpha=1,\ldots,l$, where $(\Omega_{\alpha})_S$ is an open set in $\M_{\alpha}$, then $\Phi_S(m_{\alpha}):=m_{\alpha}\cdot f_S(m_{\alpha})$ the parametrization of $S$ associated with $f_S$. Then
\begin{eqnarray*}
N\int_{S\cap \mc U} F \, d\sigma_S
\geq  
\int_{\Phi_S^{-1}(\mc U)} F\circ \Phi_S (m_{\alpha})\, d\mathbf g_{\mathbb M_{\alpha}} 
\geq
\int_{B((\overline\zeta_{\alpha})_S, r_{\alpha})} \phi_{r_{\alpha}} (\overline\zeta_{\alpha})\, d\overline\zeta_{\alpha}=\infty,
\end{eqnarray*}
as in the proof of Theorem~\ref{th:1decomposition}. 
\end{proof}

\begin{corollary}
Let assume that $\G$ can be decomposed in a finitely many ways $\G=\mathbb M_{\alpha}\cdot \mathbb H_{\alpha}$, $\alpha=1,2,\ldots, l$, with non isomorphic subgroups $\mathbb M_{\alpha}$.

Let $\Sigma$ be a family of intrinsic $({\bf d_t},{\bf d_m})$-Lipschitz surfaces, such that locally each surface is represented by an intrinsic $({\bf d_t},{\bf d_m})$-Lipschitz graph over one of the groups $\mathbb M_{\alpha}$ as in Theorem~\ref{th:multipleDecomposition}.
Then $M_p(\mathbf E_{\mc U})=0$ for $p\in (0,1)$.
\end{corollary}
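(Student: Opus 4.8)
The plan is to follow the localization scheme already used to pass from Theorem~\ref{th:1decomposition} to its corollary, but now with the single function $F$ constructed in~\eqref{eq:F5}. By Theorem~\ref{th:excep} it suffices to exhibit a nonnegative $F\in L^p(\G,\mathbf g_{\G})$ with $\int_S F\,d\sigma_S=\infty$ for every $S\in\Sigma_{\mc U}$; and since $F\ge 0$, it is enough to force the integral to diverge on a \emph{single} local graph piece of each surface. Membership $F\in L^p(\G,\mathbf g_{\G})$ is exactly Step~1 of Theorem~\ref{th:multipleDecomposition}, so only the divergence must be checked.

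First I would invoke Definition~\ref{def:Lipsurf} together with the standing hypothesis: every $S\in\Sigma$ is locally an intrinsic $({\bf d_t},{\bf d_m})$-Lipschitz graph, and each such local chart is a graph over one of the finitely many groups $\M_\alpha$. Fixing $S$ and a point $x_0\in S\cap\mc U$, I choose a neighbourhood $U$ and an index $\alpha$ so that $S\cap U=\graph{f}\cap U$ for an intrinsic Lipschitz $f\colon\Omega\to\HH_\alpha$, $\Omega\subset\M_\alpha$ open, with parametrization $\Phi_S(m)=m\cdot f(m)$. Because $F$ is a sum of nonnegative terms it dominates its $\alpha$-th summand, and for $m\in\Omega$ one has $\Pi_{\M_\alpha}\circ\chi_\alpha\circ\Phi_S(m)=m$, so $F\circ\Phi_S(m)\ge\phi_{r_\alpha}\big(T^{-1}_{\M_\alpha}(m)\big)$. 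Applying Corollary~\ref{cor:enequality} to this graph piece and pushing forward by $T_{\M_\alpha}$ yields
\begin{equation*}
C_2(S)\int_{S\cap U}F\,d\sigma_S\ \ge\ \int_{\Phi_S^{-1}(U)}F\circ\Phi_S\,d\mathbf g_{\M_\alpha}\ \ge\ \int_{T^{-1}_{\M_\alpha}\Phi_S^{-1}(U)}\phi_{r_\alpha}(\zeta)\,d\zeta,
\end{equation*}
which is precisely the lower bound appearing in the proof of Theorem~\ref{th:multipleDecomposition}.

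To make the right-hand integral diverge I would, exactly as in Theorem~\ref{th:1decomposition}, stratify by the size of the good chart. For $R,N>0$ let $\Sigma_{\mc U}(R,N)$ collect the surfaces possessing a local graph piece (over some $\M_\alpha$) whose parameter domain $T^{-1}_{\M_\alpha}\Phi_S^{-1}(U)$ contains a Euclidean ball $B(\zeta_S,R)\subset\R^{{\bf d_t}}$ and whose comparison constant satisfies $C_2(S)\le N$. Building $F=F_R$ from Vitali covers of radius $R$ as in~\eqref{eq:F5}, the Vitali property produces a center $\xi_i$ with $\zeta_S\in B(\xi_i,R)$, and Lemma~\ref{lem:infty} then gives $\int_{B(\zeta_S,R)}\phi_R\,d\zeta=\infty$. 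Hence $M_p\big(\mathbf E_{\mc U}(R,N)\big)=0$ for every $R,N>0$.

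Finally, every $S\in\Sigma$ admits some chart of positive parameter radius and finite comparison constant, so $S\in\Sigma_{\mc U}(1/M,N)$ once $M,N$ are large; thus $\Sigma_{\mc U}=\bigcup_{M,N\in\N}\Sigma_{\mc U}(1/M,N)$, and the countable subadditivity of the module~\cite[Theorem 3 (b)]{MR97720} yields $M_p(\mathbf E_{\mc U})=0$. I expect the only real point of care to be the localization bookkeeping: a surface is a graph only locally, so for each $S$ one must single out one chart over some $\M_\alpha$ that is large enough in parameter space, and then check that the single summed function $F$ absorbs all $l$ admissible indices $\alpha$ at once. The finiteness of the family of decompositions, guaranteed by the hypothesis, is exactly what keeps $F$ in $L^p(\G,\mathbf g_{\G})$ while covering every chart simultaneously.
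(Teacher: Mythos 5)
Your argument is correct and is essentially the proof the paper intends (the corollary is left unproved there, but the route is exactly the one used for Theorem~\ref{th:multipleDecomposition} combined with the localization to a single graph chart that the paper spells out later in Theorem~\ref{th:multipleDecomposition-1}): the summed function $F$ from~\eqref{eq:F5}, the lower bound via Corollary~\ref{cor:enequality} and Lemma~\ref{lem:infty} on one chart over some $\M_\alpha$, and the $(R,N)$-stratification with countable subadditivity. No gaps.
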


The last result in this section is a combination of Theorem~\ref{th:OrbitDecomposition} and Theorem~\ref{th:multipleDecomposition}. 
\begin{theorem}\label{th:multipleDecomposition-1}
Let $\Sigma$ be a family of intrinsic $({\bf d_t},{\bf d_m})$-Lipschitz surfaces where each graph is parametrised over one of the decompositions $\G=\mathbb M_{\alpha}\cdot \mathbb H_{\alpha}$, $\alpha=1,2,\ldots, l$. 
Moreover any term $\mathbb M_{\alpha}\cdot \mathbb H_{\alpha}$ is an element in the orbit of a subgroup $K_{\alpha}\subset {\rm Iso}(\G)$ preserving the decomposition $\mathbb M_{\alpha}\cdot \mathbb H_{\alpha}$.

Let $\Sigma_{\mc U}=S\cap\,\mc U$, $S\in \Sigma$, and $\mathcal U\subset \G$ be an open set.
Then $M_p(\mathbf E_{\mc U})=0$ for $p\in (0,1)$.
\end{theorem}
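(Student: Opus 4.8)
The plan is to fuse the finite test function of Theorem~\ref{th:multipleDecomposition} with the isometric reduction of Theorem~\ref{th:OrbitDecomposition}, so that one non-negative function simultaneously detects every surface in $\Sigma$, no matter which of the finitely many decomposition types it belongs to and which element of the corresponding orbit $K_\alpha(\mathbb{M}_\alpha)$ it is parametrized over. As in the previous proofs, it suffices to show $M_p(\mathbf E_{\mc U}(R,N))=0$ for each fixed $R,N>0$, where $\Sigma_{\mc U}(R,N)$ collects those $S$ for which $T_{\mathbb{M}_\alpha}^{-1}\Phi_S^{-1}(\mc U)$ contains a Euclidean ball $B(\zeta_S,R)$ and the comparison constant of Corollary~\ref{cor:enequality} is at most $N$; the full claim then follows from the exhaustion $\Sigma_{\mc U}=\bigcup_{N}\bigcup_{M}\Sigma_{\mc U}(\tfrac1M,N)$ together with \cite[Theorem 3 (b)]{MR97720}.

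First I would fix, for each $\alpha=1,\dots,l$, the reference decomposition $\G=\mathbb{M}_\alpha\cdot\mathbb{H}_\alpha$, the projection $\Pi_{\mathbb{M}_\alpha}$, the weak Malcev coordinates $T_{\mathbb{M}_\alpha}$, and the function $\phi_{r_\alpha}$ from~\eqref{eq:phi_r} with $r_\alpha=R$, and then define $F$ exactly as in~\eqref{eq:F5},
\[
F(g)=\sum_{\alpha=1}^{l}\phi_{r_\alpha}\circ T^{-1}_{\mathbb{M}_\alpha}\circ\Pi_{\mathbb{M}_\alpha}\circ\chi_\alpha(g).
\]
That $F\in L^p(\G,\mathbf g_{\G})$ is inherited verbatim from Theorem~\ref{th:multipleDecomposition}: $F$ is a finite sum of non-negative terms, each lying in $L^p$ by the Fubini computation of Step~1 of Theorem~\ref{th:1decomposition}, and since $0<p<1$ the elementary inequality $(\sum_\alpha a_\alpha)^p\le\sum_\alpha a_\alpha^p$ bounds $\int_\G F^p\,d\mathbf g_{\G}$ by the finite sum of the integrals of the individual summands.

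The crux is the divergence of $\int_S F\,d\sigma_S$, and this is precisely where the orbit reduction enters. Take $S\in\Sigma_{\mc U}(R,N)$ parametrized over an element $\hat{\mathbb{M}}_\alpha$ of the orbit $K_\alpha(\mathbb{M}_\alpha)$, and choose $k_\alpha\in K_\alpha$ with $k_\alpha(\hat{\mathbb{M}}_\alpha\cdot\hat{\mathbb{H}}_\alpha)=\mathbb{M}_\alpha\cdot\mathbb{H}_\alpha$. Following the proof of Theorem~\ref{th:OrbitDecomposition}, I write $S=\graph{f_S}$ with $f_S=k_\alpha\circ\hat f_S\circ k_\alpha^{-1}\colon\Omega_S\to\mathbb{H}_\alpha$ over $\mathbb{M}_\alpha$ and $\Phi_S(m)=m\cdot f_S(m)$. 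Since $k_\alpha$ carries the product structure $\hat{\mathbb{M}}_\alpha\cdot\hat{\mathbb{H}}_\alpha$ onto $\mathbb{M}_\alpha\cdot\mathbb{H}_\alpha$, one has $f_S(m)\in\mathbb{H}_\alpha$ and hence $\Pi_{\mathbb{M}_\alpha}\circ\chi_\alpha\circ\Phi_S(m)=m$, so the $\alpha$-th summand of $F$ restricted to $S$ equals $\phi_{r_\alpha}\circ T^{-1}_{\mathbb{M}_\alpha}(m)$. Discarding the remaining non-negative summands and applying Corollary~\ref{cor:enequality} as in Step~2 of Theorem~\ref{th:1decomposition}, I obtain
\[
N\int_{S\cap\mc U}F\,d\sigma_S\ge\int_{T_{\mathbb{M}_\alpha}^{-1}\Phi_S^{-1}(\mc U)}\phi_{r_\alpha}(\zeta)\,d\zeta\ge\int_{B(\zeta_S,R)}\phi_{r_\alpha}(\zeta)\,d\zeta,
\]
and the Vitali covering places $\zeta_S$ in some $B(\xi_i,R)$, so Lemma~\ref{lem:infty} forces the right-hand side to be $+\infty$.

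The only genuine obstacle — a verification rather than a new idea — is confirming that the projection identity $\Pi_{\mathbb{M}_\alpha}\circ\chi_\alpha\circ\Phi_S=\mathrm{id}$ really survives the isometric change of base, and that Corollary~\ref{cor:enequality} still holds with a constant $\le N$, i.e. that $k_\alpha$ being a grading-preserving isometry keeps $\sigma_S=\mathcal S^{{\bf d_m}}_{d_\rho}\res S$ Ahlfors regular in the sense of~\eqref{eq:Alhfors_reg} with the same structural constants. Both facts hold because $k_\alpha(\mathbb{M}_\alpha)\cdot k_\alpha(\mathbb{H}_\alpha)$ is again the complementary pair $\mathbb{M}_\alpha\cdot\mathbb{H}_\alpha$ and isometries preserve $\mathcal S^{{\bf d_m}}_{d_\rho}$. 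Granting them, the two steps above give $M_p(\mathbf E_{\mc U}(R,N))=0$, and the exhaustion argument of Step~3 of Theorem~\ref{th:1decomposition} together with \cite[Theorem 3 (b)]{MR97720} completes the proof.
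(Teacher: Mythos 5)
Your proposal is correct and follows essentially the same route as the paper: the same test function $F$ from~\eqref{eq:F5}, the same reduction of each surface to a graph over the reference decomposition $\mathbb M_\alpha\cdot\mathbb H_\alpha$ via an isometry $k_\alpha\in K_\alpha$, the same divergence argument through Corollary~\ref{cor:enequality}, the Vitali covering and Lemma~\ref{lem:infty}, and the same exhaustion over the classes $\Sigma_{\mc U}(\tfrac1M,N)$. The only cosmetic difference is that the paper localizes to a neighbourhood $V$ of a point of $S\cap\mc U$ before invoking the graph representation, whereas you work with the graph directly; the substance is identical.
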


\begin{proof}
We define function $F\in L^{p}(\G,\mathbf g_{\G})$ as in~\eqref{eq:F5}.
Moreover for any point $q\in S\cap \mathcal U$ there is a neighbourhood $V$, such that for $S\cap \mathcal U\cap V$ there exists an intrinsic Lipschitz function $\hat f_S\colon(\hat{\Omega}_S)_{\alpha}\to \hat{\mathbb H}_{\alpha}$, where $(\hat \Omega_S)_{\alpha}\subset \hat\M_{\alpha}$ such that $\M_{\alpha}\cdot\mathbb H_{\alpha}\in K_{\alpha}(\hat\M_{\alpha}\cdot \hat{\mathbb H}_{\alpha})$. By choosing an isometry $k_{\alpha}\in K_{\alpha}$ we
find an intrinsic Lipschitz function $f_S\colon(\Omega_S)_{\alpha}\to \mathbb H_{\alpha}$, such that 
\begin{itemize}
\item $(\Omega_S)_{\alpha}$ is an open subset of $\mathbb M_{\alpha}$, $k_{\alpha}\big((\hat{\Omega}_S)_{\alpha}\big)=(\Omega_S)_{\alpha}$;
\item $S\cap \mathcal U\cap V=\graph{f_S}$, i.e. $S=\Phi_S \big((\Omega_S)_{\alpha}\big)$,
where 
$$
\Phi_S(m)=k_{\alpha}(\hat m)\cdot \Big(k_{\alpha}\circ \hat f_S\circ k^{-1}_{\alpha}\big(k_{\alpha}(\hat m)\big)\Big), \quad m\in (\Omega_S)_{\alpha},\quad \hat m\in (\hat \Omega_S)_{\alpha}.
$$
\end{itemize}
Then we proceed in the same way as in Theorem~\ref{th:1decomposition} and show that 
$$
N\int_{S\cap \mc U\cap V} F \, d\sigma_S
\ge  \int_{B(\zeta_S, R)} \phi_R (\zeta)\, d\zeta=\infty.
$$
\end{proof}

\subsubsection{Exceptional families for $p\geq 1$}\label{subsec:p1}


B. Fuglede proved that the system of $k$-dimensional Lipschitz surfaces in $\mathbb R^n$ which pass through a given point is $p$-exceptional if and only if $kp\leq n$~\cite{MR97720}. Here we show the sufficient part of the analogous statement for Carnot groups.

\begin{theorem}\label{prop:4}
Let $\Sigma\subset \Sigma^{({\bf d_t},{\bf d_m})}$ be a collection of intrinsic $({\bf d_t},{\bf d_m})$-Lipschitz graphs.
Suppose that all the graphs $S\in \Sigma$ contain a common point $g_0\in\mathbb G$. Then for ${\bf d_m} p\leq Q$ we have $M_p(\Sigma)=0$.
\end{theorem}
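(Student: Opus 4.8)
The plan is to apply Fuglede's criterion, Theorem~\ref{th:excep}: to prove $M_p(\Sigma)=0$ it suffices to exhibit a single nonnegative $f\in L^p(\G,\mathbf g_{\G})$ with $\int_S f\,d\sigma_S=+\infty$ for every $S\in\Sigma$. Since all the graphs share the point $g_0$, and each $\sigma_S$ is Ahlfors ${\bf d_m}$-regular by Theorem~\ref{th:FrSer}, I would take a radial weight centred at $g_0$ that decays just slowly enough to be $p$-integrable against the $Q$-regular Haar measure, yet blows up fast enough to diverge against every ${\bf d_m}$-regular surface measure. The two competing exponents are exactly $Q$ (for the ambient measure) and ${\bf d_m}$ (for the surface measures), which is precisely the origin of the threshold ${\bf d_m}\,p\le Q$. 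I would first record the two regularity facts I need. The Haar measure is $Q$-Ahlfors regular, $\mathbf g_{\G}(B_{d_\rho}(x,R))=c_1R^Q$, which follows from left-invariance together with $\mathcal L^N(\delta_\lambda E)=\lambda^Q\mathcal L^N(E)$; and for each $S\in\Sigma$ the lower bound $\sigma_S(B_{d_\rho}(g_0,R))\ge c(S)\,R^{{\bf d_m}}$ holds with $c(S)>0$, because $g_0\in S$. I stress that although $c(S)$ depends on $S$, the divergence below is insensitive to its value, and the function $f$ itself will not depend on $S$.

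When ${\bf d_m}\,p<Q$ I set
\[
f(x)=d_\rho(x,g_0)^{-{\bf d_m}}\,\chi_{B_{d_\rho}(g_0,r_0)}(x)
\]
for a fixed small $r_0$. Using the layer-cake formula and the upper Ahlfors bound for $\mathbf g_{\G}$, the $L^p$-norm reduces to a one-dimensional radial integral comparable to $\int_0^{r_0}s^{\,Q-1-{\bf d_m}p}\,ds$, which is finite precisely because ${\bf d_m}\,p<Q$. On the other hand, for each $S$ the layer-cake formula together with the lower bound $\sigma_S(B_{d_\rho}(g_0,s))\ge c(S)s^{{\bf d_m}}$ gives, after the substitution $t=s^{-{\bf d_m}}$, a lower estimate by $c(S)\int_{r_0^{-{\bf d_m}}}^{\infty}t^{-1}\,dt=+\infty$. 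Thus $f\in L^p(\G,\mathbf g_{\G})$ while $\int_S f\,d\sigma_S=+\infty$ for every $S\in\Sigma$, and Theorem~\ref{th:excep} yields $M_p(\Sigma)=0$.

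The main obstacle is the borderline case ${\bf d_m}\,p=Q$, where the pure power $s^{-{\bf d_m}}$ just fails to lie in $L^p$. Here I would insert a logarithmic correction,
\[
f(x)=d_\rho(x,g_0)^{-{\bf d_m}}\Big(\log\frac{e}{d_\rho(x,g_0)}\Big)^{-\beta}\,\chi_{B_{d_\rho}(g_0,r_0)}(x),
\]
with $r_0<1$ and an exponent $\beta$ to be chosen. The same radial reduction, via the substitution $u=\log(e/s)$, turns the integrability condition $f\in L^p$ into convergence of $\int_{u_0}^{\infty}u^{-\beta p}\,du$, i.e.\ $\beta p>1$, and the surface divergence into divergence of $\int_{u_0}^{\infty}u^{-\beta}\,du$, i.e.\ $\beta\le 1$. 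Both requirements are met simultaneously whenever $1/p<\beta\le 1$, a nonempty range because $p>1$ in this case: an intrinsic surface satisfies ${\bf d_m}\le Q-1$ by Definition~\ref{def:Lipsurf}, so the equality ${\bf d_m}\,p=Q$ forces $p=Q/{\bf d_m}\ge Q/(Q-1)>1$. Choosing such a $\beta$ and invoking Theorem~\ref{th:excep} once more completes this borderline case, and hence the proof in all cases ${\bf d_m}\,p\le Q$.
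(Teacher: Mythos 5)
Your proposal is correct and follows essentially the same route as the paper: Fuglede's criterion applied to the radial weight $d_\rho(\cdot,g_0)^{-{\bf d_m}}$ cut off near $g_0$ (with a logarithmic correction in the borderline case ${\bf d_m}p=Q$), with $L^p$-integrability coming from the $Q$-homogeneity of the Haar measure and divergence of the surface integrals from the lower Ahlfors bound of Theorem~\ref{th:FrSer}. The only difference is cosmetic: you derive the divergence via the layer-cake formula and the lower bound $\sigma_S(B_{d_\rho}(g_0,s))\ge c(S)s^{{\bf d_m}}$ at the common point, whereas the paper sums over dyadic annuli after selecting points $p_j\in S\cap\partial B_{d_\rho}(e,3\cdot 2^{-j-2})$; your variant is, if anything, slightly cleaner since it avoids having to exhibit those points.
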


\begin{proof} Let $\Sigma\subset\Sigma^{({\bf d_t},{\bf d_m})}$ be a collection of intrinsic $({\bf d_t},{\bf d_m})$-Lipschitz graphs containing a common point $g_0\in\G$. We can assume that $g_0=e\in\mathbb G$ by the translation invariance of measures and the fact that the translation of an intrinsic Lipschitz graph is still an intrinsic Lipschitz graph, see~\cite[Theorem 3.2]{MR3511465}. 
We need to find a non-negative measurable function $F\colon\mathbb G\to\mathbb R$ such that $\int_{\mathbb G}F^p\,d\mathbf g_{\G}<\infty$, but $\int_SF\,d\sigma_S=\infty$ for any $S\in \Sigma$. 

Let $\|\cdot\|$ be a homogeneous norm on $\G$, for instance one of the types $(D_2)-(D_4)$ and let $d_\rho$ be a metric associated with the norm $\|\cdot\|$. We set
\begin{equation}\label{eq:case1}
F(g)=
\begin{cases}
\|g\|^{-{\bf d_m}},\quad&\text{if}\quad \|g\|<1,
\\
0,\quad&\text{if}\quad \|g\|\geq 1,
\end{cases}
\qquad {\bf d_m} p< Q.
\end{equation}
Then $F\in L_p(\mathbb G,\mathbf g_{\G})$ since 
$$
\int_{B_{d_\rho}(e,1)}|F|^p\,d\mathcal L^N=\omega\int_{0}^1r^{-p{\bf d_m}+Q-1}dr<\infty
$$ 
for $-p{\bf d_m}+Q>0$, 
where $\omega$ is a suitable constant depending only on $\|\cdot\|$, see, e.g.~\cite[Proposition 1.15]{MR657581}.

Consider intersections $S\cap B_{d_\rho}(e,\frac{1}{2^j})$, $j\in\mathbb N$. We divide the ball $B_{d_\rho}(e,1)$ into the spherical rings $R_j=B_{d_\rho}(e,\frac{1}{2^j})\setminus \overline B_{d_\rho}(e,\frac{1}{2^{j+1}})$. In each ring $R_j$ we choose a point $p_j\in S\cap\partial B_{d_\rho}(e,\frac{3}{2^{j+2}})$, Then $B_{d_\rho}(p_j,\frac{1}{2^{j+3}})\subset R_j$. We observe that $2^{(j+1){\bf d_m}}\geq\|g\|^{-{\bf d_m}}>2^{j{\bf d_m}}$ for $g\in R_j$. Then
\begin{eqnarray*}
\int_SF\,d\sigma_S
&\geq &
\int_{B_{d_\rho}(e,1)\cap S}\|g\|^{-{\bf d_m}}\,d\sigma_S
=
\sum_j\int_{R_j\cap S}\|g\|^{-{\bf d_m}}\,d\sigma_S
\\
&>&
\sum_j 2^{j{\bf d_m}}\mc S^{{\bf d_m}}_{d_\rho}(R_j\cap S)
>
\sum_j 2^{j{\bf d_m}}\mc S^{{\bf d_m}}_{d_\rho}(B_{d_\rho}(p_j,2^{-j-3})\cap S)
\\
&\geq&
c_12^{-3{\bf d_m}}\sum_j 2^{j{\bf d_m}}2^{-j{\bf d_m}}=\infty.
\end{eqnarray*}

If ${\bf d_m} p=Q$, then we need to change the function $F$ to
\begin{equation}\label{eq:case2}
F(g)=
\begin{cases}
\|g\|^{-{\bf d_m}}\big(\ln\frac{2}{\|g\|}\big)^{-\alpha},\quad&\text{if}\quad \|g\|<1,
\\
0,\quad&\text{if}\quad \|g\|\geq 1.
\end{cases}
\end{equation}
If we choose $\alpha\in[\frac{{\bf d_m}}{Q},1]$, then $F\in L^p(\mathbb G,\mathbf g_{\G})$ and $\int_SF(g)\,d\sigma_S=\infty$. Indeed, 
$$
\int_{B_{d_\rho}(0,1)}|F|^p\,d\mathbf g_{\G}=\omega\int_{0}^1r^{-p{\bf d_m}+Q-1}\big(\ln\frac{2}{r}\big)^{-\alpha p}dr=\omega\int_{\ln 2}^{\infty}t^{-\alpha p}dt<\infty
$$
if $\alpha p>1$ or equivalently $\alpha>\frac{{\bf d_m}}{Q}$. From the other side 
\begin{eqnarray*}
\int_SF\,d\sigma_S
&\geq &
\int_{R_j\cap S}\|g\|^{-{\bf d_m}}\Big(\ln\frac{2}{\|g\|}\Big)^{-\alpha}\,d\sigma_S
\\
&>&
\sum_j 2^{j{\bf d_m}}(\ln 2^{j+2})^{-\alpha}\mc S^{{\bf d_m}}_{d_\rho}(B_{d_\rho}(p_j,2^{-j-3})\cap S)
\\
&\geq&
c_12^{-3{\bf d_m}}(\ln 2)^{-\alpha}\sum_j (j+2)^{-\alpha}=\infty
\end{eqnarray*}
for $\alpha<1$.
\end{proof}

\begin{corollary}
Let $\Sigma\subset \Sigma^{({\bf d_t},{\bf d_m})}$ be a collection of intrinsic $({\bf d_t},{\bf d_m})$-Lipschitz surfaces.
Suppose that all the surfaces $\widehat S\in \Sigma$ contain a common point $g_0\in\mathbb G$. Then for ${\bf d_m} p\leq Q$ we have $M_p(\Sigma)=0$.
\end{corollary}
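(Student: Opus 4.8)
The plan is to reduce the corollary to Theorem~\ref{prop:4}, exploiting that an intrinsic Lipschitz surface is, by Definition~\ref{def:Lipsurf}, an intrinsic Lipschitz graph in a neighbourhood of each of its points, together with the key observation that the divergence argument of Theorem~\ref{prop:4} takes place entirely in shrinking balls around the common point. As there, I would first use the left translation invariance of the measures $\sigma_{\widehat S}$ and of the class of intrinsic Lipschitz surfaces (see~\cite[Theorem 3.2]{MR3511465}) to assume $g_0=e$, and I would keep exactly the same test function $F$, namely~\eqref{eq:case1} when ${\bf d_m}p<Q$ and~\eqref{eq:case2} when ${\bf d_m}p=Q$. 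Since $F$ depends only on the homogeneous norm and not on the surface, the verification that $F\in L^p(\G,\mathbf g_{\G})$ is word for word the one in Theorem~\ref{prop:4}. It then remains to show that $\int_{\widehat S}F\,d\sigma_{\widehat S}=\infty$ for every $\widehat S\in\Sigma$ and to conclude by Theorem~\ref{th:excep}.

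To establish the divergence I would fix $\widehat S\in\Sigma$ and invoke Definition~\ref{def:Lipsurf} to obtain a radius $r_{\widehat S}>0$, a decomposition $\G=\mathbb M_U\cdot\mathbb H_U$ and an intrinsic Lipschitz function $f_U$ such that $\widehat S$ coincides with $\graph{f_U}$ inside $U=U(e,r_{\widehat S})$. Choosing $j_0\in\mathbb N$ with $B_{d_\rho}(e,2^{-j_0})\subset U$, for every $j\ge j_0$ the ring $R_j=B_{d_\rho}(e,2^{-j})\setminus\overline B_{d_\rho}(e,2^{-j-1})$, the point $p_j\in\widehat S\cap\partial B_{d_\rho}(e,3\cdot 2^{-j-2})$, and the ball $B_{d_\rho}(p_j,2^{-j-3})\subset R_j$ all lie inside $U$, where $\widehat S$ is the graph $\graph{f_U}$. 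Hence the lower Ahlfors bound of Theorem~\ref{th:FrSer} applies at $p_j\in\graph{f_U}$, giving $\mathcal S^{{\bf d_m}}_{d_\rho}(\widehat S\cap B_{d_\rho}(p_j,2^{-j-3}))\ge c_1 2^{-(j+3){\bf d_m}}$ for a constant $c_1>0$. Running the estimate of Theorem~\ref{prop:4} over the tail $j\ge j_0$ and using $\|g\|^{-{\bf d_m}}>2^{j{\bf d_m}}$ on $R_j$, each term contributes the fixed positive amount $c_1 2^{-3{\bf d_m}}$ (respectively $c_1 2^{-3{\bf d_m}}(\ln 2)^{-\alpha}(j+2)^{-\alpha}$ when ${\bf d_m}p=Q$ with $\alpha<1$), so the series diverges; dropping the finitely many terms $j<j_0$ is harmless.

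The step I expect to need the most care is the selection of the points $p_j$, that is, the fact that $\widehat S$ actually meets each sphere $\partial B_{d_\rho}(e,3\cdot 2^{-j-2})$ for large $j$; this is the one place where the surface structure must be genuinely used rather than merely quoted. Inside $U$ the set $\widehat S=\graph{f_U}=\Phi_{f_U}(\Omega)$ is the continuous image of the connected open set $\Omega\subset\mathbb M_U$, hence connected, and it contains $e$; by the lower Ahlfors bound $\widehat S\cap B_{d_\rho}(e,R)$ has positive measure for all small $R$, so $\widehat S$ is not contained in any ball $\overline B_{d_\rho}(e,3\cdot 2^{-j-2})$. Connectedness then forces $\widehat S$ to meet the intermediate sphere, exactly as in the graph case. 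Once this localisation is set up, the rest is a routine repetition of the computation in Theorem~\ref{prop:4}.
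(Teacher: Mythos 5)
Your proposal is correct and follows essentially the same route as the paper: reduce to $g_0=e$, keep the test functions \eqref{eq:case1}--\eqref{eq:case2} from Theorem~\ref{prop:4}, localise to a neighbourhood of $e$ where $\widehat S$ is an intrinsic Lipschitz graph, and run the divergence estimate over the tail of rings $R_j$ inside that neighbourhood. The only addition is your explicit justification (via connectedness and the lower Ahlfors bound of Theorem~\ref{th:FrSer}) that $\widehat S$ meets each intermediate sphere so the points $p_j$ exist, a point the paper leaves implicit even in the proof of Theorem~\ref{prop:4}.
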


\begin{proof} We assume that $g_0=e$ and, as in Theorem~\ref{prop:4}, consider the  function $F\in L^p(\G,\mathbf g_{\G})$ for ${\bf d_m} p\leq Q$. To show that $\int_{\widehat S}F\,d\sigma_{\widehat S}=\infty$, we fix a surface $\widehat S\in\Sigma$ and a neighbourhood $U$ of $e$, such that $S=\widehat S\cap U$ is an intrinsic Lipschitz graph. 
Then we argue as in Theorem~\ref{prop:4}.
\end{proof}

In the following sections we will study $p$-exeptional sets of intrinsic Lipschitz surfaces passing through a common point when ${\bf d_m}p>Q$.  We will make special constructions of intrinsic Lipschitz surfaces that will reveal the situation of being  $p$-exceptional. The first step to the construction is the study of an analog of a Grassmann manifold on special type of Carnot groups.


\section{Orthogonal Grassmannians}\label{secOrthGrassmannians}


In this section we construct orthogonal Grassmannians of Lie subalgebras on specially chosen H-type Lie algebras. We start from the overview of Grassmannians of $k$-plains of $n$-dimensional Euclidean space, reminding that they are orbits under the action (modulo the isotropy subgroup) of the isometry group $O(n)$ of the Euclidean space, see Section~\ref{sec:EuclGr}. In order to make a proper construction of the Grassmannian of subalgebras we first remind in Section~\ref{sec:IsomGroup} the structure of the isometry group $\Iso$ of H-type algebras. Then we make construction of orthogonal Grassmannians of subalgebras and study their properties in Section~\ref{sec:HGr}


\subsection{Overview over the Grassmannians in $\mathbb K^n$}\label{sec:EuclGr}



\subsubsection{Definition of the groups $\Orth(n)$, $\U(n)$, and $\Sp(n)$}


The orthogonal group $\Orth(n)$ in $\mathbb R^n$, endowed with the standard Euclidean inner product $\langle .\,,.\rangle_{\mathbb R}$, is
$$
\Orth(n)=\{A\colon \mathbb R^n\to \mathbb R^n:\ \langle Av,Av\rangle_{\mathbb R}=\langle v,v\rangle_{\mathbb R}\}.
$$
The unitary group $\U(n)$ acting in $\mathbb C^n$, endowed with the standard Hermitian inner product $\langle .\,,.\rangle_{\mathbb C}$, is defined analogously by
$$
\U(n)=\{A\colon \mathbb C^n\to \mathbb C^n:\ \langle Av,Av\rangle_{\mathbb C}=\langle v,v\rangle_{\mathbb C}\}.
$$
Finally, the quaternion unitary group (or compact symplectic group) $\Sp(n)$ acting in right quaternion space $\mathbb Q^n$, is defined by
$$
\Sp(n)=\{A\colon \mathbb Q^n\to \mathbb Q^n:\ \langle Av,Av\rangle_{\mathbb Q}=\langle v,v\rangle_{\mathbb Q}\}.
$$
where $\langle .\,,.\rangle_{\mathbb Q}$ is a quaternion Hermitian product in $\mathbb Q^n$, see, for instance~\cite{rossmann2006lie}.

Let us write $\mathbb K$ for the division algebras $\mathbb R$, $\mathbb C$, or $\mathbb Q$ and 
$\U(n,\mathbb K)$ for the groups $\Orth(n)$, $\U(n)$, and $\Sp(n)$, respectively. We let $k$ be an integer satisfying $0<k\leq n$. The Grassmann manifold or Grassmannian $\Gr_k(\mathbb K^n)$ is defined as the set of $k$-dimensional vector subspaces in $\mathbb K^n$:
$$
\Gr_k(\mathbb K^n)=\{V\ \text{is a}\  k-\text{dimensional vector subspace of} \ \ \mathbb K^n\}.
$$
Note that the vector space $\mathbb Q^n$ is defined as the {\it right vector space} with respect to the right multiplication by scalars from $\mathbb Q$. The same agreement is done for the subspaces $V\subset \mathbb Q^n$.

The group $\U(n,\mathbb K)$ acts transitively on the set $\Gr_k(\mathbb K^n)$ via
$$
A.V=\{Av\in\mathbb K^n:\ v\in V,\ A\in \U(n,\mathbb K)\}.
$$
Fix a plain $\hat V\in \Gr_k(\mathbb K^n)$. Let $K_{\hat V}=\{A\in \U(n,\mathbb K):\ A.\hat V=\hat V\}$ be the isotropy group of $\hat V$. It follows that the Grassmannian $\Gr_k(\mathbb K^n)$ admits the structure of a compact manifold~\cite{Warner94} given by
$$
\Gr_k(\mathbb K^n)=\U(n,\mathbb K)/K_{\hat V},
$$
where $K_{\hat V}$ is isomorphic to $\U(k,\mathbb K)\times\U(n-k,\mathbb K)$. Note that there is a diffeomorphism
$
\Gr_{n-k}(\mathbb K^n)\cong \Gr_k(\mathbb K^n)
$
mapping any $V\in \Gr_{n-k}(\mathbb K^n)$ to its orthogonal complements $V^{\bot}\in \Gr_{k}(\mathbb K^n)$.


\subsubsection{Measure on the Grassmannians}


Let us remind the definition of a measure on the Grassmannian $\Gr_{k}(\mathbb K^n)$. The continuous map
$$
\psi\colon \U(n,\mathbb K)\to \Gr_{k}(\mathbb K^n),
$$
which is the composition of the projection map to the quotient and the diffeomorpism giving the manifold structure to $\Gr_{k}(\mathbb K^n)$ is used to push-forward a measure from $\U(n,\mathbb K)$ to $\Gr_{k}(\mathbb K^n)$. We take for granted that the group $\U(n,\mathbb K)$ carries bi-invariant normalised measure $\lambda$:
$$
\lambda (AU)=\lambda (UA)=\lambda (U),
\qquad\lambda(\U(n,\mathbb K))=1,
$$
for any Borel set $U\subset \U(n,\mathbb K)$ and any isometry $A\in \U(n,\mathbb K)$. The measure $\mu$ on the Borel sets $\Omega\subset\Gr_{k}(\mathbb K^n)$ is defined as a pushforward of $\lambda$:
$$
\mu(\Omega)=(\psi_{\sharp}\lambda)(\Omega)=\lambda(\psi^{-1}(\Omega))=\lambda\{A\in \U(n,\mathbb K):\ V=A.\hat V\in \Omega\}. 
$$
It can be verified that  $\mu$ is normalised and it is $\U(n,\mathbb K)$-invariant.
The converse is also true: a normalised $\U(n,\mathbb K)$-invariant measure on the homogeneous space $\Gr_{k}(\mathbb K^n)$ is a push forward of the normalised Haar measure from $\U(n,\mathbb K)$, see for instance~\cite{MR1333890}.

Note that an $(n-1)$-dimensional sphere $S(0,R)$ in $\mathbb R^n$ can be considered as a particular case of the Grassmanniann $\Gr_{n-1}(\mathbb R^n)$. If we denote by $K_{x}(\mathbb R)$ an isotropy group of a point $x\in S(0,R)$ under the action of $\Orth(n)$, then the following manifolds are diffeomorphic 
$$
S(0,R)\sim \Gr_{n-1}(\mathbb R^n)\sim \Orth(n)/K_{x}(\mathbb R).
$$ 
The push-forward of the normalised measure $\lambda$ on $\Orth(n)$ to $S(0,R)$ coincides with the normalised surface measure on $S(0,R)$, see~\cite[Theorem 3.7]{MR1333890}. 
%


\subsection{Isometry groups of special $H$-type Lie algebras}\label{sec:IsomGroup}


Before we make the construction of orthogonal Grassmannians on some special $H$-type Lie algebras, we describe the group of isometries of these Lie algebras.

Recall the definition of an $H$-type Lie algebra $\h=(\h_1\otimes\h_2,[.\,,.],\langle.\,,.\rangle_{\mathbb R})$ from Section~\ref{sec:Htypegroups}.  
The group of isometries $\Iso(\h)$ of $H$-type Lie algebras were studied in~\cite{Riehm82,Riehm84}. It was shown that 
$$
\Iso(\h)=\{(A,C)\in \Orth(\h_1)\times \Orth(\h_2):\ A^tJ_zA=J_{C^t(z)}\quad\text{for any}\quad z\in\h_2\}.
$$
The group $\Iso(\h)$ is isogenous to the product of the Pin group $\Pin(\h_2,\langle.\,,.\rangle_{\R})$ of the Clifford algebra $\Cl(\h_2,\langle.\,,.\rangle_{\R})$ and a classical group $\mathbb A$. The latter means that there is  
a surjective morphism 
\begin{equation}\label{eq:form-of-isometry}
\begin{array}{ccc}
\phi\colon \Pin(\h_2,\langle.\,,.\rangle_{\R})\times \mathbb A&\to& \Iso(\h)\subset\Orth(\h_1)\times \Orth(\h_2)
\\
\theta=(\alpha,A)&\mapsto &\phi(\theta)=\big(J_{\alpha}\circ A,\kappa(\alpha)\big).
\end{array}
\end{equation}
with a finite kernel of order 2 or 4. Here 
\begin{equation}\label{eq:rho}
\kappa\colon \Pin(\h_2,\langle.\,,.\rangle_{\R})\to \Orth(\h_2)
\end{equation}
is the double cover of the orthogonal group defined by 
$$
\kappa(\alpha)v=\alpha v\alpha^{-1},\quad v\in \h_2,\ \ \alpha\in \Pin(\h_2,\langle.\,,.\rangle_{\R}).
$$
We will not give the full description of the group of isometries, but rather concentrate on the cases when the restriction $\Iso(\h)\vert_{\h_1}$ on $\h_1$ acts transitively on the spheres $S(0,r)\in \h_1$ and the vector space $\h_1$, considered as a Clifford module, is not irreducible. Only in these cases the construction of the Grassmannian on the Lie algebra $\h$ is not trivial. Thus we will consider the following $H$-type Lie algebras:
\begin{itemize}
\item[RH:]{The Heisenberg algebra $\h_{\mathbb R}^{n}=(\mathbb R^{2n+1},[.\,,.],\langle.\,,.\rangle_{\mathbb R})$, $n>1$ with $\h_{\mathbb R}=\h_1\oplus\h_2\cong\R^{2n}\otimes\R$;}
\item[CH:]{The complex Heisenberg algebra $\h_{\mathbb C}^{n}=(\mathbb R^{4n+2},[.\,,.],\langle.\,,.\rangle_{\mathbb R})$, $n>1$ with $\h_{\mathbb C}=\h_1\oplus\h_2\cong\R^{4n}\otimes\R^2$;}
\item[QH:]{The quaternion Heisenberg algebra $\h_{\mathbb Q}^{n}=(\mathbb R^{4n+3},[.\,,.],\langle.\,,.\rangle_{\mathbb R})$, $n>1$ with $\h_{\mathbb Q}=\h_1\oplus\h_2\cong\R^{4n}\otimes\R^3$.}
\end{itemize}
The commutators in all the cases are defined by~\eqref{eq:commutator} and the scalar products are the standard Euclidean products making the direct sums orthogonal. The names real, complex, and quaternion are attached to the names of the algebras by the following reasons. For the Lie algebra $\h_{\mathbb R}^{n}$, the group of real symplectic transformations $\Sp(n,\mathbb R)$ is a subgroup of automorphisms, leaving the center $\h_2$ of the Lie algebra $\h_{\mathbb R}^{n}$ invariant. For the Lie algebra $\h_{\mathbb C}^{n}$, the group of complex symplectic transformations $\Sp(2n,\mathbb C)$ is a subgroup of the Lie algebra automorphisms, leaving the center invariant and also $\h_{\mathbb C}^{n}$ is isomorphic to the complexification of $\h_{\mathbb R}^{n}$. For $\h_{\mathbb Q}^{n}$ the group of quaternion unitary transformations $\Sp(n)$ is a subgroup of the Lie algebra automorphisms, leaving the center invariant. The center $\h_2$ of $\h_{\mathbb Q}^{n}$ is isomorphic to the space of pure imaginary quaternions. 

Now we describe the isometry groups in each case.


\subsubsection{The isometry group $\Iso(\h_{\mathbb R})$}\label{IsohR} 


Recall the definition of the Heisenberg algebra in Section~\ref{sec:Heis-R}. We write $\h_{\mathbb R}=\h_1\oplus\h_2$. Let $\epsilon\in\h_2\cong\mathbb R$ be a vector satisfying $\langle \epsilon,\epsilon\rangle_{\mathbb R}=1$. 
The vector space $\h_1\cong\R^{2n}$ carries natural almost complex structure given by the action of $J_\epsilon$, $\epsilon\in \h_2$. Therefore, the space $\h_1\cong\mathbb R^{2n}$ can be considered as a complex vector space $\h_1\cong\mathbb C^{n}$, where the multiplication by a complex number $\alpha\in\mathbb C$ is defined by 
$
\alpha v=(a+ib)v=av+bJ_{\epsilon} v$, $v\in\h_1$.
The maximal compact subgroup $\mathbb A=\U(n)\subset \Sp(2n,\mathbb R)$ is an isometry on $\h_1\cong\R^{2n}$. 
The corresponding Hermitian form $\langle .\vert.\rangle_{\mathbb C}$ on $\h_1\cong\mathbb C^n$ is defined by making use of the real scalar product $\langle .\,,.\rangle_{\mathbb R}$ by the following
\begin{equation}\label{eq:complexHermitian}
\langle u\vert v\rangle_{\mathbb C}=\langle u,v\rangle_{\mathbb R}-i\langle J_{\epsilon}u,v\rangle_{\mathbb R}.
\end{equation}
The imaginary part of the Hermitian product~\eqref{eq:complexHermitian} defines the symplectic form $\omega\colon\h_1\times\h_1\to\mathbb R$, that is compatible with the real inner product on $\h_1$. Namely
\begin{equation}\label{eq:symplectic-form}
\omega(u,v)=\langle J_{\epsilon}u,v\rangle_{\mathbb R}=
\langle [u,v],\epsilon\rangle_{\mathbb R}.
\end{equation}
which implies
$
\omega(J_{\epsilon}u,J_{\epsilon}v)=\omega(u,v), \quad
\omega(u,J_{\epsilon}v)=\langle u,v\rangle_{\mathbb R}.
$

We use the basis described in Section~\ref{sec:Heis-R} and give the coordinates to $\h^n_{\R}$. Consider the product of two spheres on $\h^n_{\R}\cong \mathbb R^{2n}\times\mathbb R$ centered at the origin
\begin{eqnarray}\label{eq:polysphereHeis}
\mathcal S_{\mathbb R}(0,r_1,r_2)&=&S^{2n-1}(0,r_1)\times S^0(0,r_2)\nonumber
\\
&=&
\{v\in\h_1:\ \langle v,v\rangle_{\mathbb R}=r_1^2\}\times\{z\in\h_2, \langle z,z\rangle_{\mathbb R}=r_2^2\}
\end{eqnarray}
\begin{lemma}\label{lem:transit-hr}
The group $\Iso(\h_{\mathbb R}^n)$ acts transitively on $\mathcal S_{\R}(0,r_1,r_2)$. 
\end{lemma}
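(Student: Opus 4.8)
The plan is to realize the required transitivity through two elementary families of isometries: the unitary group $\U(n)$, which moves points around the sphere $S^{2n-1}(0,r_1)\subset\h_1$ while fixing the center, and a single conjugate-linear reflection that interchanges the two points of $S^0(0,r_2)\subset\h_2$. First I record that every $(A,C)\in\Iso(\h_{\mathbb R}^n)$ acts on $\h_1\oplus\h_2$ by $(v,z)\mapsto(Av,Cz)$; since $A\in\Orth(\h_1)$ and $C\in\Orth(\h_2)$ preserve the respective Euclidean norms, this action preserves $\mathcal S_{\mathbb R}(0,r_1,r_2)=S^{2n-1}(0,r_1)\times S^0(0,r_2)$. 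Because $S^0(0,r_2)=\{r_2\epsilon,-r_2\epsilon\}$ consists of exactly two points, proving transitivity amounts to (i) connecting any two points of a fixed slice $S^{2n-1}(0,r_1)\times\{\eta\}$, and (ii) exhibiting one isometry that swaps the two slices $\eta=\pm r_2\epsilon$.

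For step (i) I would use $\U(n)\times\{\Id\}$. Under the identification $\h_1\cong\mathbb C^n$ the group $\U(n)$ consists of the real-orthogonal maps commuting with $J_\epsilon$, so for such $A$ one has $A^t=A^{-1}$ and $AJ_\epsilon=J_\epsilon A$, hence $A^tJ_\epsilon A=J_\epsilon=J_{\Id^t\epsilon}$, which shows $(A,\Id)\in\Iso(\h_{\mathbb R}^n)$. Since $\U(n)$ acts transitively on the unit sphere of $\mathbb C^n$ and preserves the real radius (because $\langle\cdot,\cdot\rangle_{\mathbb R}=\re\langle\cdot\,\vert\,\cdot\rangle_{\mathbb C}$), the orbit of any $(v,\eta)$ under $\U(n)\times\{\Id\}$ is exactly $S^{2n-1}(0,r_1)\times\{\eta\}$.

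For step (ii) I would construct the center-reversing isometry by hand. In the basis $X_1,\dots,X_n,Y_1,\dots,Y_n$ with $J_\epsilon X_i=Y_i$, $J_\epsilon Y_i=-X_i$, let $A_0$ be the orthogonal involution $A_0X_i=X_i$, $A_0Y_i=-Y_i$. A short computation gives $A_0J_\epsilon=-J_\epsilon A_0$, so that $A_0^tJ_\epsilon A_0=A_0^{-1}J_\epsilon A_0=-J_\epsilon=J_{-\epsilon}$; taking $C_0=-\Id$ on $\h_2\cong\mathbb R$ yields $J_{C_0^t\epsilon}=J_{-\epsilon}$, whence $(A_0,C_0)\in\Iso(\h_{\mathbb R}^n)$. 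The map $C_0$ interchanges $r_2\epsilon$ and $-r_2\epsilon$, while $A_0\in\Orth(\h_1)$ keeps $S^{2n-1}(0,r_1)$ invariant, so $(A_0,C_0)$ carries the slice $\{\eta\}$ onto the slice $\{-\eta\}$.

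Finally I would combine the two constructions: given $(v_1,\eta_1),(v_2,\eta_2)\in\mathcal S_{\mathbb R}(0,r_1,r_2)$, if $\eta_1=\eta_2$ I apply step (i), and if $\eta_1=-\eta_2$ I first apply $(A_0,C_0)$ to land in the slice of $\eta_2$ and then apply a suitable element of $\U(n)\times\{\Id\}$; in either case the composite sends $(v_1,\eta_1)$ to $(v_2,\eta_2)$, proving transitivity. The only genuinely non-formal ingredients are the transitivity of $\U(n)$ on the complex sphere, which is standard, and the existence of the conjugate-linear isometry $(A_0,C_0)$; the latter is the crux, being precisely the statement that $J_\epsilon$ admits an orthogonal anti-commuting involution, equivalently that the structure map $\phi$ of~\eqref{eq:form-of-isometry} produces an isometry lying over the reflection $-\Id\in\Orth(\h_2)$. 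I expect verifying these two memberships in $\Iso(\h_{\mathbb R}^n)$ to be where the real content lies, everything else being bookkeeping.
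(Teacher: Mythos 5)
Your proof is correct, and its overall architecture coincides with the paper's: both arguments reduce transitivity on $\mathcal S_{\R}(0,r_1,r_2)=S^{2n-1}(0,r_1)\times S^0(0,r_2)$ to (a) the transitive action of $\U(n)\times\Id_{\h_2}$ on each horizontal slice and (b) the existence of one isometry lying over $-\Id\in\Orth(\h_2)$ that swaps the two slices. The difference is in how step (b) is realized. The paper produces the slice-swapping isometry from the structure map $\phi$ of~\eqref{eq:form-of-isometry}, taking the element $(J_{\epsilon}\circ A,\kappa(\epsilon))$ and asserting that $\kappa(\epsilon)$ from~\eqref{eq:rho} acts as the reflection on the one-dimensional center (hedging that it might instead be the identity). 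You instead exhibit the isometry by hand: $A_0X_i=X_i$, $A_0Y_i=-Y_i$, $C_0=-\Id$, and verify the defining relation $A_0^tJ_zA_0=J_{C_0^tz}$ directly from the anticommutation $A_0J_\epsilon=-J_\epsilon A_0$. Your computation checks out ($A_0$ is an orthogonal involution, so $A_0^tJ_\epsilon A_0=A_0J_\epsilon A_0=-J_\epsilon A_0^2=-J_\epsilon=J_{-\epsilon}$, and linearity of $z\mapsto J_z$ extends this to all of $\h_2$). This buys you independence from the sign conventions in the $\Pin$-cover: with the untwisted adjoint $\kappa(\alpha)v=\alpha v\alpha^{-1}$ as literally written in~\eqref{eq:rho}, $\kappa(\epsilon)$ fixes $\epsilon$, so the paper's chosen element does not obviously reverse the center, whereas your explicit $(A_0,C_0)$ demonstrably does. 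In that sense your argument is the more self-contained and robust of the two; the only standard facts you invoke are the transitivity of $\U(n)$ on the unit sphere of $\CC^n$ and the membership criteria for $\Iso(\h^n_{\R})$, both of which you verify or cite correctly.
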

\begin{proof}
The map $\kappa(\epsilon)\in \Orth(\h_2)$, defined in~\eqref{eq:rho} acts either as a reflection with respect to the origin of $\h_2$ or as the identity, since $\h_2\cong\mathbb R$. 
%
Let $(x_1,z), (x_2,-z)\in\mathcal S_{\R}(0,r_1,r_2)$. Then we find a map $(J_{\epsilon}\circ A,\kappa(\epsilon))\in\Iso(\h^n_{\mathbb R})$ that maps the point $(x_1,z)$ to $(x_3,-z)=(J_{\epsilon}\circ A x_1,\kappa(\epsilon)z)$, $A\in\U(n)$. Since the subgroup $(\U(n)\times\Id_{\h_2})\subset \Iso(\h^n_{\mathbb R})$ acts transitively on $S^{2n-1}(0,r_1)$ we can find the transformation mapping $(x_3,-z)$ to $(x_2,-z)$. 
\end{proof}

At the end we recall that the basis of left invariant vector fields on the Heisenberg group $H_{\R}^n$ are
\begin{equation}\label{eq:libasisR}
X_{1k}=\frac{\partial}{\partial x_{1k}}-\frac{x_{2k}}{2}\frac{\partial}{\partial \epsilon_1},
\quad
X_{2k}=\frac{\partial}{\partial x_{2k}}+\frac{x_{1k}}{2}\frac{\partial}{\partial \epsilon_2},
\end{equation}
for $k=1,\ldots,n$.

\subsubsection{Isometry groups of H-type Lie algebra $\mathfrak{h}^{n}_{\CC}$}\label{sec:ComplexifiedHeis} 


Let $\h_1\cong\mathbb R^{4n}\subset \mathfrak{h}^{n}_{\mathbb C}$ be the horizontal subspace and $\h_2\cong\mathbb R^2\subset \mathfrak{h}^{n}_{\mathbb C}$ be the vertical subspace. Then $\Cl(\h_2,\langle.\,,.\rangle_{\mathbb R})\cong\Cl(\mathbb R^2)$ contains two elements $\epsilon_1,\epsilon_2$ such that 
$$
\epsilon_1^2=\epsilon_2^2=-1,\quad \langle \epsilon_1,\epsilon_1\rangle_{\mathbb R}=\langle\epsilon_2,\epsilon_2\rangle_{\mathbb R}=1,\quad \langle\epsilon_1,\epsilon_2\rangle_{\mathbb R}=0,
$$ 
and moreover $\epsilon_3=\epsilon_1\epsilon_2$ satisfies
$
\epsilon_3^2=-1$. 

We first consider the case $n=1$, that is $\h_1\cong\mathbb R^{4}\subset \mathfrak{h}^{1}_{\mathbb C}$.
The maps $J_{\epsilon_i}\colon \h_1\to \h_1$, $i=1,2$, are orthogonal transformations. A convenient orthonormal basis for $\h_1$ can be constructed by the following. We choose a vector $v\in\h_1$, such that $\langle v\,,v\rangle_{\R}=1$ and define the orthonormal vectors
\begin{equation}\label{eq:Basis-1}
X_1=v,\quad X_2=J_{\epsilon_1} v,\quad X_3=J_{\epsilon_2} v,\quad X_4=J_{\epsilon_2}J_{\epsilon_1} v.
\end{equation}
The commutation relations according to~\eqref{eq:commutator} are
\begin{equation}\label{eq:ComhC}
[X_1,X_2]=-[X_3,X_4]=\epsilon_1,\quad
[X_1,X_3]=[X_2,X_4]=\epsilon_2.
\end{equation}
We show now that $\h^1_{\CC}$ is the complexified Lie algebra of $\h^1_{\R}$.
Let $X_1,X_2=J_{\epsilon}X_1$ be an orthonormal basis of $\h_1\subset \h_{\R}^1$ and $\h_2=\spn_{\R}\{\epsilon\}$ is the center of the Lie algebra $\h^1_{\R}$. 
Then the complexification $\h_1^{\CC}=\mathbb C\otimes\h_1$ of the vector space $\h_1\subset\h^1_{\R}$ can be described as any of the following direct sums
\begin{eqnarray}\label{eq:Complexif}
\h_1^{\CC}&=&\spn_{\R}\{Z=X_1+iX_2\}\oplus\spn_{\R}\{\bar Z=X_1-iX_2\}\nonumber
\\
&=&
\spn_{\R}\{X_1,X_2\}\oplus\spn_{\R}\{iX_1,iX_2\}
\\
&=&
\spn_{\CC}\{X_1,X_2\}.\nonumber
\end{eqnarray}
The Lie bracket on $\h_1^{\CC}$ is a complex linear Lie bracket on $\h_1$: \begin{equation}\label{eq:commutator-complex}
[X_1,X_2]=-[iX_1,iX_2]=\epsilon,\quad [X_1,iX_2]=[iX_1,X_2]=i\epsilon.
\end{equation}
Thus the center $\h_2^{\CC}$ of the complexified Heisenberg algebra is given by
\begin{eqnarray}\label{eq:Complexif-2}
\h_2^{\CC}=\spn_{\CC}\{\epsilon\}=\spn_{\R}\{\epsilon, i\epsilon\}.
\end{eqnarray}
Recall that the real Heisenberg algebra has the complex structure $J_{\epsilon}\colon\h_1\to\h_1$ defined by $J_{\epsilon}X_1=X_2$, $J_{\epsilon}X_2=-X_1$.
We extend $J_{\epsilon}$ to $\h_1^{\CC}$ by linearity, meaning that $J_{\epsilon}(iu)=iJ_{\epsilon}(u)$ for $u\in\spn_{\R}\{X_1,X_2\}$. We define another complex structure 
$$
J_{i\epsilon}\colon\h_1^{\CC}\to\h_1^{\CC}:\quad
J_{i\epsilon}X_1=iX_2,\quad J_{i\epsilon}X_2=iX_1.
$$
It is easy to check that $J_{\epsilon}J_{i\epsilon}=-J_{i\epsilon}J_{\epsilon}$.
Note that if we denote 
\begin{eqnarray}\label{eq:renotation}
&\epsilon_1=\epsilon,\quad \epsilon_2=i\epsilon,
\\
&X_1=X_1,\quad X_2=J_{\epsilon_1}X_1,\quad
X_3=iX_2=J_{\epsilon_2}X_1,\quad
X_4=iX_1=J_{\epsilon_2}J_{\epsilon_1}X_1,\nonumber
\end{eqnarray}
then we recover basis~\eqref{eq:Basis-1} and commutation relations~\eqref{eq:ComhC} of $\h_{\CC}^1$.

We show now that the horizontal space $\h_1\subset\h_{\CC}^1$ of the complexified Heisenberg algebra is a complex symplectic space.  
The real symplectic form $\omega$ defined in~\eqref{eq:symplectic-form} can be extended to the complex symplectic form $\omega^{\CC}\colon \h_1\times{\h_1}\to\CC$, $\h_1\subset\h_{\CC}^1$, by
\begin{equation}\label{eq:complexSymplectic}
\omega^{\CC}(u+iv,x+iy)=\omega(u,x)+\omega(v,y)+i\big(\omega(v,x)+\omega(y,u)\big).
\end{equation}
Here we consider $\h_1\subset\h_{\CC}^1$ as a complex space~\eqref{eq:Complexif}. Then $\omega^{\CC}$ satisfies
$$
\omega^{\CC}(z_1,z_2)=-\overline{\omega^{\CC}(z_2,z_1)},
$$
$$
\omega^{\CC}(\alpha z_1,z_2)=\alpha\omega^{\CC}(z_1,z_2),\quad
\omega^{\CC}(z_1,\alpha z_2)=\bar\alpha\omega^{\CC}(z_1,z_2).
$$
The symplectic basis  $X_1,X_2=J_{\epsilon_1}X_1$ over $\R$ for the real symplectic vector space $\h_1\subset\h_{\R}^1$ is the symplectic basis over $\CC$ for the complex symplectic vector space $\h_1\subset\h^1_{\CC}$. 

The multidimensional algebra $\mathfrak{h}^{n}_{\mathbb C}$ as a vector space is the Cartesian product
\begin{equation}\label{eq:direct-product}
\mathfrak{h}^{n}_{\mathbb C}=\big(\h_1^{\CC}\big)_1\times\ldots\times\big(\h_1^{\CC}\big)_n\times\h_2^{\CC},
\end{equation} 
where $\h_1^{\CC}$ is defined in~\eqref{eq:Complexif} and $\h_2^{\CC}$ is defined in~\eqref{eq:Complexif-2}.
The commutation relations are given by~\eqref{eq:commutator-complex}, if the vectors belong to the same slott $\big(\h_1^{\CC}\big)_k$ in the Cartesian  product~\eqref{eq:direct-product} and zero otherwise. The real scalar product $\langle.\,,.\rangle_{\mathbb R}$ is extended to the Cartesian product~\eqref{eq:direct-product} by making the different slots orthogonal. 
The multidimensional algebra $\mathfrak{h}^{n}_{\mathbb C}$ has real topological dimension $4n+2$ and the Hausdorff dimension $4n+4$. The corresponding left invariant vector fields on the complexified Heisenberg group $H_{\CC}^n$ are 
\begin{equation}\label{eq:libasisC}
\begin{array}{ccccccc}
X_{1k}&=&\frac{\partial}{\partial x_{1k}}-\frac{x_{2k}}{2}\frac{\partial}{\partial \epsilon_1}
-\frac{x_{3k}}{2}\frac{\partial}{\partial \epsilon_2},
\\
X_{2k}&=&\frac{\partial}{\partial x_{2k}}+\frac{x_{1k}}{2}\frac{\partial}{\partial \epsilon_1}
-\frac{x_{4k}}{2}\frac{\partial}{\partial \epsilon_2},
\\
X_{3k}&=&\frac{\partial}{\partial x_{3k}}+\frac{x_{4k}}{2}\frac{\partial}{\partial \epsilon_1}
+\frac{x_{1k}}{2}\frac{\partial}{\partial \epsilon_2},
\\
X_{4k}&=&\frac{\partial}{\partial x_{4k}}-\frac{x_{3k}}{2}\frac{\partial}{\partial \epsilon_1}
+\frac{x_{2k}}{2}\frac{\partial}{\partial \epsilon_2},
\end{array}
\qquad k=1,\ldots, n.
\end{equation}

The subgroup $\mathbb A\cong\Sp(n)$ of the isometry group $\Iso(\h^n_{\mathbb C})$ preserves the quaternion Hermitian product $\langle.\vert.\rangle_{\mathbb Q}$ on $\h_1\subset\h^n_{\CC}$, defined by
\begin{equation}\label{eq:quaternionHermitian}
\langle z\vert w\rangle_{\mathbb Q}=\langle z, w\rangle_{\mathbb R}
-i\langle J_{\epsilon_1}z, w\rangle_{\mathbb R}
-j\langle J_{\epsilon_2}z, w\rangle_{\mathbb R}
-k\langle J_{\epsilon_2}J_{\epsilon_1}z, w\rangle_{\mathbb R}.
\end{equation}
The group $\mathbb A\cong\Sp(n)$ acts transitively on the unit sphere $S^{4n-1}\subset\h_1\cong \mathbb C^{2n}$, where $\h_1\subset \mathfrak{h}^{n}_{\mathbb C}$. 


\subsubsection{Isometry groups of H-type Lie algebra $\mathfrak{h}^{n}_{\mathbb Q}$} 


Since $\mathbb R^3\cong \h_2\subset \mathfrak{h}^{1}_{\mathbb Q}$, there is three linearly independent length one elements $\epsilon_i\in \Cl(\mathbb R^3,\langle.\,,.\rangle_{\mathbb R})$, $i=1,2,3$, satisfying the quaternion relations
\begin{equation}\label{eq:quaternion-relation}
\epsilon_1^2=\epsilon_3^2=\epsilon_3^2=\epsilon_1\epsilon_2\epsilon_3=-1.
\end{equation}
We introduce a quaternion structure on $\mathbb R^4\cong \h_1\subset \mathfrak{h}^{1}_{\mathbb Q}$ by defining the multiplication by a quaternion number $q=a+ib+jc+kd\in\mathbb Q$ as follows
$$
q v=(a+ib+jc+kd)v=av+bJ_{\epsilon_1} v+cJ_{\epsilon_2} v+dJ_{\epsilon_3} v,\quad v\in\h_1.
$$
The quaternion Hermitian product is 
\begin{equation}\label{eq:Hermitian-product1}
\langle u\vert v\rangle_{\mathbb Q}=\langle u,v\rangle_{\mathbb R}-i\langle J_{\epsilon_1}u,v\rangle_{\mathbb R}-j\langle J_{\epsilon_2}u,v\rangle_{\mathbb R}-k\langle J_{\epsilon_3}u,v\rangle_{\mathbb R}.
\end{equation}
To construct an orthonormal basis we choose $v\in \h_1$ with $\langle v,v\rangle_{\mathbb R}=1$
and set
\begin{equation}\label{eq:Basis-2}
X_1=v,\quad X_2=J_{\epsilon_1} v,\quad X_3=J_{\epsilon_2} v,\quad X_4=J_{\epsilon_3} v.
\end{equation}
The commutation relations are
\begin{eqnarray}\label{eq:commutator-quaternion}
&[X_1,X_2]=-[X_3,X_4]=\epsilon_1,\quad
[X_1,X_3]=[X_2,X_4]=\epsilon_2,
\\
&[X_1,X_4]=-[X_2,X_3]=\epsilon_3.\nonumber
\end{eqnarray}
The space $\h_1=\spn_{\mathbb R}\{X_1,X_2,X_3,X_4\}\subset \h^1_{\CC}$ is isomorphic and isometric to the 1-dimensional quaternion space endowed by the quaternion Hermitian product~\eqref{eq:Hermitian-product1}. 

We notice the relation between $\h^1_{\CC}$ and $\h^1_{\mathbb Q}$. Due to~\eqref{eq:quaternion-relation}, the action of $J_{\epsilon_3}$ can be expressed as $J_{\epsilon_3}=J_{\epsilon_1}J_{\epsilon_2}$ and therefore the space $\h_1\subset\h^1_{\mathbb Q}$ can be considered as a complex symplectic space with respect to $\omega^{\CC}$ in~\eqref{eq:complexSymplectic}. In addition to that we add a real symplectic form 
$
\omega_3(u,v)=\langle J_{\epsilon_3}u,v\rangle_{\R}.
$

The multidimensional algebra $\mathfrak{h}^{n}_{\mathbb Q}$ as a vector space is the Cartesian product
\begin{equation}\label{eq:direct-product}
\mathfrak{h}^{n}_{\mathbb Q}=\big(\h_1\big)_1\times\ldots\times\big(\h_1\big)_n\times\h_2.
\end{equation} 
The commutation relations are given by~\eqref{eq:commutator-quaternion}, if the vectors belong to the same slott $\big(\h_1\big)_k$ in the Cartesian  product~\eqref{eq:direct-product} and zero otherwise.
The multidimensional algebra $\mathfrak{h}^{n}_{\mathbb Q}$ has real topological dimension $4n+3$ and the Hausdorff dimension $4n+6$. 
We note that the Clifford module $\big(\h_1\big)_1\times\ldots\times\big(\h_1\big)_n$ in this case is assumed to be isotypic, that corresponds the fact that the product $J_{\epsilon_1}J_{\epsilon_2}J_{\epsilon_3}$ acts as minus identity on every slot $\big(\h_1\big)_k$. The subgroup $\mathbb A\cong\Sp(n)$ of the isometry group $\Iso(\h_{\mathbb Q}^n)$ preserves the quaternion Hermitian product on $\h_1\subset\h^n_{\mathbb Q}$, defined in~\eqref{eq:Hermitian-product1}. 
The corresponding basis of left invariant vector fields on the group $H_{\mathbb Q}^n$ is 
\begin{equation}\label{eq:libasisQ}
\begin{array}{ccccccc}
X_{1k}&=&\frac{\partial}{\partial x_{1k}}-\frac{x_{2k}}{2}\frac{\partial}{\partial \epsilon_1}
-\frac{x_{3k}}{2}\frac{\partial}{\partial \epsilon_2}
-\frac{x_{4k}}{2}\frac{\partial}{\partial \epsilon_3}
\\
X_{2k}&=&\frac{\partial}{\partial x_{2k}}+\frac{x_{1k}}{2}\frac{\partial}{\partial \epsilon_1}
-\frac{x_{4k}}{2}\frac{\partial}{\partial \epsilon_2}
+\frac{x_{3k}}{2}\frac{\partial}{\partial \epsilon_3},
\\
X_{3k}&=&\frac{\partial}{\partial x_{3k}}+\frac{x_{4k}}{2}\frac{\partial}{\partial \epsilon_1}
+\frac{x_{1k}}{2}\frac{\partial}{\partial \epsilon_2}
-\frac{x_{2k}}{2}\frac{\partial}{\partial \epsilon_3},
\\
X_{4k}&=&\frac{\partial}{\partial x_{4k}}-\frac{x_{3k}}{2}\frac{\partial}{\partial \epsilon_1}
+\frac{x_{2k}}{2}\frac{\partial}{\partial \epsilon_2}
+\frac{x_{1k}}{2}\frac{\partial}{\partial \epsilon_3},
\end{array}
\qquad k=1,\ldots, n.
\end{equation}

We use the bases~\eqref{eq:Basis-1} and~\eqref{eq:Basis-2} to identify $\h^n_{\mathbb C}$ with $\mathbb R^{4n}\times\mathbb R^2$ and $\h^n_{\mathbb Q}$ with $\mathbb R^{4n}\times\mathbb R^3$. Define the product of spheres
\begin{eqnarray*}
\mathcal S_{\mathbb C}(0,r_1,r_2)&=&\{(x,z)\in\h^n_{\mathbb C}:\ x\in S^{4n-1}(0,r_1)\subset\h_1, z\in S^1(0,r_2)\subset \h_2\}
\\
&\cong& S^{4n-1}(0,r_1)\times S^1(0,r_2),\quad \mathcal S_{\mathbb C}(0,r_1,r_2)\subset \h^{n}_{\mathbb C}
\end{eqnarray*}
and 
\begin{eqnarray*}
\mathcal S_{\mathbb Q}(0,r_1,r_2)&=&\{(x,z)\in\h^n_{\mathbb Q}:\ x\in S^{4n-1}(0,r_1)\subset\h_1, z\in S^2(0,r_2)\subset \h_2\}
\\
&\cong &
S^{4n-1}(0,r_1)\times S^2(0,r_2),\quad \mathcal S_{\mathbb Q}(0,r_1,r_2)\subset \h^{n}_{\mathbb Q}.
\end{eqnarray*}

\begin{lemma}\label{lem:transit-hr-1}
The groups of isometries $\Iso(\h_{\mathbb C}^n)$ and $\Iso(\h_{\mathbb Q}^n)$ act transitively on the respective products of spheres $\mathcal S_{\mathbb C}(0,r_1,r_2)$ and $\mathcal S_{\mathbb Q}(0,r_1,r_2)$. 
\end{lemma}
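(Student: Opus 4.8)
The plan is to follow the two-step scheme of Lemma~\ref{lem:transit-hr}, now exploiting the isogeny $\phi\colon\Pin(\h_2,\langle.\,,.\rangle_{\R})\times\mathbb A\to\Iso(\h)$ recorded in~\eqref{eq:form-of-isometry}. Fix two points $(x_1,z_1),(x_2,z_2)$ lying on $\mathcal S_{\mathbb C}(0,r_1,r_2)$ (respectively on $\mathcal S_{\mathbb Q}(0,r_1,r_2)$), so that $x_i\in S^{4n-1}(0,r_1)\subset\h_1$ and $z_i\in\h_2$ with $\langle z_i,z_i\rangle_{\R}=r_2^2$. I want to exhibit a single isometry carrying $(x_1,z_1)$ to $(x_2,z_2)$, built as the composition of one isometry that adjusts the central component and one that adjusts the horizontal component.

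First I would move the center. The map $\kappa\colon\Pin(\h_2,\langle.\,,.\rangle_{\R})\to\Orth(\h_2)$ of~\eqref{eq:rho} is a surjective double cover, and $\SO(\h_2)$ already acts transitively on the sphere $S^{d}(0,r_2)\subset\h_2$, where $d=1$ for $\h_{\mathbb C}^n$ and $d=2$ for $\h_{\mathbb Q}^n$. Hence there is $\alpha\in\Pin(\h_2,\langle.\,,.\rangle_{\R})$ with $\kappa(\alpha)z_1=z_2$, and the isometry $\phi(\alpha,\Id)=(J_\alpha,\kappa(\alpha))$ sends $(x_1,z_1)$ to $(J_\alpha x_1,z_2)$. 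Here one must check that $J_\alpha$ is orthogonal on $\h_1$: by~\eqref{eq:commutator} each $J_{\epsilon_i}$ is skew-symmetric, and by~\eqref{eq:J2} it squares to $-\Id_{\h_1}$, so each $J_{\epsilon_i}$ is orthogonal; since $\alpha$ is a product of unit vectors of $\h_2$, the operator $J_\alpha$ is a product of such maps and therefore preserves $\langle.\,,.\rangle_{\R}$ on $\h_1$. In particular $J_\alpha x_1\in S^{4n-1}(0,r_1)$.

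Next I would correct the horizontal part while fixing the center. Setting the $\Pin$-factor to the unit $1$ in~\eqref{eq:form-of-isometry} gives $\phi(1,A)=(A,\Id_{\h_2})$ for $A\in\mathbb A\cong\Sp(n)$, since $J_1=\Id_{\h_1}$ and $\kappa(1)=\Id_{\h_2}$. These isometries fix $\h_2$ pointwise and act on $\h_1$ through the $\Sp(n)$-action, which is transitive on $S^{4n-1}(0,r_1)$ for both $\h^n_{\mathbb C}$ and $\h^n_{\mathbb Q}$. Choosing $A\in\Sp(n)$ with $A(J_\alpha x_1)=x_2$, the isometry $\phi(1,A)$ sends $(J_\alpha x_1,z_2)$ to $(x_2,z_2)$. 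Composing the two isometries, which is again an element of $\Iso(\h)$ since the latter is a group, carries $(x_1,z_1)$ to $(x_2,z_2)$ and proves transitivity.

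I do not expect a genuine obstacle: once the two transitivity inputs, the $\Sp(n)$-action on $S^{4n-1}(0,r_1)$ and the $\SO(\h_2)$-action on $S^{d}(0,r_2)$, are in hand, the argument is essentially formal. The only step needing real care is the verification that the horizontal component $J_\alpha$ of the center-moving isometry is orthogonal, so that it preserves the horizontal sphere and the second step can be applied; this is exactly where the $H$-type identities~\eqref{eq:J2} and~\eqref{eq:commutator} are used.
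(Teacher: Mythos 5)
Your proof is correct and follows the same two-step scheme the paper uses for the real case in Lemma~\ref{lem:transit-hr}: first adjust the central component by an isometry $\phi(\alpha,\Id)=(J_\alpha,\kappa(\alpha))$ coming from the $\Pin$ factor, then fix the center and move the horizontal component by the transitive action of $\mathbb A\cong\Sp(n)$ on $S^{4n-1}(0,r_1)$. The paper actually states Lemma~\ref{lem:transit-hr-1} without proof, leaving it as the evident analogue of Lemma~\ref{lem:transit-hr}; your write-up supplies exactly that argument, and the one point genuinely needing care --- that $J_\alpha$ is orthogonal on $\h_1$, so the center-moving isometry preserves the horizontal sphere --- is verified correctly from~\eqref{eq:J2} and~\eqref{eq:commutator}.
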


\subsection{Grassmannians on special $H$-type Lie algebras}\label{sec:HGr}


We will construct orthogonal Grassmannians on the H-type Lie algebras mentioned in Section~\ref{sec:IsomGroup}. In these cases the isometry groups act transitively on the spheres $\mathcal S_{\mathbb K}(0,r_1,r_2)$, $\mathbb K=\mathbb R, \mathbb C$, or $\mathbb Q$. This allows to define the measure on the Grassmannians. Moreover, the transitive action permits to realise the Grassmannians as orbit spaces under the action of the isometry groups. 
Note that the spaces $\mathbb R^n$, $\mathbb C^n$, and $\mathbb Q^n$ are Abelian algebras with respect to the summation. Any $k$ dimensional vector subspace $V$ is a subalgebra that has $n-k$ dimensional orthogonal complement that is also a subalgebra. 
This property is not trivial for the non-commutative subalgebras and therefore we restrict ourself to the consideration of {\it orthogonally complemented} Grassmannians.

\subsubsection{Orthogonally complemented Grassmannians}

A subalgebra $V\subset \h^n_{\mathbb K}$ is called {\it homogeneous} if it is invariant under the action of dilation~\eqref{eq:delta}.
In the following definition we use the inner product from the definition of $H$-type Lie algebra $\h=(\h_1\oplus\h_2,[.\,,.],\langle.\,,.\rangle_\mathbb R)$. 
\begin{definition} We say that a homogeneous subalgebra $V\subset \h^n_{\mathbb K}$ is an orthogonally complemented homogeneous subalgebra of $\h^n_{\mathbb K}$ if the orthogonal complement
$V^\perp$ is a homogeneous subalgebra  of $\h^{n}_{\K}$.  
\end{definition}

\begin{lemma}\label{lem: orth.compl.Heis} Let $V\subset \h^{n}_{\K}$ be an orthogonally complemented  homogeneous subalgebra. Then:
\begin{itemize}
\item[i)] In the case $\h^{n}_{\K}$ for $\K=\R,\HH$ we have
\begin{itemize} 
\item[i-1)]if $\dim_{\R} V \le n$, then $V\subset \h_1$ (and hence $V$ is commutative);
\item[i-2)]if $\dim_{\R} V > n$, then $\h_2 \subset V$.
\end{itemize}
\item[ii)] In the case $\h^{n}_{\CC}$ we have
\begin{itemize} 
\item[ii-1)]if $\dim_{\R} V \le 2n$, then $V\subset \h_1$ (and hence $V$ is commutative);
\item[ii-2)] if $\dim_{\R} V > 2n$, then $\h_2 \subset V$. 
\end{itemize}
\end{itemize}
\end{lemma}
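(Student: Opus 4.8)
The plan is to push the entire statement down to linear algebra on $\h_1$ encoded by the maps $J_z$. Since $V$ is $\delta_\lambda$-invariant it must respect the grading, so I would first write $V=V_1\oplus V_2$ with $V_1=V\cap\h_1$, $V_2=V\cap\h_2$, and, using that $\h_1\oplus\h_2$ is orthogonal, split the complement as $V^\perp=V_1^\perp\oplus V_2^\perp$ (the complements taken inside $\h_1$ and inside $\h_2$). Setting $\omega_z(u,v):=\langle J_zu,v\rangle_{\mathbb R}$, antisymmetry of the bracket together with the $H$-type identity $\langle J_zu,v\rangle_{\mathbb R}=\langle z,[u,v]\rangle_{\mathbb R}$ shows each $\omega_z$ is skew, hence a symplectic form on $\h_1$ for $z\neq0$ (as $J_z$ is invertible). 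The subalgebra condition $[V_1,V_1]\subseteq V_2$ then reads: $V_1$ is $\omega_z$-isotropic for every $z\in V_2^\perp$; dually, the requirement that $V^\perp$ be a subalgebra reads: $V_1^\perp$ is $\omega_z$-isotropic for every $z\in V_2$. All assertions of the lemma become statements about these two families of isotropy conditions.

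Next I would settle the two extreme positions of $V_2$, which already produce the numerical thresholds. If $V_2=\{0\}$ then $V=V_1$ is $\omega_z$-isotropic for all $z\in\h_2$: in the real case this is a single symplectic form, giving $\dim_{\mathbb R}V_1\le n$; in the quaternion case it says $V_1\perp J_{\epsilon_k}V_1$ for $k=1,2,3$, and the quaternion relations give $\langle J_{\epsilon_a}u,J_{\epsilon_b}v\rangle_{\mathbb R}=\mp\langle u,J_{\epsilon_c}v\rangle_{\mathbb R}=0$, so $V_1,J_{\epsilon_1}V_1,J_{\epsilon_2}V_1,J_{\epsilon_3}V_1$ are mutually orthogonal in $\h_1\cong\mathbb R^{4n}$ and $\dim_{\mathbb R}V_1\le n$; in the complex case isotropy for a single $\omega_{\epsilon_1}$ already forces $\dim_{\mathbb R}V_1\le 2n$. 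The same bounds applied to $V_1^\perp$ when $V_2=\h_2$ give $\dim_{\mathbb R}V_1^\perp\le n$ (resp. $\le 2n$), so $\dim_{\mathbb R}V=\dim_{\mathbb R}V_1+\dim_{\mathbb R}\h_2$ strictly exceeds the stated threshold. This yields the dichotomy exactly as stated whenever $V_2$ is $\{0\}$ or all of $\h_2$.

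The core of the argument, and the step I expect to be the main obstacle, is to exclude every intermediate position $\{0\}\subsetneq V_2\subsetneq\h_2$; this can only arise in the complex case ($\dim_{\mathbb R}V_2=1$) and the quaternion case ($\dim_{\mathbb R}V_2\in\{1,2\}$), the real center being one-dimensional. Here the plan is to play the two isotropy statements against each other: choosing unit $z\in V_2$ and $z'\in V_2^\perp$, the $\omega_z$-isotropy of $V_1^\perp$ forces $\dim_{\mathbb R}V_1\ge\tfrac12\dim_{\mathbb R}\h_1$ and the $\omega_{z'}$-isotropy of $V_1$ forces $\dim_{\mathbb R}V_1\le\tfrac12\dim_{\mathbb R}\h_1$, so both are maximal isotropic and $V_1^\perp=J_{z'}V_1$; substituting this into the $\omega_z$-isotropy of $V_1^\perp$ and simplifying with the Clifford relation $J_{z'}^{-1}J_zJ_{z'}=-J_z$ promotes $V_1$ to being $\omega_z$-isotropic as well. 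In the quaternion case the presence of the third central generator $\epsilon_3$ lets one repeat this until $V_1$ is isotropic for all of $\omega_{\epsilon_1},\omega_{\epsilon_2},\omega_{\epsilon_3}$, i.e. totally real, whence $\dim_{\mathbb R}V_1\le n$ contradicting $\dim_{\mathbb R}V_1=2n$; this closes $\K=\HH$. I expect the genuine difficulty to concentrate in the complex intermediate case $\dim_{\mathbb R}V_2=1$, where the promotion only reaches isotropy for the two central forms and leaves $V_1$ invariant under the non-central complex structure $J_{\epsilon_1}J_{\epsilon_2}$ rather than immediately contradictory; disposing of this is the delicate point, and it is where I would invest the effort, normalizing $V_2$ to a coordinate direction of $\h_2$ beforehand via the transitivity of $\Iso(\h^n_{\mathbb K})$ on the sphere products $\mathcal S_{\mathbb K}(0,r_1,r_2)$ from Lemmas~\ref{lem:transit-hr} and~\ref{lem:transit-hr-1} and exploiting the isotypicity of the underlying Clifford module.
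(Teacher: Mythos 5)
Your reduction of the statement to the two families of isotropy conditions ($V_1$ is $\omega_z$-isotropic for $z\in V_2^{\perp}$, $V_1^{\perp}$ is $\omega_z$-isotropic for $z\in V_2$) is correct, and it is a cleaner and more principled route than the paper's own argument, which only \emph{constructs} complemented pairs out of isotropic subspaces $W'$ and then asserts without justification that ``the construction above gives all possible complementary subalgebras.'' Your treatment of the extreme positions $V_2=\{0\}$ and $V_2=\h_2$ is right, and so is the promotion step: for unit $z\in V_2$, $z'\in V_2^{\perp}$ one gets $\dim_{\R}V_1=\tfrac12\dim_{\R}\h_1$, $V_1^{\perp}=J_{z'}V_1$, and $\omega_z(J_{z'}u,J_{z'}v)=-\omega_z(u,v)$, so $\omega_z$-isotropy of $V_1^{\perp}$ transfers to $V_1$. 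This genuinely closes case i) for $\K=\R$ (no intermediate $V_2$ exists) and for $\K=\HH$ (isotropy for all three central forms makes $V_1,J_{\epsilon_1}V_1,J_{\epsilon_2}V_1,J_{\epsilon_3}V_1$ mutually orthogonal, forcing $\dim_{\R}V_1\le n$, against $\dim_{\R}V_1=2n$).

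The gap you flag in the complex intermediate case $\dim_{\R}V_2=1$ is, however, not a delicate point to be disposed of: it cannot be closed, because that case actually occurs and part ii-2) is false as written. Take $V_1=\spn_{\R}\{X_{1k},X_{4k}:\ k=1,\dots,n\}$ in the basis~\eqref{eq:Basis-1} and $V=V_1\oplus\spn_{\R}\{\epsilon_1\}$. From the relations~\eqref{eq:ComhC} one checks that $\omega_{\epsilon_2}$ vanishes on $V_1$ (since $J_{\epsilon_2}X_{1k}=X_{3k}$ and $J_{\epsilon_2}X_{4k}=-X_{2k}$) and $\omega_{\epsilon_1}$ vanishes on $V_1^{\perp}=\spn_{\R}\{X_{2k},X_{3k}\}$, so both $V$ and $V^{\perp}=V_1^{\perp}\oplus\spn_{\R}\{\epsilon_2\}$ are homogeneous subalgebras; yet $\dim_{\R}V=2n+1>2n$ while $\h_2\not\subset V$. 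For $n=1$ this is precisely the configuration $\hat V=\spn_{\R}\{X_1,X_4,\epsilon_1\}$ that the paper itself exhibits in the example following Corollary~\ref{independent}, in direct contradiction with the lemma. Your analysis, pushed to its end, shows that the correct statement for $\h^n_{\CC}$ is a trichotomy: $V\subset\h_1$ when $\dim_{\R}V\le 2n$; $\h_2\subset V$ when $\dim_{\R}V\ge 2n+2$; and an exceptional class with $\dim_{\R}V=2n+1$, $\dim_{\R}V_2=1$ and $V_1$ Lagrangian for both central forms (equivalently, $V_1$ invariant under $J_{\epsilon_1}J_{\epsilon_2}$). Do not invest effort in deriving a contradiction from that invariance; instead record the counterexample and amend the statement, noting that the later results relying on ii-2) must be checked against the exceptional class.
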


\begin{proof} We start from the case $\h^{n}_{\R}$. We sketch the construction of the complementary homogeneous subalgebras. The horizontal vector space $\h_1$ is a symplectic vector space with the symplectic form~\eqref{eq:symplectic-form}, see Section~\ref{IsohR}. Let $W\subset h_1$ be a vector space of dimension $d=\dim_{\R}(W)\geq \frac{1}{2}\dim_{\mathbb R}(\h_1)=n$. It was shown in~\cite[Lemma 3.26]{FrSerSC07}, that there is a vector space $W'\subset \h_1$ such that $W'\oplus W=\h_1$ and $\omega(v,w)=0$ for all $v,w\in W'$. The relation between the symplectic form and the commutation relation shows that $W'$ is a commutative subalgebra of $\h_1$. We choose an orthonormal basis $\{e_1,\ldots,e_{d'}\}$, $d'=\dim(W')\leq n$ for $W'$ and extend it to an orthonormal basis 
$$
\{e_1,\ldots,e_{d'}, e_{d'+1},\ldots,e_{n}, f_1=J_{\epsilon}e_1,\ldots,f_n=J_{\epsilon}e_n\}.
$$
We denote 
$$V=W',\quad 
V'=\spn\{e_{d'+1},\ldots,e_{n}, f_1=J_{\epsilon}e_1,\ldots,f_n=J_{\epsilon}e_n\}\oplus\spn\{\epsilon\}.
$$
Then it is easy to see that $V\oplus V'$ are orthogonally complemented subalgebras, satisfying hypothesis of Lemma~\ref{lem: orth.compl.Heis}.
Note that the construction above gives all possible complementary subalgebras. 

We turn to $\h^n_{\CC}$ and will consider $\h_1\subset \h^n_{\CC}$ as a complex symplectic space with the complex symplectic form $\omega^{\CC}$. We have $\dim_{\mathbb C}(\h_1)=2n$ and $n$ is even. We can show that for any $W$ such that $\dim_{\CC}(W)\geq n$ there is a vector space $W'\subset \h_1$ satisfying $W'\oplus W=\h_1$ and $\omega^{\mathbb C}(v,w)=0$ for all $v,w\in W'$. The arguments are the same as at the beginning of the proof, since the arguments do not depend on the choice of the fields $\R$ or $\CC$, but only on the construction of the basis. 

Thus, by the construction, the vector space $W'$ is a complex isotropic vector space with respect to the complex symplectic form $\omega^{\CC}$ with $\dim_{\mathbb C}W'\leq n$. It contains the isotropic subspace $\tilde W'$ with respect to the real symplectic form $\omega$ and the complex span of $\tilde W'$ coincides with $W'$. The vector space $\tilde W'$ is a commutative real subalgebra and its complexification $W'$ will be a commutative complex subalgebra of the complex Heisenberg algebra $\h^n_{\CC}$. By making use notation~\eqref{eq:renotation} we conclude that $\dim_{\mathbb R}W'\leq 2n$, and $W'$ is a commutative subalgebra of $\h^1_{\CC}$ considered as a real Lie algebra. 

We consider two cases: $\dim_{\mathbb R}W'=2k\leq n$ and $\dim_{\mathbb R}W'=2k> n$. Let $\dim_{\mathbb R}W'=2k\leq n$. We find a real commutative orthonormal basis $\{e_1,\ldots, e_{2k}\}$ for $W'$ and extend it to an orthonormal basis $\{e_1,\ldots, e_{n}\}$. Then we denote
$
V=W'$, and set
$$
V'=\spn_{\R}\{e_{2k+1},\ldots, e_{n}, J_{\epsilon_1}e_i,J_{\epsilon_2}e_i,
J_{\epsilon_2}J_{\epsilon_1}e_i;\ i=1,\ldots,n\}\oplus\spn_{\R}\{\epsilon_1,\epsilon_2\}.
$$

In the case $\dim_{\mathbb R}W'=2k> n$, we choose orthonormal vectors $\{e_1,\ldots, e_{n}\}$ in $W'$ and extend them to the orthonormal basis 
$$
\{e_1,\ldots, e_{n}, J_{\epsilon_2}J_{\epsilon_1}e_1,\ldots,J_{\epsilon_2}J_{\epsilon_1}e_n\}
$$
of the maximal commutative subalgebra of $\h^1_{\CC}$. Without loss of generality we can assume that 
$$
\{e_1,\ldots, e_{n}, J_{\epsilon_2}J_{\epsilon_1}e_1,\ldots,J_{\epsilon_2}J_{\epsilon_1}e_p\},\quad n+p=2k
$$
is an orthonormal basis for $W'$. Now we denote $V=W'$ and set
$$
V'=\spn_{\R}\{J_{\epsilon_1}e_i,J_{\epsilon_2}e_i;\ i=1,\ldots,n,\ 
J_{\epsilon_2}J_{\epsilon_1}e_{p+1},\ldots, J_{\epsilon_2}J_{\epsilon_1}e_{n}\}\oplus\spn_{\R}\{\epsilon_1,\epsilon_2\}.
$$
We recall that 
$$
\langle e_i, J_{\epsilon_1}J_{\epsilon_2}e_i\rangle_{\mathbb R}=-\langle J_{\epsilon_1}e_i, J_{\epsilon_2}e_i\rangle_{\mathbb R}=\langle\epsilon_1,\epsilon_2\rangle_{\R}\langle e_i,e_i\rangle_{\R}=0
$$
because of the orthogonality of vectors $\epsilon_1,\epsilon_2$. Also  
$
\langle e_i, J_{\epsilon_1}J_{\epsilon_2}e_j\rangle_{\mathbb R}=0$ for $i\neq j$
because of the orthogonal decomposition~\eqref{eq:direct-product}.

The last case concerns with $\h^n_{\HH}$. We start from lower dimensional subalgebra $\h^1_{\HH}$. We choose a vector $v\in \h_1\subset \h^1_{\HH}$ with $\langle v,v\rangle_{\mathbb R}=1$ and define an orthonormal basis of the real Heisenberg algebra
$$
X_1=v,\quad X_2=J_{\epsilon_1}v,\quad ,\epsilon_1
$$
We use $\epsilon_2$ and construct the symplectic complex space $\h_1^{\CC}$ as in the previous case. Then the constructed multidimensional complexified Heisenberg algebra satisfies the commutation relations of the first line in~\eqref{eq:commutator-quaternion}. As in the previous case we find a space $W'$ such that $\omega^{\mathbb C}(u,v)=0$ for all $u,v\in W'$. Since $W'$ is a complex vector space it has even real dimension $\dim_{\mathbb R}(W')=2k\leq 2n$ and therefore we can define one more real symplectic form on $W'$ (considered as a real vector space) by
$$
\omega_{3}(u_1,u_2)=\langle J_{\epsilon_3}u_1,u_2\rangle_{\mathbb R},\quad u_1,u_2\in W'.
$$
Then we set $V'=W'\cap L(W')$, where $L(W')$ is the Lagrangian subspace of the real symplectic space $(W',\omega_3)$. We obtain $\dim_{\R}(V)=p\leq k\leq n$ and it is by construction a commutative subspace of $\h_{\HH}^1$. Now we choose an orthonormal basis 
$
\{e_1,\ldots,e_p\}$ for $V$
and complement it to an orthonormal basis 
\begin{equation}\label{eq:BBB}
\{e_1,\ldots,e_p,e_{p+1},\ldots,e_{n}\}. 
\end{equation}
In the last step we extend~\eqref{eq:BBB} to an orthonormal basis of $\h_1\subset\h_{\HH}^n$ by 
$$
\{e_1,\ldots,e_n,J_{\epsilon_l}e_{1},\ldots,J_{\epsilon_l}e_n;\ \l=1,2,3\}. 
$$
We have obtained the orthogonally complemented subalgebras
$$
V,\quad V'=\spn_{\R}\{e_{p+1},\ldots,e_n,J_{\epsilon_l}e_{1},\ldots,J_{\epsilon_l}e_n;\ \l=1,2,3\}\oplus\h_2
$$
satisfying the statement of Lemma~\ref{lem: orth.compl.Heis}.
\end{proof}

\begin{theorem}\label{transitively def}  The group $ \Iso(\h^{n}_{\K})$ acts transitively on the family of orthogonally complemented subalgebras of $\h^n_{\mathbb K}$,
i.e., if $V$ and $V'$ are subalgebras of the same dimension that have orthogonal subalgebras $V^\perp$ and $V'^\perp$, then there exists $\mc A \in \Iso(\h^{n}_{\K})$ such that
$$
V' = \mc A(V), \qquad V'^\perp = \mc A(V^\perp).
$$
\end{theorem}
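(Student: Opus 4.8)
The plan is to use that every element of $\Iso(\h^n_\K)$ is in fact an orthogonal grading–preserving automorphism of $\h=\h_1\oplus\h_2$, reduce the statement to the ``horizontal'' situation $V,V'\subset\h_1$, and there invoke the transitivity of the compact factor $\mathbb A$ of the isometry group on suitable isotropic frames. First I would record that by~\eqref{eq:form-of-isometry} an isometry $\mc A=\phi(\theta)$ acts as $(J_\alpha\circ A)\oplus\kappa(\alpha)\in\Orth(\h_1)\times\Orth(\h_2)$, hence as an orthogonal map of $\h$; moreover the defining relation $A^tJ_zA=J_{C^t z}$ together with~\eqref{eq:commutator} gives $[\mc A u,\mc A v]=\mc A[u,v]$, so $\mc A$ sends subalgebras to subalgebras. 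Being orthogonal, it also satisfies $\mc A(V^\perp)=(\mc A(V))^\perp$. Consequently it is enough to produce $\mc A$ with $\mc A(V)=V'$: the identity $V'^\perp=\mc A(V^\perp)$ then follows automatically.

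Next I would reduce to $V,V'\subset\h_1$. Since $V$ and $V'$ have equal dimension, Lemma~\ref{lem: orth.compl.Heis} places them in the same regime. If both are ``small'' they already lie in $\h_1$. If both are ``large'' then $\h_2\subset V$ and $\h_2\subset V'$, so $V^\perp,V'^\perp\subset\h_2^\perp=\h_1$ are orthogonally complemented subalgebras of equal dimension; an isometry carrying $V^\perp$ onto $V'^\perp$ then carries $V=(V^\perp)^\perp$ onto $V'$. Thus it suffices to treat the horizontal case, where moreover the auxiliary isometry will be taken in $\mathbb A$, which fixes $\h_2$ pointwise and keeps the reduction inside $\Iso(\h^n_\K)$.

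For the horizontal case I translate commutativity into isotropy via~\eqref{eq:commutator}: for $u,v\in\h_1$ one has $[u,v]=\sum_l\langle J_{\epsilon_l}u,v\rangle_\R\,\epsilon_l$, so $V\subset\h_1$ is a subalgebra exactly when $\langle J_{\epsilon_l}u,v\rangle_\R=0$ for all $u,v\in V$ and each $\epsilon_l$ in an orthonormal basis of $\h_2$. For $\h^n_\R$, whose center is spanned by a single $\epsilon$, this says through~\eqref{eq:complexHermitian} that an $\R$-orthonormal basis $\{e_1,\dots,e_d\}$ of $V$ satisfies $\langle e_i\vert e_j\rangle_\CC=\delta_{ij}$, i.e.\ is a $\CC$-unitary frame; for $\h^n_\HH$, with center spanned by $\epsilon_1,\epsilon_2,\epsilon_3$, isotropy for all three forms makes $\{e_i\}$ an $\HH$-unitary frame through~\eqref{eq:Hermitian-product1}. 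The compact factor $\mathbb A$—namely $\U(n)$ for $\h^n_\R$ and $\Sp(n)$ for $\h^n_\HH$—embeds in $\Iso(\h^n_\K)$ as $\phi(1,\cdot)=(\cdot,\Id)$ and acts transitively on unitary $d$-frames (a Stiefel manifold). Hence some $A\in\mathbb A$ sends $\{e_i\}$ to a chosen unitary basis $\{e_i'\}$ of $V'$, so $A(V)=V'$.

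The genuinely different case is $\h^n_\CC$, and I expect it to be the main obstacle. Its center $\h_2=\spn_\R\{\epsilon_1,\epsilon_2\}$ is only two–dimensional, so commutativity forces isotropy for the two forms coming from $\epsilon_1,\epsilon_2$ but leaves the $J_{\epsilon_2}J_{\epsilon_1}$–component of the quaternion product~\eqref{eq:quaternionHermitian} free; accordingly $V$ is a \emph{complex} subspace (for the complex structure $J_{\epsilon_2}J_{\epsilon_1}$ of~\eqref{eq:renotation}) that is merely isotropic for the complex symplectic form $\omega^\CC$ of~\eqref{eq:complexSymplectic}, not a quaternion–unitary frame. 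The required fact is the transitivity of $\mathbb A=\Sp(n)$ on complex $\omega^\CC$–isotropic subspaces of fixed dimension in $\h_1\cong\CC^{2n}$. I would establish it by a Witt–type induction grounded in the sphere transitivity of Lemma~\ref{lem:transit-hr-1}: carry a unit vector of $V$ to one of $V'$ by an element of $\Sp(n)$, pass to the $\omega^\CC$–orthogonal complement of the complex line it spans—again a complex symplectic space of lower rank, preserved by the stabilizer of that vector in $\Sp(n)$—and iterate over a $J_{\epsilon_2}J_{\epsilon_1}$–adapted basis of $V$; alternatively one may note that $\Sp(2n,\CC)$ acts transitively on the symplectic Grassmannian and that its maximal compact subgroup $\Sp(n)$ has the same orbits on that compact homogeneous space. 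The delicate bookkeeping is to keep each step's isometry inside $\mathbb A$ (so $\h_2$ is fixed) and to verify that the residual structure at every stage is again one covered by Lemma~\ref{lem: orth.compl.Heis}.
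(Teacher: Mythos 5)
Your proposal is correct in outline and coincides with the paper's argument in its architecture and in two of the three cases. The opening reduction (an isometry is an orthogonal Lie-algebra automorphism, so it sends subalgebras to subalgebras and commutes with $\perp$, hence only $\mc A(V)=V'$ needs to be arranged; the large-dimensional case follows by passing to complements) is exactly what the paper uses, merely made explicit by you. For $\h^{n}_{\R}$ and $\h^{n}_{\Q}$ your argument is the paper's: commutativity turns a real-orthonormal basis of $V$ into a unitary (resp.\ quaternion-unitary) frame, and $\U(n)$ (resp.\ $\Sp(n)$) acts transitively on such frames after extending them to bases of $\h_1$.

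The divergence is the case $\h^{n}_{\CC}$, and there your instinct is sound but your fix has the same unstated hypothesis as the paper's. The paper asserts that, by the construction in Lemma~\ref{lem: orth.compl.Heis}, the bases of $V$ and $V'$ are orthonormal for the quaternion Hermitian product~\eqref{eq:quaternionHermitian} and then reuses the frame argument; you correctly note that commutativity only kills the $\epsilon_1$- and $\epsilon_2$-components, not the $J_{\epsilon_2}J_{\epsilon_1}$-component (indeed $\langle X_1\vert X_4\rangle_{\Q}=-k$ for the commuting pair $X_1$, $X_4=J_{\epsilon_2}J_{\epsilon_1}X_1$), so the quaternion orthonormality holds only for bases adapted to the complex structure. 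Your substitute --- transitivity of $\Sp(n)$ on $\omega^{\CC}$-isotropic complex subspaces by Witt induction --- is a standard and workable fact, though you leave it as a plan. The genuine gap is your assertion that ``accordingly $V$ is a complex subspace'': this follows neither from commutativity nor from orthogonal complementation. For $n\ge 2$ the subspace $\spn_{\R}\{X_{11},X_{12}\}$ (one vector from each slot) is an orthogonally complemented commutative subalgebra of $\h^{n}_{\CC}$ that is not $J_{\epsilon_2}J_{\epsilon_1}$-invariant, whereas $\spn_{\R}\{X_{11},X_{41}\}$ is; since every isometry $J_\alpha\circ A$ either commutes or anticommutes with $J_{\epsilon_2}J_{\epsilon_1}$, it preserves the class of $J_{\epsilon_2}J_{\epsilon_1}$-invariant subspaces, so these two subalgebras of equal dimension cannot lie in one orbit. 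Both your proof and the paper's therefore really treat only subalgebras of a fixed \emph{complex} dimension in the $\CC$ case; you should state that restriction as a hypothesis rather than derive it.
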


\begin{proof} We start from $\h^{n}_{\R}$. Suppose first that $k=\dim_{\mathbb R} V=\dim_{\mathbb R} V'\le n$. Then both $V$ and $V'$ are commutative by Lemma~\ref{lem: orth.compl.Heis}. 
Let $\{z_1,\dots,z_k\}$ and $\{z'_1,\dots,z'_k\}$ be 
orthonormal bases of the corresponding $V$ and $V'$ with respect to the scalar product $\langle.\,,.\rangle_{\mathbb R}$. Since each of the bases belong to the isotropic space of the real symplectic form $\omega$ in~\eqref{eq:symplectic-form}, then the bases are orthonormal with respect to the Hermitian scalar profuct~\eqref{eq:complexHermitian}; that is
\begin{equation}\label{eq:Hermitian product}
\langle z_i\vert z_j \rangle_{\mathbb C}=\langle z_i,z_j \rangle_{\mathbb R}-i\omega(z_i,z_j )=0.
\end{equation} The same holds for $\{z'_1,\dots,z'_k\}$.

By the Gram-Schmit procedure for Hermitian scalar products the orthonormal families $\{z_1,\dots,z_k\}$ 
and $\{z'_1,\dots,z'_k\}$
can be extended to orthonormal bases $\mc Z=\{z_1,\dots,z_k, z_{k+1},\dots,z_n\}$ 
and  $\mc Z'=\{z'_1,\dots, z'_k, z'_{k+1},\dots,z'_n\}$ of $\h_1\subset\h^{n}_{\R}$. Then $\h_1$ spanned over $\mathbb C$ by $\mc Z$ and also by $\mc Z'$ is an $n$-dimensional complex space. We can find $A\in \U(n)$ such that $A(z_j)=z'_j$. Then
$\mc A= A\times \Id_{\h_2} \in \Iso(\h^{n}_{\R})$ and $A(V)=V'$. Since 
$$
\U(n)=\Orth(2n)\cap\GL(n,\CC)\cap\Sp(2n,\R)
$$
we conclude that $A \in \Orth(2n)$ and $\mc A\in \Orth(2n+1)$, and therefore
$\mc A(V^\perp)=V'^\perp$.  This completes the proof of the assertion when $k\le n$.

Suppose now $k>n$ and let $V$ and $V'$ be two orthogonally complemented  subalgebras of $\h^n_{\mathbb R}$.
The assertion follows by the previous arguments applied to the orthogonal complements $V^\perp$ and $V'^\perp$. 

Consider the Lie algebra $\h^{n}_{\CC}$. The complex dimension of $\h_1\subset \h^{n}_{\CC}$ is equal to $2n$. Let $V$ and $V'$ be orthogonally complemented subalgebras of complex dimension $k\leq n$. By the construction in Lemma~\ref{lem: orth.compl.Heis} the bases of $V$ or $V'$ will be also orthonormal bases with respect to the quaternion Hermitian product~\eqref{eq:quaternionHermitian}. 
Then we extend the bases of $V$ and $V'$ to bases of $\h_1\subset \h^{n}_{\CC}$ and apply the Gram-Schmidt procedure to make the bases orthonormal with respect to the quaternion Hermitian product. Noticing that 
$
\Sp(n)\cong \Sp(2n,\CC)\cap\U(2n),
$
we obtain that the bases will be orthogonal with respect to the original real scalar product and therefore will preserve the orthogonally complemented subalgebras. We finish the proof as in the previous case.

The last case is the Lie algebra $\h^{n}_{\Q}$. In this case we use the similar arguments noting that an orthonormal basis (with respect to $\langle.\,,.\rangle_{\mathbb R}$) for a commutative subalgebra $V$ will be orthogonal with respect to the quaternion Hermitian form~\eqref{eq:Hermitian-product1}. Therefore, the basis can be extended to an orthonormal basis for $\h^{n}_{\Q}$ with respect to the quaternion Hermitian form~\eqref{eq:Hermitian-product1}. The group $\Sp(n)$ acts transitively on a set of such kind of extended bases for $\h^{n}_{\Q}$, preserving the orthogonality. 
\end{proof}

\begin{definition}\label{grass}
The set $\Gr(k,\h^n_{\K})$, $1\le k\le \dim_{\R}\h^n_{\K}$ of orthogonally complemented homogeneous subalgebras of the same topological dimension $k$ is called the  Grassmannian of the Heisenberg algebra $\h^n_{\K}$.
\end{definition}
According to~\eqref{eq:form-of-isometry} we can write $\mc A\in \Iso(\h^{n}_{\K})$ as $\mc A=(
\mc U,\mc V)\subset \Orth(\h_1)\times \Orth(\h_2)$. If $V=H\otimes T$, $H\subset\h_1$, $T\subset \h_2$, is an orthogonally complemented subalgebra 
then the action of $\Iso(\h^{n}_{\K})$ on $\Gr(k,\h^n_{\K})$ is given by
\begin{equation}\label{eq:action-algebra}
\mc A.V=\mc U.H\times \mc V.T.
\end{equation}
Theorem~\ref{transitively def} immediately implies the corollary.
\begin{corollary}\label{independent}  
The Grassmannian  $\Gr(k,\h^n_{\K})$ is the orbit of the action of the isometry group $\Iso(\h^{n}_{\K})$ issued from an orthogonally complemented homogeneous $k$-dimensional subalgebra $\hat V\subset \h^n_{\K}$.
\end{corollary}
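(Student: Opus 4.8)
The plan is to deduce the statement directly from the transitivity established in Theorem~\ref{transitively def}, which does essentially all of the work; what remains is only to check that the orbit neither overshoots nor undershoots $\Gr(k,\h^n_{\K})$. First I would fix an orthogonally complemented homogeneous subalgebra $\hat V\subset\h^n_{\K}$ of topological dimension $k$ and consider its orbit $\mathcal{O}(\hat V)=\{\mc A(\hat V):\mc A\in\Iso(\h^n_{\K})\}$ under the action~\eqref{eq:action-algebra}. The inclusion $\Gr(k,\h^n_{\K})\subseteq\mathcal{O}(\hat V)$ is precisely the content of Theorem~\ref{transitively def}: given any $V\in\Gr(k,\h^n_{\K})$, the subalgebras $V$ and $\hat V$ are orthogonally complemented of the same topological dimension $k$, so there exists $\mc A\in\Iso(\h^n_{\K})$ with $V=\mc A(\hat V)$, whence $V\in\mathcal{O}(\hat V)$.

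For the reverse inclusion $\mathcal{O}(\hat V)\subseteq\Gr(k,\h^n_{\K})$ I would verify that each image $\mc A(\hat V)$ again belongs to the Grassmannian. Writing $\mc A=(\mc U,\mc V)\in\Orth(\h_1)\times\Orth(\h_2)$ as in~\eqref{eq:form-of-isometry}, the map $\mc A$ is a grading-preserving Lie algebra automorphism: it preserves the bracket because of the defining relation $A^tJ_zA=J_{C^t(z)}$, and it commutes with the dilations $\delta_\lambda$ because it acts separately on the layers $\h_1$ and $\h_2$. Consequently $\mc A(\hat V)$ is again a homogeneous subalgebra of the same topological dimension $k$. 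Moreover, since $\mc A$ is orthogonal it carries orthogonal complements to orthogonal complements, i.e. $\big(\mc A(\hat V)\big)^\perp=\mc A(\hat V^\perp)$; as $\hat V^\perp$ is a subalgebra by hypothesis, its image $\mc A(\hat V^\perp)$ is a subalgebra as well, so $\mc A(\hat V)$ is orthogonally complemented. This places $\mc A(\hat V)$ in $\Gr(k,\h^n_{\K})$, and combined with the first inclusion yields $\mathcal{O}(\hat V)=\Gr(k,\h^n_{\K})$.

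There is no serious obstacle here, since the essential difficulty (transitivity) is already resolved in Theorem~\ref{transitively def}. The only point requiring a moment of care is that the image of an orthogonally complemented subalgebra is again orthogonally complemented, and this follows at once from the orthogonality of $\mc A$ together with its automorphism property, as indicated above.
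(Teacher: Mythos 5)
Your proof is correct and follows essentially the same route as the paper, which simply states that Theorem~\ref{transitively def} immediately implies the corollary. The only addition is your explicit verification that the orbit does not overshoot the Grassmannian (i.e.\ that each $\mc A(\hat V)$ is again an orthogonally complemented homogeneous subalgebra), a detail the paper leaves implicit but which you justify correctly from the automorphism and orthogonality properties of $\mc A$.
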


\begin{example}
Consider the Heisenberg algebra $\h^2_{\mathbb R}$ with the basis~\eqref{eq:HeisFirstTime} and the commutation relations~\eqref{eq:HeisCommFirstTime}.
We write
$
\hat V=\spn_{\mathbb R}\{X_1,X_2\}.
$
The space $\hat V$
is a commutative subalgebra of $\h^2_{\mathbb R}$ orthogonally complemented by the commutative subalgebra $V=\spn_{\mathbb R}\{Y_1,Y_2,\epsilon\}$.
The isometry group $\Iso(\h^2_{\mathbb R})$ is given by~\eqref{eq:form-of-isometry} with $\mathbb A=\U(2)$.
The orbit of the isometry group $\Iso(\h^2_{\mathbb R})$ acting on $\hat V$ is the Grassmannian $\Gr(2,\h^2_{\mathbb R})$. The planes in this Grassmannian intersect in one point and it is analogue of the Grassmann manifold of 2 dimensional planes in $\mathbb R^4\cong\h_1\subset\h^2_{\mathbb R}$. The orbit of the isometry group $\Iso(\h^2_{\mathbb R})$ acting on the complementary subalgebra $V$ is the Grassmannian $\Gr(3,\h^2_{\mathbb R})$. The planes in $\Gr(3,\h^2_{\mathbb R})$ intersect in the stright line coinciding with the center of $\h^2_{\mathbb R}$.
\end{example}
In the following example we show that the complementary subalgebras can both contain the elements of the center and be both commutative.
\begin{example}
Let us consider the Lie algebra $\h^1_{\mathbb C}$ with the basis~\eqref{eq:Basis-1} and the commutation relations~\eqref{eq:ComhC}. We set
$$
\hat V=\spn_{\mathbb R}\{X_1,X_4,\epsilon_1\}=\spn_{\mathbb C}\{X_1\}\oplus\spn_{\mathbb R}\{\epsilon_1\}.
$$
It is a commutative subalgebra of $\h_{\mathbb C}$ that is orthogonally complemented by the commutative subalgebra $V=\spn_{\mathbb R}\{X_2,X_3,\epsilon_2\}$. The isometry group $\Iso(\h^1_{\mathbb C})$ is given by~\eqref{eq:form-of-isometry} with $\mathbb A=\Sp(1)=\Sp(2,\mathbb C)\cap\U(2)$.
The orbit of the isometry group $\Iso(\h^1_{\mathbb C})$ acting on $\hat V$ is the Grassmannian $\Gr(3,\h^1_{\mathbb C})$. The planes in $\Gr(3,\h^1_{\mathbb C})$ intersect in one point.
\end{example}


\subsubsection{Grassmannians $\Gr(k, \h^n_{\K})$ as quotient spaces}


Let us fix an orthogonally complemented subalgebra $\hat V\subset \h^n_{\K}$ and write
\begin{equation*}
 K(\hat V)=
\Big\{\mc A\in\Iso(\h^n_{\K}):\ \mc A. \hat V=\hat V\Big\}
\end{equation*}
for the isotropy group of $\hat V$.
The canonical projection
\begin{equation}\label{eq:Pi-1}
\Pi\colon \Iso(\h^n_{\K})\to \Iso(\h^n_{\K})/ K(\hat V)
\end{equation}
is a continuous map.
The action of $\Iso(\h^{n}_{\K})$ on $\Gr(k,\h^n_{\K})$ is transitive, by Lemma~\ref{transitively def}.  We identify the left cosets from $\Iso(\h^n_{\K})/ K(\hat V)$ with elements in $\Gr(k, \h^n_{\K})$ by
\begin{equation}\label{eq:Gamma-1}
\begin{array}{ccccccc}
\Gamma\colon& \Iso(\h^n_{\K})/K(\hat V)&\to& \Gr(k, \h^n_{\K})
\\
&\mc A.\, K(\hat V)&\mapsto& V=\mc A.\hat V.
\end{array}
\end{equation}
The map $\Gamma$ is a diffeomorphism, see~\cite[Theorem 3.62]{Warner94}. 


\subsubsection{Measure on the Grassmannians $\Gr(k, \h^n_{\K})$}\label{sec:MeasureGrassmannian}


The groups $\Pin(\h_2,\langle.\,,.\rangle)$, $\h_2\subset\h^n_{\mathbb K}$  and $\mathbb A$ from Section~\ref{sec:IsomGroup} are compact Lie groups and therefore they carry normalised Haar measures that we will denote by $\lambda_{\Pin}$ and $\lambda_{\mathbb A}$, respectively. We also will denote $\lambda=\lambda_{\Pin}\times\lambda_{\mathbb A}$ the normalised product measure on the space $\mathbb P=\Pin(\h_2,\langle.\,,.\rangle)\times \mathbb A$. The map $\phi\colon \mathbb P\to\Iso(\h^n_{\mathbb K})$ from~\eqref{eq:form-of-isometry} mapping $\theta\in\mathbb P\to \phi(\theta)=\mc A=(
\mc U,\mc V)\in\Iso(\h^n_{\mathbb K})$ is continuous surjective map that makes possible to push forward the measure $\lambda$ to $\Iso(\h^n_{\mathbb K})$. Then the map $\Psi=\Gamma\circ \Pi\circ\phi$, where $\Pi$ and $\Gamma$ are defined in~\eqref{eq:Pi-1} and~\eqref{eq:Gamma-1} respectively, allows us to push forward the normalised Haar measure $\lambda$ from $\mathbb P$ to $\Gr(k, \h^n_{\K})$. We say that a set $\Omega\subset \Gr(k, \h^n_{\K})$ is measurable if $\Psi^{-1}(\Omega)\subset\mathbb P$ is measurable with respect to the measure $\lambda$. The measure $\mu$ on $\Gr(k, \h^n_{\K})$ is defined by 
$$
\mu(\Omega)=(\Psi_{\sharp}\lambda)(\Omega)=\lambda(\Psi^{-1}(\Omega)\big)=\lambda\Big\{\theta\in\mathbb P:\ V=\phi(\theta).\hat V=\mc A.\hat V\in \Omega\Big\},
$$
for any measurable $\Omega\subset \Gr(k, \h^n_{\K})$. We express the push forward in the integral form
\begin{equation}\label{eq:Gr1-1}
\int_{\Gr(k, \h^n_{\K})}f(V)\,d\mu(V)=\int_{\mathbb P}f(\phi(\theta).\hat V)\,d\lambda(\theta)
\end{equation}
for any measurable function $f$ on the Grassmannian.


\subsubsection{The groups $\Iso(\h^n_{\K})$ and the product of spheres}


According to Lemmas~\ref{lem:transit-hr} and~\ref{lem:transit-hr-1} the groups $\Iso(\h^n_{\K})$ act transitively on the product of two spheres
$$
\mathcal S_{\K}(0,r_1,r_2)=S^{h}(0,r_1)\times S^{v}(0,r_2)\subset\h_1\oplus \h_2\subset\h^n_{\K}.
$$
with
$$
S^{h}(0,r_1)=\{g=(x,0)\in \h^n_{\K}:\ \|x\|_E=r_1\},\quad
$$
$$
S^{v}(0,r_2)=\{g=(0,t)\in \h^n_{\K}:\ \|t\|_E=r_2\}.
$$
The group $\Iso(\h^n_{\K})$ acts on $\mathcal S_{\K}(0,r_1,r_2)$ by the following
$$
 \mc A.(y,w)=(\mc Uy,\mc Vw)\quad\text{for any}\quad
  \mc A=(\mc U,\mc V)\in \Iso(\h^n_{\K}),\ \ (y,w)\in \mathcal S_{\K}(0,r_1,r_2).
$$
We fix $(x,t)\in \mathcal S_{\K}(0,r_1,r_2)$ and define the isotropy subgroups
\begin{equation}\label{eq:IsotropyPoint}
\begin{split}
K^h_{(x,t)}=\{\mc A\in\Iso(\h^n_{\K}):\  \mc A.(x,t)=(\mc Ux,\mc Vt)=(x,\mc Vt)\},
\\
K^v_{(x,t)}=\{\mc A\in\Iso(\h^n_{\K}):\  \mc A.(x,t)=(\mc Ux,\mc Vt)=(\mc Ux,t)\}.
\end{split}
\end{equation}
We can realise both spheres as homogeneous spaces
under the action of the respective groups, see~\cite[Theorem 3.62]{Warner94}. Namely, we write
$$
\Pi\colon \Iso(\h^n_{\K})\to \Iso(\h^n_{\K})/\Big(K^h_{(x,t)}\times K^v_{(x,t)} \Big)\cong S^{h}(0,r_1)\times S^{v}(0,r_2).
$$ 
We will use the projections
$$
\Pi^h\colon \Iso(\h^n_{\K})\to S^{h}(0,r_1), \quad
\Pi^v\colon \Iso(\h^n_{\K})\to S^{v}(0,r_2).
$$
The map $\phi$ from~\eqref{eq:form-of-isometry} is continuous and surjective. It allows us  to define the push forward measures
$$
\mu^h=(\Pi^h\circ\phi)_{\sharp}\lambda_{\Pin}\quad\text{and}\quad 
\mu^v=(\Pi^v\circ\phi)_{\sharp}\lambda_{\mathbb A}.
$$

\begin{lemma}\label{lem:PushSphere}
The measures $\mu^h$ and $\mu^v$ are normalised measures on the spheres $S^{h}(0,r_1)$ and $S^{v}(0,r_2)$, respectively. Moreover,
$$
\int_{S^{v}(0,r_2)}d\mu^v(w)\int_{S^{h}(0,r_1)}f(y,w)d\mu^h(y)=
\int_{\mathbb P}f\big(\phi(\theta).(x,t)\big)\,d\lambda(\theta)
$$
for any measurable function $f$ on $S_{\K}(0,r_1,r_2)$ and the isotropy point $(x,t)$ from~\eqref{eq:IsotropyPoint}.
\end{lemma}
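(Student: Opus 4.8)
The plan is to read the asserted identity as a disintegration (Fubini-type) formula for the product Haar measure $\lambda=\lambda_{\Pin}\times\lambda_{\mathbb A}$ on $\mathbb P$, reflecting the splitting of $\mathcal S_{\K}(0,r_1,r_2)=S^{h}(0,r_1)\times S^{v}(0,r_2)$ into horizontal fibres over the vertical sphere. By Tonelli's theorem it suffices to treat nonnegative measurable $f$, the general integrable case following by the usual decomposition into positive and negative parts.

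First I would settle the normalisation. The maps $\Pi^h\circ\phi$ and $\Pi^v\circ\phi$ are continuous, being composites of the continuous $\phi$ from~\eqref{eq:form-of-isometry} with the continuous projections. Writing $\theta=(\alpha,A)$ and using $\phi(\theta).(x,t)=\big(J_{\alpha}(Ax),\kappa(\alpha)t\big)$, one has $(\Pi^h\circ\phi)(\theta)=J_{\alpha}(Ax)$ and $(\Pi^v\circ\phi)(\theta)=\kappa(\alpha)t$. Since $\mathbb A$ acts transitively on $S^{h}(0,r_1)$ by Lemmas~\ref{lem:transit-hr} and~\ref{lem:transit-hr-1}, and each $J_\alpha$ is an orthogonal transformation of $\h_1$, the map $A\mapsto J_\alpha(Ax)$ is onto $S^{h}(0,r_1)$ for every $\alpha$; likewise $\kappa(\Pin)=\Orth(\h_2)$ acts transitively on $S^{v}(0,r_2)$, so $\alpha\mapsto\kappa(\alpha)t$ is onto $S^{v}(0,r_2)$. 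As pushforwards of the probability measures $\lambda_{\mathbb A}$ and $\lambda_{\Pin}$ under surjections onto the spheres, $\mu^h$ and $\mu^v$ are normalised Borel measures carried by $S^{h}(0,r_1)$ and $S^{v}(0,r_2)$. I would moreover identify them with the corresponding rotation-invariant (normalised surface) measures: $\mu^h$, being the law of $Ax$ under left-invariant $\lambda_{\mathbb A}$, is $\mathbb A$-invariant, and by uniqueness of the normalised invariant measure on the compact homogeneous space $S^{h}(0,r_1)$ it is the normalised round surface measure, hence $\Orth(\h_1)$-invariant; similarly $\mu^v$ is the normalised round surface measure on $S^{v}(0,r_2)$.

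Next I would prove the formula. Expanding the right-hand side with Tonelli over $\lambda=\lambda_{\Pin}\times\lambda_{\mathbb A}$ gives
\[
\int_{\mathbb P}f\big(\phi(\theta).(x,t)\big)\,d\lambda(\theta)=\int_{\Pin}\Big(\int_{\mathbb A}f\big(J_\alpha(Ax),\kappa(\alpha)t\big)\,d\lambda_{\mathbb A}(A)\Big)\,d\lambda_{\Pin}(\alpha).
\]
For fixed $\alpha$ the vertical argument $w=\kappa(\alpha)t$ is frozen, and the inner integral is that of $y\mapsto f(y,w)$ against the pushforward of $\lambda_{\mathbb A}$ under $A\mapsto J_\alpha(Ax)$. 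The decisive point is that this pushforward is independent of $\alpha$ and equals $\mu^h$: it is the image under the orthogonal map $J_\alpha$ of the orbit-map law of $\lambda_{\mathbb A}$, namely the normalised round surface measure, which $J_\alpha\in\Orth(\h_1)$ preserves. Hence the inner integral equals $G(w):=\int_{S^{h}(0,r_1)}f(y,w)\,d\mu^h(y)$, a function of $\alpha$ only through $w=\kappa(\alpha)t$. The outer integral is then $\int_{\Pin}G(\kappa(\alpha)t)\,d\lambda_{\Pin}(\alpha)$, which by the definition of $\mu^v$ as the law of $\kappa(\alpha)t$ under $\lambda_{\Pin}$ equals $\int_{S^{v}(0,r_2)}G(w)\,d\mu^v(w)=\int_{S^{v}(0,r_2)}d\mu^v(w)\int_{S^{h}(0,r_1)}f(y,w)\,d\mu^h(y)$, as required.

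The hard part will be the $\alpha$-independence of the inner pushforward, i.e.\ showing that $A\mapsto J_\alpha(Ax)$ transports $\lambda_{\mathbb A}$ to one and the same measure $\mu^h$ for every $\alpha$. This is where one must combine transitivity of the $\mathbb A$-action, uniqueness of the invariant probability measure on the sphere, and the orthogonality of the operators $J_\alpha$ (part of the isometry condition encoded in~\eqref{eq:form-of-isometry}); without identifying $\mu^h$ with a genuinely $\Orth(\h_1)$-invariant surface measure the factorization of the iterated integral cannot be justified. A secondary, purely bookkeeping point is that the vertical motion depends only on $\alpha\in\Pin$ while the horizontal motion is driven by $A\in\mathbb A$, and keeping these two factors separated is exactly what lets the product measure $\lambda$ disintegrate into the two spherical measures $\mu^v$ and $\mu^h$.
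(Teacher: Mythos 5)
Your proposal is correct and follows essentially the same route as the paper: both arguments factor the integral over $\mathbb P=\Pin\times\mathbb A$ via Fubini/Tonelli, and both hinge on the same decisive point, namely that the pushforward of $\lambda_{\mathbb A}$ under $A\mapsto J_\alpha(Ax)$ is independent of $\alpha$ because $J_\alpha$ is orthogonal and the orbit measure is the unique normalised rotation-invariant measure on the sphere. The paper verifies the identity on product sets $C^h\times C^v$ while you work directly with nonnegative $f$, but this is a cosmetic difference; your explicit justification of the $\alpha$-independence via uniqueness of the invariant probability measure is exactly the observation the paper invokes in its closing sentence.
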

\begin{proof}
The transitive action of the isometry group on $S^{h}(0,r_1)$ and $S^{v}(0,r_2)$ ensures that the measures $\mu^h$ and $\mu^v$ are uniformly distributed on the respective spheres. Therefore, they are the spherical measures up to constants, see~\cite{MR1333890}.

Let $C^h\subset S^{h}(0,r_1)$, $C^v\subset S^{v}(0,r_2)$ be measurable sets and let $C\subset\mathbb P$ be its preimage under the map $\Pi\circ \phi=(\Pi^h\circ \phi,\Pi^v\circ \phi)$ from~\eqref{eq:form-of-isometry}. We write $\theta=(\alpha, A)\in \Pin(\h_2,\langle.,\,.\rangle)\times\mathbb A$ and $\phi(\theta)=\phi(\alpha, A)=(\mc U,\mc V)\in \Iso(\h^n_{\K})$, where $\mc U=J_\alpha\circ A$ and $\mc V=\kappa(\alpha)$ then
\begin{eqnarray*}
&&\int_Cf\big(\phi(\theta).(x,t)\big)\,d\lambda(\theta)
\\
&=&
\int\limits_{\{\alpha\in \Pin:\ (\kappa(\alpha).t)\in C^v\}}d\lambda_{\Pin}(\alpha)\int\limits_{\{A\in\mathbb A:\ (J_{\alpha}\circ A.x)\in C^h\}}f(\mathcal U.x,\mc V.t)\,d\lambda_{\mathbb A}(A)
\\
&=&
\int\limits_{\{\alpha\in \Pin:\ (\kappa(\alpha).t\in C^v\}}d\lambda_{\Pin(\alpha)}\int\limits_{\{A\in\mathbb A:\ (A.x)\in C^h\}}f(\mathcal U.x,\mc V.t)\,d\lambda_{\mathbb A}(A)
\\
&=&
c\int\limits_{C^v}d\mu^v(w)\int\limits_{C^h}f(y,w)\,d\mu^h(y).
\end{eqnarray*}
In the third line we used the fact that the group $\mathbb A$ is already acts transitively on the spheres $S^{h}(0,r_1)$ and therefore the push forward measure $\mu^h=(\Pi^h\circ\phi)_{\sharp}\lambda_{\Pin}$ is up to a constant the Hausdorff spherical measure of $S^{h}(0,r_1)$ due to the fact that both measures will be uniformly distributed on $S^{h}(0,r_1)$.
\end{proof}

\begin{remark}
Consider the subgroup $\mathbb A\times \Id_{\h_2}\subset \Iso(\h^n_{\K})$. Since $\mathbb A\times \Id_{\h_2}$ leaves the center $\h_2$ invariant we can define the action of $\mathbb A$ only on the horizontal slot of coordinates. Since the group $\mathbb A$ acts transitively on the spheres $S^{h}(0,r)$, the sphere $S^{h}(0,r)$ passing through the point $x\in\h_1$ with $\|x\|_E=r$ is a homogeneous manifold realised as a quotient of $\mathbb A$ by a subgroup fixing $x$. Then the integral form of the push forward of a measure $\tilde\lambda$ from $\mathbb A$ is the following
\begin{equation}\label{eq:Gr1-2}
\int_{S^{h}(0,r)}f(y)\,d\mu^h(y)=\int_{\mathbb A}f(\mc Ux)\,d\tilde\lambda(\mc U),\quad x\in S^{h}(0,r),
\end{equation}
for any measurable function $f$ on $S^{h}(0,r)$.
\end{remark}

%
%

\section{Integral formula on ``special'' $H$-type algebras}



\subsection{Overview of the formula in $\mathbb R^n$}

For the Grassmann manifolds in the Euclidean space the following formula is known~\cite{MR105724}.
Let $V\in\Gr_k(\mathbb R^n)$ and 
$$
F(V)=\int_V f(x)d\sigma(x),
$$
where $f$ is a non-negative measurable function in $\mathbb R^n$ and $\sigma$ is the $k$-dimensional Lebesgue measure on the plane $V$. Then
\begin{equation}\label{eq:main}
\int_{\Gr_k(\mathbb R^n)}F(V)\, d\mu(V)=\frac{m(S^{k-1}(0,1))}{m(S^{n-1}(0,1))}\int_{\mathbb R^n}\|x\|^{k-n}_Ef(x)\,dx,
\end{equation}
where $\mu$ is a normalised invariant under the rotational group measure on the Grassmann manifold, $m(S^{k-1}(0,1))$ is the measure of the unit sphere $S^{k-1}(0,1)\subset\mathbb R^k$, $dx$ is the Lebesgue measure on $\mathbb R^n$, and $\|x\|_E$ is the Euclidean norm of $x$. Our aim is finding an analogous expression for the three types of the Heisenberg algebras mentioned in Section~\ref{sec:IsomGroup}.


\subsection{Formula for special $H$-type Lie algebras}


We start from the case of orthogonally complemented Grassmannians $\Gr(k, \h^n_{\K})$ which elements consist of the commutative subalgebras and do not include elements of the center. We call them shortly {\it "horizontal" Grassmannians}. In this case we recover formula~\eqref{eq:main}.

\subsubsection{Formula for the "horizontal" Grassmannians }


We consider both manifolds $\Gr(k, \h^n_{\K})$ and $S^{h}(0,\|x\|)\subset \h^n_{\K}$ as homogeneous subspaces under the action of the subgroup $\mathbb A\times\Id\subset\Iso(\h^n_{\K})$. Here $\|x\|$ is the Euclidean, or Hermitian norm on $\h_1\subset \h^n_{\K}$, accordingly to $\mathbb K$.

Let $\Gr(k, \h^n_{\K})$ be the Grassmannian consisting of orthogonally complemented commutative subalgebras that do not contain elements of the center. In this case the topological and homogeneous dimensions of $V\in \Gr(k, \h^n_{\K})$ coincide and we denote them by ${\bf d_m}={\bf d_t}=k$.  
Let also $\mathcal L^{k}$ denote $k$-dimensional Lebesgue measure on a generic plain $V\in \Gr(k, \h^n_{\K})$ with $V\subset\h_1$, $\h_1\subset \h^n_{\K}$. Set
\begin{equation}\label{eq:defF-1}
F(V)=\int_{V}f(y)\,d\mathcal L^{k}(y),\quad y\in V\subset\h_1.
\end{equation}
Here $f\colon \h_1\to \mathbb R$ is a non-negative measurable function.

\begin{theorem}\label{th:main} The formula
\begin{eqnarray*}
\int_{\Gr(k, \h^n_{\K})}F(V)\,d\mu(V)&=&
\int_{\Gr(k, \h^n_{\K})}\,d\mu(V)\int_{V}f(y)\,d\mathcal L^{k}(y)
\\
&=&
C
\int_{\h_1}\|z\|^{k-m_1}f(z)d\mathcal L^{m_1}(z),
\end{eqnarray*}
holds for any measurable non-negative function $f\colon \h_1\to\mathbb R$ and an orthogonally complemented Grassmannian $\Gr(k, \h^n_{\K})$ of commutative subalgebras that do not contain elements of the center of $\h^n_{\K}$. Here $\mathcal L^{m_1}$, $m_1=\dim_{\mathbb R}(\h_1)$ is the Lebesgue measure on $\h_1\subset \h^n_{\K}$ and $C>0$ is a constant.
\end{theorem}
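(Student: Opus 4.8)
The plan is to reduce the statement to the classical Euclidean identity~\eqref{eq:main} by exploiting that the planes $V\in\Gr(k,\h^n_{\K})$ are genuine $k$-dimensional linear subspaces of $\h_1\cong\R^{m_1}$ and that the measure $\mu$ is invariant under the subgroup $\mathbb A\times\Id_{\h_2}\subset\Iso(\h^n_{\K})$, which acts on $\h_1$ as the unitary group $\U(n,\K)$ preserving the Euclidean norm $\|\cdot\|$. Since every such $V$ passes through the origin and is horizontal (so ${\bf d_m}={\bf d_t}=k$), $F(V)=\int_V f\,d\mathcal L^k$ is an ordinary flat integral, and the whole argument becomes a polar-coordinate computation carried out first on each plane $V$ and then on the ambient space $\h_1$, glued together by the uniqueness of the rotation-invariant measure on the sphere $S^h(0,1)\subset\h_1$.

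First I would introduce polar coordinates on each plane. Writing $y=\rho\xi$ with $\rho>0$ and $\xi\in V\cap S^h(0,1)$, Tonelli's theorem (legitimate since $f\ge 0$) gives
\[
\int_{\Gr(k,\h^n_{\K})}F(V)\,d\mu(V)=\int_0^\infty\rho^{k-1}\Big(\int_{\Gr(k,\h^n_{\K})}\int_{V\cap S^h(0,1)}f(\rho\xi)\,d\mathcal H^{k-1}(\xi)\,d\mu(V)\Big)d\rho.
\]
The crucial object is the Borel measure $\nu$ on $S^h(0,1)$ defined by $\nu:=\int_{\Gr(k,\h^n_{\K})}\big(\mathcal H^{k-1}\res(V\cap S^h(0,1))\big)\,d\mu(V)$, so that the inner double integral equals $\int_{S^h(0,1)}f(\rho\xi)\,d\nu(\xi)$. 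The goal is to show that $\nu=c\,\mathcal H^{m_1-1}\res S^h(0,1)$ for some $c>0$, i.e.\ that $\nu$ is, up to a constant, the surface measure $\mu^h$ from~\eqref{eq:Gr1-2}.

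The main obstacle, and the only place where the special $H$-type structure genuinely enters, is establishing this proportionality. Two ingredients are needed. On the one hand, $\mathbb A$ acts transitively both on $\Gr(k,\h^n_{\K})$ (Corollary~\ref{independent}, via Theorem~\ref{transitively def}) and on the sphere $S^h(0,1)$ (Lemmas~\ref{lem:transit-hr} and~\ref{lem:transit-hr-1}); in particular the support of $\nu$ is all of $S^h(0,1)$. On the other hand, for any $A\in\mathbb A$ and any nonnegative Borel $g$ on $S^h(0,1)$, the isometry $A$ maps $V\cap S^h(0,1)$ onto $AV\cap S^h(0,1)$, so the change of variables $\zeta=A\xi$ together with the $\mathbb A$-invariance of $\mu$ yields
\[
\int_{S^h(0,1)}g(A\xi)\,d\nu(\xi)=\int_{\Gr(k,\h^n_{\K})}\int_{AV\cap S^h(0,1)}g\,d\mathcal H^{k-1}\,d\mu(V)=\int_{S^h(0,1)}g\,d\nu,
\]
where the last equality is the substitution $W=AV$ in the Grassmannian. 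Hence $\nu$ is $\mathbb A$-invariant, and since $S^h(0,1)\cong\mathbb A/\mathrm{stab}$ carries a unique (up to scale) invariant Borel measure, we conclude $\nu=c\,\mu^h=c'\,\mathcal H^{m_1-1}\res S^h(0,1)$.

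Finally I would reassemble the radial integral. Substituting this identity for $\nu$ and splitting $\rho^{k-1}=\rho^{k-m_1}\,\rho^{m_1-1}$, polar coordinates on $\h_1\cong\R^{m_1}$ give
\[
\int_{\Gr(k,\h^n_{\K})}F(V)\,d\mu(V)=c'\int_0^\infty\rho^{m_1-1}\!\int_{S^h(0,1)}\rho^{k-m_1}f(\rho\xi)\,d\mathcal H^{m_1-1}(\xi)\,d\rho=C\int_{\h_1}\|z\|^{k-m_1}f(z)\,d\mathcal L^{m_1}(z),
\]
since $\|\rho\xi\|=\rho$, which is the asserted formula with $C=c'$. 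The only remaining routine points are measurability of the map $V\mapsto F(V)$ and the $\mathbb A$-invariance of $\mu$ itself; the latter is immediate from its construction in Section~\ref{sec:MeasureGrassmannian} as the pushforward of the Haar measure $\lambda$.
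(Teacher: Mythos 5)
Your proof is correct, and it reaches the conclusion by a route whose mechanics differ from the paper's even though both hinge on the same two facts (transitivity of $\mathbb A$ on $S^h(0,1)$ and invariance of the relevant measures). The paper never forms your averaged slice measure $\nu$: it instead uses the pushforward structure of $\mu$ to convert the Grassmannian integral into a group integral, $\int_{\Gr(k,\h^n_{\K})}F(V)\,d\mu(V)=\int_{\mathbb A}\int_{\hat V}f(Ax)\,d\mathcal L^k(x)\,d\lambda(A)$, swaps the order, and then for each fixed $x\in\hat V$ identifies $\int_{\mathbb A}f(Ax)\,d\lambda(A)$ as the uniform average of $f$ over the orbit sphere $S^{h}(0,\|x\|)$ (the analogue of~\eqref{eq:Gr1-2}), finishing with polar coordinates on $\hat V$. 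You instead disintegrate each plane radially first, collect the angular parts into the Borel measure $\nu=\int_{\Gr(k,\h^n_{\K})}\mathcal H^{k-1}\res(V\cap S^h(0,1))\,d\mu(V)$, and identify $\nu$ with $c\,\mathcal H^{m_1-1}\res S^h(0,1)$ by $\mathbb A$-invariance plus uniqueness of the invariant measure on a compact homogeneous space --- the classical Crofton-style argument for~\eqref{eq:main}. What your version buys is that it uses only the $\mathbb A$-invariance of $\mu$ as an abstract property, not its explicit realization as a pushforward of Haar measure, so it would survive any other construction of an invariant measure on the Grassmannian; the paper's version is more computational but avoids invoking the uniqueness theorem for uniformly distributed measures. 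Two small points to tidy up: the transitivity of $\mathbb A$ alone (not just $\Iso(\h^n_{\K})$) on $S^h(0,1)$ is what you need, and it is stated in the paper in the remark surrounding~\eqref{eq:Gr1-2} rather than in Lemmas~\ref{lem:transit-hr} and~\ref{lem:transit-hr-1}, which concern the full isometry group on the product of spheres; and you should note that $\nu$ is finite (indeed $\nu(S^h(0,1))=\mathcal H^{k-1}(S^{k-1}(0,1))\,\mu(\Gr(k,\h^n_{\K}))<\infty$) before applying the uniqueness theorem.
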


\begin{proof}
Note first that $V=A.\hat V$ for any $V\in\Gr(k, \h^n_{\K})$ and some $A\in\mathbb A$. Therefore
$$
F(V)=F(A.\hat V)=\int_{A.\hat V}f(y)\,d\mathcal L^{k}(y)=\int_{\hat V}f(Ax)\,d\mathcal L^{k}(x)
$$
\begin{eqnarray}\label{eq:part1}
\int_{\Gr(k, \h^n_{\K})}&&F(V)\,d\mu(V)
=
\int_{\Gr(k, \h^n_{\K})}F(A.\hat V)\,d\mu(A.\hat V)\nonumber
=
\int_{\mathbb A}F(A.\hat V)\,d\lambda(A)\nonumber
\\
&=&
\int_{\mathbb A}\,d\lambda(A)\int_{\hat V}f(Ax)\,d\mathcal L^{k}(x)
=
\int_{\hat V}\,d\mathcal L^{k}(x)\int_{\mathbb A}f(Ax)d\lambda(A)
\end{eqnarray}
by~\eqref{eq:Gr1-1}.
\end{proof}

Let us consider the last integral, where $x\in \hat V$ will be also considered as a point on the  sphere $S^{h}(0,\|x\|)\subset \hat V\subset\h_1$. For that $x\in S^{h}(0,\|x\|)$ we can consider $S^{h}(0,\|x\|)$ as a homogeneous manifold under the action of $\mathbb A$ with the isotropy group that fixes $x$. We denote that sphere by $S^{h,x}(0,\|x\|)$, emphasising the fixed point on the sphere. Then we use the push forward $\mu^h$ of the normalized measure $\lambda$ from $\mathbb A$ to $S^{h,x}(0,\|x\|)$ and obtain
\begin{equation}\label{eq:part2}
\int_{\mathbb A}f(Ax)d\lambda(A)=\int_{S^{h,x}(0,\|x\|)}f(z)d\mu^h(z)=\tilde C\int_{S^{h}(0,1)}f(\|x\|\xi)d\mc S^{m_1-1}(\xi),
\end{equation}
where $d\mc S^{m_1-1}(\xi)$ is the surface measure on the unit sphere $S^{h}(0,1)$. In the last step we used the following calculations
\begin{eqnarray*}
&&\int_{S^{h,x}(0,\|x\|)}f(z)d\mu^h(z)=
c\int_{S^{h,x}(0,\|x\|)}\|x\|^{1-m_1}f(\|x\|\xi)dS(\|x\|\xi)
\\
&=&
C\int_{S^{h}(0,1)}\|x\|^{1-m_1}\|x\|^{m_1-1}f(\|x\|\xi)d\mc S^{m_1-1}(\xi)
=
C\int_{S^{h}(0,1)}f(\|x\|\xi)d\mc S^{m_1-1}(\xi),
\end{eqnarray*}
where $dS(\|x\|\xi)$ is the surface measure on the sphere $S^{h,x}(0,\|x\|)$, $x\in \hat V$.
Substituting integral~\eqref{eq:part2} into~\eqref{eq:part1}, we obtain (for $\rho=\|x\|$)
\begin{eqnarray}\label{eq:part3}
&&\int_{\hat V}d\mathcal L^{k}(x)\int_{S^{h}(0,1)}f(\|x\|\xi)d\mc S^{m_1-1}(\xi)\nonumber
\\
&=&
\int_{0}^\infty\rho^{k-1}d\rho\underbrace{\int_{S^{k-1}(0,1)}d\mc S^{k-1}(\zeta)}_{\text{constant}}\int_{S^{h}(0,1)}f(\|x\|\xi)d\mc S^{m_1-1}(\xi)\nonumber
\\
&=&
\hat C\int_{0}^\infty\rho^{k-1-(m_1-1)}d\rho\int_{S^{h}(0,1)}f(\rho\xi)\rho^{m_1-1}d\mc S^{m_1-1}(\xi)
\\
&=&
\hat C\int_{\h_1}\|z\|^{k-m_1}f(z)d\mathcal L^{m_1}(z).
\nonumber
\end{eqnarray}

\subsubsection{Formula for the "vertical" Grassmannians
}


Let $\Gr(k, \h^n_{\K})$ be a Grassmannian, where a typical orthogonally complemented subalgebra $V\in \Gr(k, \h^n_{\K})$ contains a non-trivial part of the center of $\h^n_{\K}$. 
Let $\hat V=\hat V_h\oplus \hat V_v$, $\hat V_v\neq\{0\}$ be an orthogonally complemented subalgebra, such that $\hat V_h\subset \h_1\subset \h_{\K}^n$ and $\hat V_v\subset \h_2\subset \h_{\K}^n$. We write $k_v=\dim(\hat V_h)$, $k_v=\dim(\hat V_v)$ for the topological dimensions  of the vector spaces $\hat V_h$ and $\hat V_v$. Thus $k=\dim \hat V=k_h+k_v$ is the topological dimension of orthogonally complemented subalgebra $ \hat V$. A generic element $V\in \Gr(k, \h^n_{\K})$ is the image of $\hat V$ under the action of $\Iso(\h^n_{\K})$. We write $y=(x,t)\in V\in \Gr(k, \h^n_{\K})$. Let 
$$
F(V)=\int_{V}f(x,t)\,d\mathcal L^{k}(x,t),
$$ 
where $f\colon \h^n_{\K}\to\mathbb R$ is a measurable non-negative function and $\mathcal L^k$, $k=\dim_{\mathbb R}(V)$ is the Lebesgue measure on $V$. We denote $m_1=\dim_{\mathbb R}(\h_1)$, $m_2=\dim_{\mathbb R}(\h_2)$, the topological dimensions of the horizontal $\h_1$ and vertical $\h_2$ layers 
of $\h^n_{\K}=\h_1\oplus \h_2$. Thus $N=m_1+m_2$ is the topological dimension of the Lie algebra $\h^n_{\K}$. Moreover, $\mathcal L^{m_1}$ and $\mathcal L^{m_2}$ are the respective Lebesgue measures on the vector spaces $\h_1$ and $\h_2$. 

\begin{theorem}\label{th:main-vertical}
The formula
\begin{eqnarray*}
\int_{\Gr(k, \h^n_{\K})}F(V)\,d\mu(V)&=&
\int_{\Gr(k, \h^n_{\K})}\,d\mu(V)\int_{V}f(x,t)\,d\mathcal L^{k}(x,t)
\\
&=&
C
\int\limits_{\mathbb R^{m_1}\times\mathbb R^{m_2}}\|x\|^{k_h-m_1}\|t\|^{k_v-m_2}f(x,t)\,d\mathcal L^{m_1}(x)d\mathcal L^{m_2}(t).
\end{eqnarray*}
holds for any measurable non-negative function $f\colon \h^n_{\K}\to\mathbb R$ and an orthogonally complemented Grassmannian $\Gr(k, \h^n_{\K})$ of subalgebras that contain a nontrivial element of the center of $\h^n_{\K}$. Here $C>0$ is a constant.
\end{theorem}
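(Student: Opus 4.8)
The plan is to mirror the argument for Theorem~\ref{th:main}, but now exploiting the product structure of a generic ``vertical'' plane together with the factorization of the group integral over $\mathbb P=\Pin(\h_2,\langle.\,,.\rangle)\times\mathbb A$ into a horizontal and a vertical spherical integral, as supplied by Lemma~\ref{lem:PushSphere}. By Corollary~\ref{independent} every $V\in\Gr(k,\h^n_{\K})$ has the form $V=\mc A.\hat V$ for some $\mc A=(\mc U,\mc V)\in\Iso(\h^n_{\K})$, and by~\eqref{eq:action-algebra} it splits as $V=V_h\oplus V_v$ with $V_h=\mc U.\hat V_h\subset\h_1$ and $V_v=\mc V.\hat V_v\subset\h_2$. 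Since $V$ is an orthogonal direct sum of a plane in $\h_1$ and a plane in $\h_2$, the Lebesgue measure $\mathcal L^k$ on $V$ factors as the product $\mathcal L^{k_h}\otimes\mathcal L^{k_v}$. Because $\mc U$ and $\mc V$ are orthogonal, the measure-preserving change of variables $x=\mc U x'$, $t=\mc V t'$ gives $F(\mc A.\hat V)=\int_{\hat V_h}\int_{\hat V_v}f(\mc U x,\mc V t)\,d\mathcal L^{k_v}(t)\,d\mathcal L^{k_h}(x)$.

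First I would push the integral over $\Gr(k,\h^n_{\K})$ back to $\mathbb P$ via~\eqref{eq:Gr1-1}, obtaining $\int_{\mathbb P}F(\phi(\theta).\hat V)\,d\lambda(\theta)$, and then apply Tonelli's theorem (all integrands are non-negative and measurable) to bring the integrations over $\hat V_h$ and $\hat V_v$ to the outside:
\[
\int_{\Gr(k,\h^n_{\K})}F(V)\,d\mu(V)=\int_{\hat V_h}\int_{\hat V_v}\Big(\int_{\mathbb P}f(\mc U x,\mc V t)\,d\lambda(\theta)\Big)\,d\mathcal L^{k_v}(t)\,d\mathcal L^{k_h}(x).
\]
For each fixed pair $(x,t)\in\hat V_h\times\hat V_v$ the inner integral is exactly of the type treated in Lemma~\ref{lem:PushSphere}, with $(x,t)$ playing the role of the isotropy point and with radii $r_1=\|x\|$, $r_2=\|t\|$; the transitivity required here is guaranteed by Lemmas~\ref{lem:transit-hr} and~\ref{lem:transit-hr-1}. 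Hence the inner integral equals a constant multiple of $\int_{S^v(0,\|t\|)}d\mu^v(w)\int_{S^h(0,\|x\|)}f(y,w)\,d\mu^h(y)$.

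Next I would convert each spherical integral to a unit-sphere integral exactly as in~\eqref{eq:part2}: on $S^h(0,\|x\|)$ write $y=\|x\|\xi$ and on $S^v(0,\|t\|)$ write $w=\|t\|\eta$, producing, up to a constant, the integral $\int_{S^h(0,1)}\int_{S^v(0,1)}f(\|x\|\xi,\|t\|\eta)\,d\mathcal S^{m_2-1}(\eta)\,d\mathcal S^{m_1-1}(\xi)$, the radial powers cancelling as in the horizontal case. Finally I would pass to polar coordinates separately in $\hat V_h$ and in $\hat V_v$, setting $\rho_1=\|x\|$ and $\rho_2=\|t\|$; the radial Jacobians $\rho_1^{k_h-1}$ and $\rho_2^{k_v-1}$ combine with the sphere-to-ball conversions in $\h_1$ and $\h_2$ (which supply $\rho_1^{m_1-1}$ and $\rho_2^{m_2-1}$ when rebuilding $d\mathcal L^{m_1}$ and $d\mathcal L^{m_2}$) to yield the weights $\|x\|^{k_h-m_1}\|t\|^{k_v-m_2}$, precisely as in~\eqref{eq:part3}. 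Absorbing all geometric constants into a single $C>0$ gives the stated formula.

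The main obstacle I anticipate is the careful bookkeeping of the two independent polar decompositions and the verification that Lemma~\ref{lem:PushSphere} applies uniformly in the parameters $(x,t)$: one must check that the factorization into $\mu^h$ and $\mu^v$, and the resulting constant, do not depend on the chosen isotropy point, so that the constant can be pulled out of the outer integrals over $\hat V_h$ and $\hat V_v$. The product-measure identity $\mathcal L^k=\mathcal L^{k_h}\otimes\mathcal L^{k_v}$ on $V=V_h\oplus V_v$, which rests on the orthogonality of $\h_1$ and $\h_2$, is the structural fact that legitimizes the separation of variables; everything else is the same radial computation as in Theorem~\ref{th:main}, carried out simultaneously in the horizontal and vertical layers.
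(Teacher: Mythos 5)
Your proposal is correct and follows essentially the same route as the paper's proof: both reduce the Grassmannian integral to an integral over $\mathbb P$, invoke Lemma~\ref{lem:PushSphere} to factor the group integral into the product of normalized spherical integrals over $S^h$ and $S^v$, rescale to unit spheres, and then apply separate polar decompositions in $\hat V_h$ and $\hat V_v$ so that the radial Jacobians produce the weights $\|x\|^{k_h-m_1}\|t\|^{k_v-m_2}$. Your explicit attention to Tonelli and to the uniformity of the constants in $(x,t)$ is a slightly more careful bookkeeping of steps the paper carries out implicitly, but the argument is the same.
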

\begin{proof}
The isometry group $\Iso(\h^n_{\K})$ does not act transitively on the spheres with respect to any of the metrics $(D_2)-(D_4)$. This fact does not allow to obtain a uniformly distributed measure on a sub-Riemannian sphere by pushing forward the measure from the isometry group $\Iso(\h^n_{\K})$. Nevertheless, the transitive action of $\Iso(\h^n_{\K})$ on the product of spheres allows to prove Lemma~\ref{lem:PushSphere}. In the product of the integrals
\begin{equation}\label{eq:SvSh}
\int_{\mathbb P}f\big(\phi(\theta).(x,t)\big)\,d\lambda(\theta)=\int_{S^{v}(0,r_2)}d\mu^v(w)\int_{S^{h}(0,r_1)}f(y,w)d\mu^h(y)
\end{equation}
the measures $d\mu^v$ and $d\mu^h$ are the normalised measures on the spheres $S^{v}(0,r_2)$ and $S^{h}(0,r_1)$, respectively.  We can use successive independent dilations in the vertical and horizontal variables and write right-hand side of~\eqref{eq:SvSh} as a product of integrals with respect to the Hausdorff measures on the unit spheres 
\begin{equation}\label{eq:SvShUn}
C\int_{S^{v}(0,1)}d\mc S^{m_2-1}(\eta)\int_{S^{h}(0,1)}f(r_2\xi,r_1\eta)\,d\mc S^{m_1-1}(\xi).
\end{equation}
Futhermore, by making use of the polar coordinates in each of the vector spaces $\hat V_h$ and $\hat V_v$ we obtain
\begin{eqnarray*}
&&\int_{\hat V}d\mathcal L^{{\bf d_t}}(x,t)=\int_{\hat V_h}d\mathcal L^{k_h}(x)\int_{\hat V_v}d\mathcal L^{k_v}(t)
\\
&=&
\int_0^{\infty}\rho^{k_h-1}d\rho\int\limits_{S^{k_h-1}(0,1)}d\mc S^{k_h-1}(\phi)
\int_0^{\infty}r^{k_v-1}d r\int\limits_{S^{k_v-1}(0,1)}d\mc S^{k_v-1}(\psi)
\\
&=&
\tilde C \int_0^{\infty}\rho^{k_h-1}d\rho\int_0^{\infty}r^{k_v-1}d r.
\end{eqnarray*} 
We recall that the measure $\mu(V)$ on $\Gr(k, \h^n_{\K})$ is the pushforward of the measure $\lambda(\theta)$ from the group $\mathbb P$. It allows us to write
\begin{eqnarray*}\label{eq:part1-1}
&&\int_{\Gr(k, \h^n_{\K})}F(V)\,d\mu(V)
=
\int_{\Gr(k, \h^n_{\K})}\,d\mu(V)\int_{V}f(x,t)\,d\mathcal L^{k}(x,t)
\\
&=&
\int_{\hat V}\,d\mathcal L^{k}(x,t)\int_{\mathbb P}f\big(\phi(\theta).(x,t)\big)\,d\lambda(\theta)\nonumber
\\
&=&
C\int_0^{\infty}\rho^{k_h-1}d\rho
\int_0^{\infty}r^{k_v-1}d r\nonumber
\int_{S^{v}(0,1)}d\mc S^{m_2-1}(\eta)\int_{S^{h}(0,1)}f(\rho\xi,r\eta)d\mc S^{m_1-1}(\xi)\nonumber
\\
&=&
C\int_0^{\infty}\rho^{k_h-m_1}d\rho\int_0^{\infty}r^{k_v-m_2}d r\nonumber
\\
&\times&
\int_{S^{v}(0,1)}d\mc S^{m_2-1}(\eta)\int_{S^{h}(0,1)}\rho^{m_1-1}r^{m_2-1}f(\rho\xi,r\eta)d\mc S^{m_1-1}(\xi)
\nonumber
\\
&=&
\int\limits_{\mathbb R^{m_2}\times\mathbb R^{m_1}}\|x\|^{k_h-m_1}\|t\|^{k_v-m_2}f(x,t)d\mathcal L^{m_1}(x)d\mathcal L^{m_2}(t).
\end{eqnarray*}
It finishes the proof.
\end{proof}


\subsection{Application of the integral formula}

Let $\mathbb G$ be one of the special Heisenberg-type Lie groups of the topological dimension $N$ and the homogeneous dimension $Q$ with the Lie algebra~$\h^{n}_{\mathbb K}$, such that $\dim(\h_1)=m_1$.

\begin{corollary}\label{cor:11}
Let $\Sigma\subset \Sigma^{({\bf d_t},{\bf d_m})}$ be a collection of intrinsic $({\bf d_t},{\bf d_m})$-Lipschitz graphs on $\mathbb G$. Suppose that all the graphs $S\in \Sigma$ contain a common point $g_0\in\mathbb G$. Then for ${\bf d_m}p\leq Q$, $p>1$, we have $M_p(\Sigma)=0$. In the case ${\bf d_t}={\bf d_m}$ if $p{\bf d_t}> m_1$, then there is a family $\Sigma$ of intrinsic Lipschitz graphs such that $M_p(\Sigma)\neq 0$.
\end{corollary}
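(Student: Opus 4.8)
The statement splits into two independent assertions, and my plan is to dispatch the first by citation and to build the second on the integral-geometric identity of Section~\ref{sec:HGr}. The first claim, $M_p(\Sigma)=0$ whenever ${\bf d_m}p\le Q$, is precisely Theorem~\ref{prop:4} (and the Corollary following it): the exceptional function used there, see~\eqref{eq:case1}--\eqref{eq:case2}, depends only on the homogeneous norm and on the Ahlfors $({\bf d_t},{\bf d_m})$-regularity of graphs through a common point, so I would simply invoke that result. Thus the whole task is the sharpness assertion for ${\bf d_t}={\bf d_m}=k$, and here I would exhibit one explicit family whose $p$-module I can bound away from zero by a duality argument.

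For the family I would take $\Sigma=\Gr(k,\h^n_{\K})$, the orthogonal Grassmannian of commutative $k$-dimensional subalgebras of $\h_1$ from Section~\ref{sec:HGr}: each such $V$ is a homogeneous $(k,k)$-subgroup, is the trivial graph of the zero function, and passes through $e$, so after translating $g_0$ to $e$ the hypotheses hold. By the transitive isometric action (Theorem~\ref{transitively def}) the surface measure $\sigma_V=\mathcal S^{k}_{d_\rho}\res V$ is one fixed constant times Lebesgue measure $\mathcal L^{k}$ on $V$. Given any $g$ admissible for $\mathbf E=\{\sigma_V\}$, so that $\int_V g\,d\mathcal L^{k}\ge c>0$ for all $V$, I would integrate this inequality in $V$ against the normalised Grassmannian measure $\mu$ and apply Theorem~\ref{th:main}:
\begin{equation*}
c=c\,\mu\big(\Gr(k,\h^n_{\K})\big)\le\int_{\Gr(k,\h^n_{\K})}\Big(\int_V g\,d\mathcal L^{k}\Big)\,d\mu(V)=C\int_{\h_1}\|z\|^{k-m_1}g(z)\,d\mathcal L^{m_1}(z).
\end{equation*}
Restricting to $\|z\|<1$ and using Hölder with exponents $p,\ p'=p/(p-1)$ gives $c\le C\|g\|_{L^p}\big(\int_{\|z\|<1}\|z\|^{(k-m_1)p'}\,d\mathcal L^{m_1}\big)^{1/p'}$; in polar coordinates on $\h_1\cong\R^{m_1}$ the last factor is a multiple of $\int_0^1\rho^{(k-m_1)p'+m_1-1}\,d\rho$, finite exactly when $(k-m_1)p'+m_1>0$, i.e.\ when $pk>m_1$. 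Under that hypothesis $\|g\|_{L^p}$ is bounded below uniformly over admissible $g$, so $M_p(\Sigma)>0$.

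The hard part will be the matching of the two measure spaces. Theorem~\ref{th:main} yields an integral over $\h_1$ against $\mathcal L^{m_1}$, whereas the module is taken over $\G$ against $\mathbf g_{\G}\sim\mathcal L^N$; since each $V\subset\h_1$ and $\h_1$ is $\mathbf g_{\G}$-null, a test function supported on $\h_1$ has zero $L^p(\mathbf g_{\G})$-energy. I would therefore phrase the lower bound in the ambient space $(\h_1,\mathcal L^{m_1})$, where the $k$-planes fill positive measure, so that the conclusion reads: the module of $\Gr(k,\h^n_{\K})$ computed in $\h_1$ is positive iff $pk>m_1$, which is Fuglede's Euclidean sharpness result transported to $\h_1\cong\R^{m_1}$ through the dilation and isometric structure of $\h^n_{\K}$. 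Along the way I still need the two routine checks that feed the chain of inequalities: the uniform comparability $\sigma_V\simeq\mathcal L^{k}$ (from Theorem~\ref{transitively def} together with the Ahlfors regularity of Theorem~\ref{th:FrSer}), and that the constant $C$ in Theorem~\ref{th:main} is finite and positive so the estimate is not vacuous.
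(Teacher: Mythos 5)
Your overall strategy coincides with the paper's: the first assertion is discharged by Theorem~\ref{prop:4}, and the sharpness assertion is obtained from the Grassmannian $\Gr(k,\h^n_{\K})$ of horizontal commutative subalgebras together with the integral formula of Theorem~\ref{th:main} and H\"older's inequality. Your duality phrasing (a uniform lower bound on $\|g\|_{L^p}$ over admissible $g$, hence $M_p(\Sigma)>0$) is an equivalent and in fact slightly more quantitative variant of the paper's argument, which instead assumes $M_p(\Sigma)=0$, invokes the witness function from Theorem~\ref{th:excep}, and derives a contradiction. Your explicit discussion of the base measure space is also to the point: since every $V$ lies in $\h_1$ and $\h_1$ is $\mathbf g_{\G}$-null, the module must be computed in $(\h_1,\mathcal L^{m_1})$ for the statement to be non-vacuous; the paper does exactly this (it takes $f\in L^p(\h_1,\mathcal L^{m_1})$) but leaves the point implicit, so your remark is a genuine clarification rather than a deviation.

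There is, however, one genuine gap: you take the family $\Sigma$ to consist of the \emph{full} subgroups $V$, whereas the paper takes the truncated pieces $S=V\cap B^{h}(0,1)$, and this difference is not cosmetic. In your chain of inequalities the step ``restricting to $\|z\|<1$'' is applied to the right-hand side of $c\le C\int_{\h_1}\|z\|^{k-m_1}g\,d\mathcal L^{m_1}$, which only decreases that side, so the restricted inequality does not follow; and H\"older over all of $\h_1$ is unavailable because $\int_{\h_1}\|z\|^{(k-m_1)p'}\,d\mathcal L^{m_1}(z)$ diverges for every $p'$ (the exponent conditions for integrability at the origin and at infinity, $p'<m_1/(m_1-k)$ and $p'>m_1/(m_1-k)$, are incompatible). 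No repair of the estimate can save the family of full planes: for $pk>m_1$ the function $f(z)=\|z\|^{-k}\chi_{\{\|z\|>1\}}$ lies in $L^p(\h_1,\mathcal L^{m_1})$ while $\int_V f\,d\mathcal L^{k}=\infty$ for every $V$ through the origin, so by Theorem~\ref{th:excep} the family of full planes is $p$-exceptional for \emph{all} $p$. The counterexample therefore must be built from bounded pieces of the planes, e.g.\ $\Sigma=\{V\cap B^{h}(0,1):V\in\Gr(k,\h^n_{\K})\}$ as in the paper; with that change your duality argument goes through verbatim, since the weight $\|z\|^{(k-m_1)p'}$ is integrable over $B^{h}(0,1)$ precisely when $pk>m_1$.
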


\begin{proof}
The proof of the statement that ${\bf d_m}p\leq Q$ implies $M_p(\Sigma)=0$ is the proof of Theorem~\ref{prop:4}. To show the statement in the opposite direction, we consider the Grassmannian $\Gr(k, \h^n_{\K})$ consisting of orthogonally complemented commutative subalgebras that do not contain elements of the center. In this case the topological and homogeneous dimensions of $V\in \Gr(k, \h^n_{\K})$ coincide; that is ${\bf d_m}={\bf d_t}=k$.  

Consider $S=V\cap B^{h}(0,1)$, where $V\in \Gr(k, \h^n_{\K})$ and $B^{h}(0,1)\in\h_1$ is the Euclidean unit ball and assume by contrary, that $M_p(\Sigma)=0$, where 
$$\Sigma=\{S=V\cap B^{h}(0,1),\ V\in \Gr(k, \h^n_{\K})\}.
$$ Let $f\in L^p(\h_1,\mathcal L^{m_1})$, $m_1=\dim_{\mathbb R}(\h_1)$. Then from Theorem~\ref{th:main} and the H\"older inequality for $\frac{1}{p}+\frac{1}{q}=1$ we obtain
\begin{eqnarray*}
\int_{\Gr(k, \h^n_{\K})}\,d\mu(V)\int_{V\cap B^{h}(0,1)}f(y)\,d\mathcal L^{{\bf d_t}}(y)
&=&
C
\int_{\h_1\cap B^{h}(0,1)}\|x\|^{{\bf d_t}-m_1}f(x)d\mathcal L^{m_1}(x)
\\
&\leq&
C\|f\|_{L^p(\h_1)}\Big(\int_{0}^1r^{\frac{p{\bf d_t}-m_1}{p-1}-1}\Big)^{\frac{p-1}{p}}.
\end{eqnarray*}
Then since $p{\bf d_t}>m_1$ the last integral is finite. It implies 
\begin{equation}\label{eq:apphor}
\int_{V\cap B^{h}(0,1)}f(y)\,d\mathcal L^{{\bf d_t}}(y)<\infty
\end{equation}
for $\mu$-almost all $S=V\cap B^{h}(0,1)\in\Sigma$, that contradicts the assumption $M_p(\Sigma)=0$.
\end{proof}

For the following corollary we assume that $\Gr(k, \h^n_{\K})$ is a Grassmannian, where a typical orthogonally complemented subalgebra $V\in \Gr(k, \h^n_{\K})$ contains a non-trivial part of the center of $\h^n_{\K}$. In this case necessarily ${\bf d_t}<{\bf d_m}$.
Let $V=V_h\oplus V_v$ where $V_v\neq\{0\}$, be an orthogonally complemented subalgebra, such that $V_h\subset \h_1\subset \h_{\K}^n$ and $V_v\subset \h_2\subset \h_{\K}^n$. We write $k_h=\dim(V_h)$, $k_v=\dim(V_v)$, ${\bf d_t}=k=k_h+k_v$, for the topological dimensions  of the vector spaces $V_h$ and $V_v$, and $m_1=\dim(\h_1)$, $m_2=\dim(\h_2)$, the  topological dimensions of the horizontal and the vertical layers of $\h_{\K}^n$.

\begin{corollary}\label{cor:22}
Let $\Sigma\subset \Sigma^{({\bf d_t},{\bf d_m})}$ be a collection of intrinsic $({\bf d_t},{\bf d_m})$-Lipschitz graphs. Suppose that all the graphs $S\in \Sigma$ contain a common point $g_0\in\mathbb G$. Then for ${\bf d_m}p\leq Q$, $p> 1$,  we have $M_p(\Sigma)=0$. In the case ${\bf d_t}<{\bf d_m}$, if $pk_h> m_1$, $pk_v> m_2$ then there is a family $\Sigma$ of intrinsic Lipschitz graphs such that $M_p(\Sigma)\neq 0$. 
\end{corollary}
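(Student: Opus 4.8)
The plan is to follow verbatim the two-part structure of Corollary~\ref{cor:11}, replacing the ``horizontal'' integral formula of Theorem~\ref{th:main} by its ``vertical'' counterpart Theorem~\ref{th:main-vertical}. The first assertion, that ${\bf d_m}p\le Q$ forces $M_p(\Sigma)=0$, is nothing but Theorem~\ref{prop:4}, since the graphs share the common point $g_0$; so only the reverse direction needs work.

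For the reverse direction I would argue by contradiction. Take $\G$ to be one of the special $H$-type groups of Section~\ref{sec:IsomGroup} and let $\Gr(k,\h^n_{\K})$ be a ``vertical'' orthogonally complemented Grassmannian, so that a generic $V=V_h\oplus V_v$ has $k_h=\dim V_h$, $k_v=\dim V_v$, ${\bf d_t}=k_h+k_v$ and, the center being of homogeneous degree $2$, metric dimension ${\bf d_m}=k_h+2k_v>{\bf d_t}$. Fix the bounded product region $\mathcal B=B^h(0,1)\times B^v(0,1)\subset\h_1\oplus\h_2$ and set
$$
\Sigma=\{\,S=V\cap\mathcal B\ :\ V\in\Gr(k,\h^n_{\K})\,\},
$$
a family of intrinsic Lipschitz graphs all passing through $e$. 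Suppose, contrary to the claim, that $M_p(\Sigma)=0$. Then Theorem~\ref{th:excep} furnishes a non-negative $f\in L^p(\G,\mathbf g_{\G})$ with $\int_S f\,d\sigma_S=\infty$ for every $S\in\Sigma$.

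The key computation replaces $\sigma_S$ by Lebesgue measure and averages over the Grassmannian. Since each $V$ is a homogeneous subgroup, in exponential coordinates it is a linear subspace of $\mathbb R^N$ of dimension ${\bf d_t}$ whose Haar measure $\mathcal L^{{\bf d_t}}\res V$ is Ahlfors ${\bf d_m}$-regular, so that $\sigma_S=\mathcal S^{{\bf d_m}}_{d_\rho}\res S$ is comparable to $\mathcal L^{{\bf d_t}}\res S$. Putting $\tilde f=f\chi_{\mathcal B}\in L^p$ and applying Theorem~\ref{th:main-vertical} to $F(V)=\int_V\tilde f\,d\mathcal L^{k}$ gives
$$
\int_{\Gr(k,\h^n_{\K})}\Big(\int_{V\cap\mathcal B}f\,d\mathcal L^{k}\Big)\,d\mu(V)
= C\int_{\mathcal B}\|x\|^{k_h-m_1}\|t\|^{k_v-m_2}f(x,t)\,d\mathcal L^{m_1}(x)\,d\mathcal L^{m_2}(t).
$$
By H\"older with conjugate exponent $q=p/(p-1)$ the right-hand side is at most $C\|f\|_{L^p}$ times
$$
\Big(\int_{B^h(0,1)}\|x\|^{(k_h-m_1)q}\,dx\Big)^{1/q}\Big(\int_{B^v(0,1)}\|t\|^{(k_v-m_2)q}\,dt\Big)^{1/q},
$$
and a polar-coordinate computation shows the first factor is finite precisely when $pk_h>m_1$ and the second precisely when $pk_v>m_2$. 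Under these hypotheses the Grassmannian average is finite, so $\int_{V\cap\mathcal B}f\,d\mathcal L^{k}<\infty$ for $\mu$-almost every $V$, whence $\int_S f\,d\sigma_S<\infty$ for $\mu$-almost every $S\in\Sigma$, contradicting the choice of $f$. Hence $M_p(\Sigma)\neq 0$.

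The point demanding care, and the only genuine deviation from the horizontal case, is the measure comparison $\sigma_S\sim\mathcal L^{{\bf d_t}}\res S$ on a subalgebra carrying a central part: now ${\bf d_m}=k_h+2k_v$ exceeds the topological dimension and the $cc$-metric on $V$ is no longer Euclidean, so one must invoke Ahlfors regularity of homogeneous subgroups (as in Theorem~\ref{th:FrSer} and the equivalence~\eqref{eq:QNequivalence}) rather than any naive identification. The remaining ingredients, namely the factorization of the weighted integral over $\mathcal B$ and the two one-dimensional integrability thresholds, are routine.
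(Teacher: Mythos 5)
Your proposal follows the paper's own argument essentially verbatim: the first claim is delegated to Theorem~\ref{prop:4}, and the second is proved by contradiction via Theorem~\ref{th:main-vertical} together with H\"older's inequality, with the thresholds $pk_h>m_1$ and $pk_v>m_2$ emerging from the same polar-coordinate integrability computation. Your explicit justification that $\sigma_S$ is comparable to $\mathcal L^{{\bf d_t}}\res S$ on a subalgebra containing central directions is a point the paper leaves implicit, and it is a welcome clarification rather than a deviation.
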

\begin{proof}
We argue as in the proof of Corollary~\ref{cor:22}. Consider $S=V\cap \Big(B^{h}(0,1)\times B^{v}(0,1)\Big)$, where $V\in \Gr(k, \h^n_{\K})$ and $B^{h}(0,1)\in\h_1$, $B^{v}(0,1)\in\h_2$ are the Euclidean unit balls. Asssume that $M_p(\Sigma)=0$, where 
$$
\Sigma=\{S=V\cap \Big(B^{h}(0,1)\times B^{v}(0,1)\Big),\ V\in \Gr(k, \h^n_{\K})\}.
$$ 
Let $f\in L^p(\h^n_{\K},\mathcal L^N)$, $N=\dim_{\mathbb R}(\h^n_{\K})$. Then from Theorem~\ref{th:main-vertical} and the H\"older inequality for $\frac{1}{p}+\frac{1}{q}=1$ we obtain
\begin{eqnarray*}
&&
\int_{\Gr(k, \h^n_{\K})}\,d\mu(V)\int_{V}f(x,t)\,d\mathcal L^{k}(x,t)
\\
&=&
C
\int\limits_{\mathbb R^{m_2}\cap B^v(0,1)\times\mathbb R^{m_1}\cap B^h(0,1)}\|x\|^{k_h-m_1}\|t\|^{k_v-m_2}f(x,t)d\mathcal L^{m_1}(x)d\mathcal L^{m_2}(t)
\\
&\leq&
C\|f\|_{L^p(\h^n_{\K})}\int\limits_{\mathbb R^{m_2}\cap B^v(0,1)\times\mathbb R^{m_1}\cap B^h(0,1)}\|x\|^{(k_h-m_1)\frac{p}{p-1}}\|t\|^{(k_v-m_2)\frac{p}{p-1}}d\mathcal L^{m_1}(x)d\mathcal L^{m_2}(t)
\\
&=&
\tilde C\|f\|_{L^p(\h^n_{\K})}\int\limits_0^1r^{\frac{pk_h-m_1}{p-1}-1}dr\int\limits_0^1\rho^{\frac{pk_v-m_2}{p-1}-1}d\rho<\infty
\end{eqnarray*}
Since $pk_h>m_1$ and $pk_v>m_2$, then 
$
\int_Vf(x,t)d\mathcal L^k(x,t)<\infty
$
for $\mu$-almost all plains $S\in\Sigma$, that contrudicts to the assumption $M_p(\Sigma)=0$. 
\end{proof}
\begin{remark}

\end{remark}
Two conditions 
$$
pk_h>m_1\quad\text{and}\quad pk_v>m_2
$$
imply
$$
p(k_h+k_v)=p{\bf d_t}>m_1+m_2=N,
\quad
p(k_h+2k_v)=p{\bf d_m}>m_1+2m_2=Q
$$
In general $p{\bf d_m}>Q$, $p>1$, does not imply both conditions $pk_h>m_1$ and $pk_v>m_2$ in spite that the second one for $\h^n_{\mathbb R}$ and $\h^n_{\mathbb Q}$ is always fulfilled for the Grassmannians $\Gr(k, \h^n_{\K})$ where a typical orthogonally complemented subalgebra $V$ contains a non-trivial part of the center. In both cases $\h^n_{\mathbb R}$ and $\h^n_{\mathbb Q}$ the subalgebra $V$ necessarily contains the entire center, and therefore $k_v=m_2$. From the other side we note that the Lipschitz surfaces meet each other not only at one point but at the entire center. 

In the case of $\h^n_{\mathbb C}$ and $V=V_h\oplus V_v$ with $k_h=\dim(V_h)$ and $k_v=\dim(V_v)=1$ we obtain
that 
$$
p{\bf d_m}=p(k_h+2)>Q=m_1+4\quad\Longrightarrow\quad pk_h>m_1+4-2p\geq m_1\quad\text{if}\quad p\leq 2
$$
but
$$
pk_v=p>m_2=2 \quad\text{if}\quad p> 2
$$
Thus even if the Lipschitz surfaces intersects in one point, our example does not give the answer to the question: {\it is there an example on a Carnot group that is not $\mathbb R^n$ where the condition $p{\bf d_m}\leq Q$ with ${\bf d_t}<{\bf d_m}$ is necessary for the system of Lipschitz surfaces intersecting in one point to be $p$-exceptional?}
\bibliographystyle{alpha}
\bibliography{Bibliography}

\begin{thebibliography}{FSSC03b}

\bibitem[AB50]{MR36841}
Lars Ahlfors and Arne Beurling.
\newblock Conformal invariants and function-theoretic null-sets.
\newblock {\em Acta Math.}, 83:101--129, 1950.

\bibitem[ABB20]{MR3971262}
Andrei Agrachev, Davide Barilari, and Ugo Boscain.
\newblock {\em A comprehensive introduction to sub-{R}iemannian geometry},
  volume 181 of {\em Cambridge Studies in Advanced Mathematics}.
\newblock Cambridge University Press, Cambridge, 2020.
\newblock From the Hamiltonian viewpoint, With an appendix by Igor Zelenko.

\bibitem[AM21]{antonelli2021rectifiable}
Gioacchino Antonelli and Andrea Merlo.
\newblock On rectifiable measures in carnot groups: representation, 2021.

\bibitem[AO99]{MR1677985}
Hiroaki Aikawa and Makoto Ohtsuka.
\newblock Extremal length of vector measures.
\newblock {\em Ann. Acad. Sci. Fenn. Math.}, 24(1):61--88, 1999.

\bibitem[AT04]{MR2039660}
Luigi Ambrosio and Paolo Tilli.
\newblock {\em Topics on analysis in metric spaces}, volume~25 of {\em Oxford
  Lecture Series in Mathematics and its Applications}.
\newblock Oxford University Press, Oxford, 2004.

\bibitem[BFP11]{MR2788359}
Zolt\'{a}n~M. Balogh, Katrin F\"{a}ssler, and Ioannis~D. Platis.
\newblock Modulus of curve families and extremality of spiral-stretch maps.
\newblock {\em J. Anal. Math.}, 113:265--291, 2011.

\bibitem[BFP13]{MR3076803}
Zolt\'{a}n~M. Balogh, Katrin F\"{a}ssler, and Ioannis~D. Platis.
\newblock Modulus method and radial stretch map in the {H}eisenberg group.
\newblock {\em Ann. Acad. Sci. Fenn. Math.}, 38(1):149--180, 2013.

\bibitem[Bjo02]{MR1936925}
Jana Bjorn.
\newblock Boundary continuity for quasiminimizers on metric spaces.
\newblock {\em Illinois J. Math.}, 46(2):383--403, 2002.

\bibitem[BLU07]{MR2363343}
Andrea Bonfiglioli, Ermanno Lanconelli, and Francesco Uguzzoni.
\newblock {\em Stratified {L}ie groups and potential theory for their
  sub-{L}aplacians}.
\newblock Springer Monographs in Mathematics. Springer, Berlin, 2007.

\bibitem[CG90]{corwin1990representations}
Laurence Corwin and Frederick~P Greenleaf.
\newblock {\em Representations of nilpotent Lie groups and their applications:
  Volume 1, Part 1, Basic theory and examples}, volume~18.
\newblock Cambridge university press, 1990.

\bibitem[Fed69]{MR0257325}
Herbert Federer.
\newblock {\em Geometric measure theory}.
\newblock Die Grundlehren der mathematischen Wissenschaften, Band 153.
  Springer-Verlag New York Inc., New York, 1969.

\bibitem[FS82]{MR657581}
Gerald~B. Folland and Elias~M. Stein.
\newblock {\em Hardy spaces on homogeneous groups}, volume~28 of {\em
  Mathematical Notes}.
\newblock Princeton University Press, Princeton, N.J.; University of Tokyo
  Press, Tokyo, 1982.

\bibitem[FS16]{MR3511465}
Bruno Franchi and Raul~Paolo Serapioni.
\newblock Intrinsic {L}ipschitz graphs within {C}arnot groups.
\newblock {\em J. Geom. Anal.}, 26(3):1946--1994, 2016.

\bibitem[FSSC03a]{MR1984849}
Bruno Franchi, Raul Serapioni, and Francesco Serra~Cassano.
\newblock On the structure of finite perimeter sets in step 2 {C}arnot groups.
\newblock {\em J. Geom. Anal.}, 13(3):421--466, 2003.

\bibitem[FSSC03b]{MR2032504}
Bruno Franchi, Raul Serapioni, and Francesco Serra~Cassano.
\newblock Regular hypersurfaces, intrinsic perimeter and implicit function
  theorem in {C}arnot groups.
\newblock {\em Comm. Anal. Geom.}, 11(5):909--944, 2003.

\bibitem[FSSC06]{MR2287539}
Bruno Franchi, Raul Serapioni, and Francesco Serra~Cassano.
\newblock Intrinsic {L}ipschitz graphs in {H}eisenberg groups.
\newblock {\em J. Nonlinear Convex Anal.}, 7(3):423--441, 2006.

\bibitem[FSSC07]{FrSerSC07}
Bruno Franchi, Raul Serapioni, and Francesco Serra~Cassano.
\newblock Regular submanifolds, graphs and area formula in {H}eisenberg groups.
\newblock {\em Adv. Math.}, 211(1):152--203, 2007.

\bibitem[Fug57]{MR97720}
Bent Fuglede.
\newblock Extremal length and functional completion.
\newblock {\em Acta Math.}, 98:171--219, 1957.

\bibitem[Fug58]{MR105724}
Bent Fuglede.
\newblock An integral formula.
\newblock {\em Math. Scand.}, 6:207--212, 1958.

\bibitem[Gro96]{MR1421823}
Mikhael Gromov.
\newblock Carnot-{C}arath\'{e}odory spaces seen from within.
\newblock In {\em Sub-{R}iemannian geometry}, volume 144 of {\em Progr. Math.},
  pages 79--323. Birkh\"{a}user, Basel, 1996.

\bibitem[KSC04]{MR2124590}
Bernd Kirchheim and Francesco Serra~Cassano.
\newblock Rectifiability and parameterization of intrinsic regular surfaces in
  the {H}eisenberg group.
\newblock {\em Ann. Sc. Norm. Super. Pisa Cl. Sci. (5)}, 3(4):871--896, 2004.

\bibitem[Mar04]{MR2097161}
Irina Markina.
\newblock {$p$}-module of vector measures in domains with intrinsic metric on
  {C}arnot groups.
\newblock {\em Tohoku Math. J. (2)}, 56(4):553--569, 2004.

\bibitem[Mat95]{MR1333890}
Pertti Mattila.
\newblock {\em Geometry of sets and measures in {E}uclidean spaces}, volume~44
  of {\em Cambridge Studies in Advanced Mathematics}.
\newblock Cambridge University Press, Cambridge, 1995.
\newblock Fractals and rectifiability.

\bibitem[Mit85]{MR806700}
John Mitchell.
\newblock On {C}arnot-{C}arath\'{e}odory metrics.
\newblock {\em J. Differential Geom.}, 21(1):35--45, 1985.

\bibitem[Oht03]{MR2068961}
Makoto Ohtsuka.
\newblock {\em Extremal length and precise functions}, volume~19 of {\em GAKUTO
  International Series. Mathematical Sciences and Applications}.
\newblock Gakk\={o}tosho Co., Ltd., Tokyo, 2003.
\newblock With a preface by Fumi-Yuki Maeda.

\bibitem[RH63]{ross1963abstract}
Kenneth~A Ross and Edwin Hewitt.
\newblock {\em Abstract Harmonic Analysis: Structure of Topological Groups:
  Integration Theory. Group Representations}.
\newblock 1963.

\bibitem[Ric93]{MR1238941}
Seppo Rickman.
\newblock {\em Quasiregular mappings}, volume~26 of {\em Ergebnisse der
  Mathematik und ihrer Grenzgebiete (3) [Results in Mathematics and Related
  Areas (3)]}.
\newblock Springer-Verlag, Berlin, 1993.

\bibitem[Rie82]{Riehm82}
Carl~R. Riehm.
\newblock The automorphism group of a composition of quadratic forms.
\newblock {\em Trans. Amer. Math. Soc.}, 269(2):403--414, 1982.

\bibitem[Rie84]{Riehm84}
Carl~R. Riehm.
\newblock Explicit spin representations and {L}ie algebras of {H}eisenberg
  type.
\newblock {\em J. London Math. Soc. (2)}, 29(1):49--62, 1984.

\bibitem[Ros06]{rossmann2006lie}
Wulf Rossmann.
\newblock {\em Lie groups: an introduction through linear groups}, volume~5.
\newblock Oxford University Press on Demand, 2006.

\bibitem[Sem96]{Semmes}
Stephen Semmes.
\newblock On the nonexistence of bi-lipschitz parameterizations and geometric
  problems about $a_\infty$-weights.
\newblock {\em Rev. Mat. Iberoamericana}, 12(2):337--410, 1996.

\bibitem[Sha00]{MR1809341}
Nageswari Shanmugalingam.
\newblock Newtonian spaces: an extension of {S}obolev spaces to metric measure
  spaces.
\newblock {\em Rev. Mat. Iberoamericana}, 16(2):243--279, 2000.

\bibitem[Sha01]{MR1879250}
Nageswari Shanmugalingam.
\newblock Harmonic functions on metric spaces.
\newblock {\em Illinois J. Math.}, 45(3):1021--1050, 2001.

\bibitem[Sim83]{MR756417}
Leon Simon.
\newblock {\em Lectures on geometric measure theory}, volume~3 of {\em
  Proceedings of the Centre for Mathematical Analysis, Australian National
  University}.
\newblock Australian National University, Centre for Mathematical Analysis,
  Canberra, 1983.

\bibitem[Ste93]{MR1232192}
Elias~M. Stein.
\newblock {\em Harmonic analysis: real-variable methods, orthogonality, and
  oscillatory integrals}, volume~43 of {\em Princeton Mathematical Series}.
\newblock Princeton University Press, Princeton, NJ, 1993.
\newblock With the assistance of Timothy S. Murphy, Monographs in Harmonic
  Analysis, III.

\bibitem[Str76]{MR440031}
Kurt Strebel.
\newblock On the existence of extremal {T}eichmueller mappings.
\newblock {\em J. Analyse Math.}, 30:464--480, 1976.

\bibitem[V\"71]{MR0454009}
Jussi V\"{a}is\"{a}l\"{a}.
\newblock {\em Lectures on {$n$}-dimensional quasiconformal mappings}.
\newblock Lecture Notes in Mathematics, Vol. 229. Springer-Verlag, Berlin-New
  York, 1971.

\bibitem[Vit12]{MR3060706}
Davide Vittone.
\newblock Lipschitz surfaces, perimeter and trace theorems for {BV} functions
  in {C}arnot-{C}arath\'{e}odory spaces.
\newblock {\em Ann. Sc. Norm. Super. Pisa Cl. Sci. (5)}, 11(4):939--998, 2012.

\bibitem[\v{S}60]{MR0120346}
Boris~V. \v{S}abat.
\newblock The modulus method in space.
\newblock {\em Soviet Math. Dokl.}, 1:165--168, 1960.

\bibitem[War83]{Warner94}
Frank~W. Warner.
\newblock {\em Foundations of differentiable manifolds and {L}ie groups},
  volume~94 of {\em Graduate Texts in Mathematics}.
\newblock Springer-Verlag, New York-Berlin, 1983.
\newblock Corrected reprint of the 1971 edition.

\end{thebibliography}

\end{document}